\newtheorem{theorem}{Theorem}[section]
\newtheorem{thm}[theorem]{Theorem}
\newtheorem{cor}[theorem]{Corollary}
\newtheorem{prop}[theorem]{Proposition}
\newtheorem{lemma}[theorem]{Lemma}
\theoremstyle{definition}
\newtheorem{question}[theorem]{Question}
\newtheorem{remark}[theorem]{Remark}
\numberwithin{equation}{theorem}
\newenvironment{sproof}{%
  \proof}{\endproof}
\DeclareMathAlphabet{\mathpzc}{OT1}{pzc}{m}{it}
\newcommand{\bigperp}{%
  \mathop{\mathpalette\bigp@rp\relax}%
  \displaylimits
}
\newcommand{\bigp@rp}[2]{%
  \vcenter{
    \m@th\hbox{\scalebox{\ifx#1\displaystyle2.1\else1.5\fi}{$#1\perp$}}
  }%
}
\begin{document}

\newcommand{\str}{{\mathtt{str}}}
\newcommand{\Trd}{{\mathrm{Trd}}}
\newcommand{\rad}{{\mathrm{rad}}}
\newcommand{\id}{{\mathrm{id}}}
\newcommand{\Ad}{{\mathrm{Ad}}}
\newcommand{\Ker}{{\mathrm{Ker}}}
\newcommand{\wedges}[1]{\d b_1\wedge\ldots\wedge \d b_{#1}}
\newcommand{\rank}{{\mathrm{rank}}}
\renewcommand{\dim}{{\mathrm{dim}}}
\newcommand{\coker}{{\mathrm{Coker}}}
\newcommand{\can}{\overline{\rule{2.5mm}{0mm}\rule{0mm}{4pt}}}
\newcommand{\End}{{\mathrm{End}}}
\newcommand{\Sand}{{\mathrm{Sand}}}
\newcommand{\Hom}{{\mathrm{Hom}}}
\newcommand{\Nrd}{{\mathrm{Nrd}}}
\newcommand{\Alt}{{\mathrm{Alt}}}

\newcommand{\Srd}{{\mathrm{Srd}}}
\newcommand{\ad}{{\mathrm{ad}}}
\newcommand{\rk}{{\mathrm{rk}}}
\newcommand{\Mon}{{\mathrm{Mon}}}
\newcommand{\disc}{{\mathrm{disc}}}
\newcommand{\Sym}{{\mathrm{Sym}}}
\newcommand{\Skew}{{\mathrm{Skew}}}
\newcommand{\Nrp}{{\mathrm{Nrp}}}
\newcommand{\Trp}{{\mathrm{Trp}}}
\newcommand{\Symd}{{\mathrm{Symd}}}
\renewcommand{\dir}{{\mathrm{dir}}}
\renewcommand{\geq}{\geqslant}
\renewcommand{\leq}{\leqslant}
\newcommand{\an}{{\mathrm{an}}}
\renewcommand{\Im}{{\mathrm{Im}}}
\newcommand{\Int}{{\mathrm{Int}}}
\renewcommand{\d}{{\mathrm{d}}}
\newcommand{\coind}{{\mathrm{coind}}}

\newcommand{\qp}[2]{\mbox{$#1 \otimes^\mathrm{qp}#2 $}}
\newcommand{\qf}[1]{\mbox{$\langle #1\rangle $}}
\newcommand{\pff}[1]{\mbox{$\langle\!\langle #1
\rangle\!\rangle $}}
\newcommand{\pfr}[1]{\mbox{$\langle\!\langle #1 ]]$}}
\newcommand{\HH}{{\mathbb H}}
\newcommand{\s}{{\sigma}}
\newcommand{\lra}{{\longrightarrow}}
\newcommand{\ZZ}{{\mathbb Z}}
\newcommand{\nat}{{\mathbb N}}
\newcommand{\FF}{{\mathbb F}}
\newcommand{\PERP}{\mbox{\raisebox{-.5ex}{{\Huge $\perp$}}}}
\newcommand{\Perp}{\mbox{\raisebox{-.2ex}{{\Large $\perp$}}}}
\newcommand{\M}[1]{\mathbb{M}( #1)}

\newcommand{\vf}{\varphi}
\newcommand{\mg}[1]{#1^{\times}}

\renewcommand{\thefootnote}{\fnsymbol{footnote}}

\title{Generalised quadratic forms and the  {\MakeLowercase{\emph{u}}}--invariant}
\author{Andrew Dolphin} \date{}
\address{Departement Wiskunde--Informatica, Universiteit Antwerpen, Belgium/\newline 
Department of Mathematics, Ghent University, Belgium
}
\email{Andrew.Dolphin@uantwerpen.be} 
\thanks{2010 \emph{Mathematics Subject Classification}. 11E04, 11E39, 11E81, 12F05, 19G12. \emph{Key words and phrases.} Central simple algebras, involutions, generalised quadratic forms, hermitian forms, $u$-invariant, characteristic two, quaternion algebras. }

\begin{abstract} 
The $u$-invariant of a field is  the supremum of the dimensions of anisotropic quadratic forms over the field. We define  corresponding  $u$-invariants for hermitian and generalised quadratic forms over a division algebra with involution in characteristic $2$ and investigate the relationships between them. 
%We show these invariants are related to the coindices of certain algebras with involution or quadratic pair respectively. 
We  investigate  these invariants in particular in the case of a quaternion algebra  and futher when this quaternion algebra is the unique   quaternion division algebra over a field. 
 \end{abstract}

\maketitle

\section{Introduction}

The $u$-invariant is a classical invariant of a field. For a field $F$, the invariant $u(F)$ is defined as the supremum of the dimensions of anisotropic quadratic forms  over $F$. The $u$-invariant  is a much studied topic, and results include the celebrated  proof of Merkurjev in \cite{Merkurjev:evenuinv} that for every even number there exists a field with $u$-invariant of that value,  disproving a long standing conjecture of Kaplansky that the $u$-invariant must be a $2$-power or infinite.  See \cite{Hoffmann:usurvy} for a survey of this and related questions.

The $u$-invariants of fields of characteristic $2$ have also been studied. It was noted in   \cite{Baeza:uinv2} that is is convenient to  consider two different $u$-invariants,  one related to anisotropic quadratic forms in general, and one related to the anisotropy of nonsingular  quadratic forms. In characteristic different from $2$ these invariants are equal, as any singular quadratic forms over such fields are isotropic. In characteristic $2$, however, they can hold very different values,  see for example \cite{mammonetignolwad:char2u}. In characteristic $2$, one can also consider  similar invariants  related to the anisotropy of symmetric  bilinear forms, rather than quadratic forms.
 Again, this invariant is equal to the usual $u$-invariant for nonsingular quadratic forms in characteristic different from $2$, as here symmetric bilinear forms and quadratic forms are equivalent objects.

In \cite{Mahmoudi:heruinv}  a  hermitian version of the $u$-invariant  was introduced in characteristic different from $2$. If $(D,\theta)$ is a division algebra over $F$ with involution $\theta$, then $u(D,\theta)$ is the  supremum of the dimensions of anisotropic hermitian forms over $(D,\theta)$. %(in fact this invariant does not depend on the involution $\theta$ see \cite[(2.3)]{Mahmoudi:heruinv} and \cref{noinv}).  %This definition can be made independently of the characteristic of field, but \cite{Mahmoudi:heruinv} studies only fields of characteristic different from $2$ (see also \cite{Pariharsuresh:uherforms}).
% Via the correspondence between involutions and hermitian forms, this invariants can be translated into the language of involutions. Here, the $u$-invariant of a central simple $F$-algebra $A$ would then be the maximum coindex (the degree of an algebra divided by its index) of an algebra $B$ Brauer equivalent to $A$ such that there exists an anisotropic involution on $B$. 
This definition %of the $u$-invariant of a division algebra with involution for hermitian forms  
can also be applied in characteristic $2$. 
However, as for fields,  there is  a large variety of invariants analogous to this $u$-invariant to investigate in characteristic $2$. These  include other invariants related to hermitian  forms, but also invariants related to
 generalised quadratic forms.
 Generalised quadratic forms (also known as pseudo-quadratic forms) are an extension of the concept of quadratic forms over a field to the setting of central simple division algebras with involution, first introduced in \cite{Tits:genquadforms}. They generalise quadratic forms to this setting in an analogous manner to the generalisation of symmetric  bilinear forms to hermitian forms (see \cref{basics}). In particular, they are used to study twisted orthogonal groups in characteristic $2$ much as hermitian forms are used to study twisted orthogonal groups in characteristic different from $2$.
% Rather than being maps from a vector space over a field to that field, they are defined as being maps on a vector space over a division algebra to that division algebra modulo alternating elements  with respect to an involution. 
 So far the study of generalised quadratic forms has been largely restricted  to nonsingular quadratic forms, with  singular forms mainly only being considered over fields (see, for example, \cite{HoffmannLaghribi:qfpfisterneigbourc2}).
 Here we study $u$-invariants related both to  nonsingular generalised quadratic forms as well as to  forms that may be singular, following on from work begun in \cite{dolphin:singuni}. It is also convenient to consider $u$-invariants related to hermitian forms of specific types, specifically alternating and direct hermitian forms (the latter were introduced in \cite{dolphin:direct}).

%  introduced in \cite{Tits:genquadforms} (see also \cite{wall:classher}). 
 %As in characteristic different from $2$, these $u$-invariants can be naturally reinterpreted in terms of involutions, in the case of hermitian forms, or quadratic pairs, in terms of generalised quadratic forms (see \cref{reint}). 
% Quadratic pairs were introduced in \cite[\S5]{Knus:1998} in order to study twisted orthogonal groups in characteristic $2$. The correspondence between quadratic forms (and generalised quadratic forms) to quadratic pairs is analogous to the correspondence between symmetric bilinear forms (and hermitian forms) and involutions (see \cref{adjoint}).
After some general preliminary results, we introduce  various  invariants for generalised quadratic and hermitian forms in characteristic $2$ in \cref{uinvars} and investigate their properties and interrelations.
In \cref{directandtotdecomp} we calculate  our $u$-invariant related to direct hermitian forms for division algebras that are a tensor product of quaternion algebras, generalising   a  result for symmetric bilinear forms over fields (see \cref{splitcase}). 
 We  consider the values of our invariants for $F$-quaternion algebras in \cref{quatalgs} and determine them when there is a unique square class in $F$. We 
 further investigate our invariants 
 in the case where the given quaternion algebra is the unique division algebra over a field in \cref{kapfields}. %This latter class  includes algebras over local fields.
We aim to give bounds on our invariants in terms of the degree of the field extension $F/F^2$ (cf.~\cite[(1.2)]{Baeza:uinv2}).

While our definitions for the  invariants we study are still valid with no assumption on the characteristic of the underlying field, we always assume that we are in characteristic $2$. 
This is mainly for  clarity's sake, as 
 many of the different  $u$-invariants  are equivalent  in characteristic different from $2$ and yet can have wildly different values in characteristic $2$. Moreover several of our main results are substantially  different from their general characteristic different from $2$ analogues (see for example \cref{charnot2}).  

\section{Algebras with involution}

For two objects $\alpha$ and $\beta$ in a certain category, we write $\alpha\simeq\beta$ to indicate that they are {isomorphic}, i.e.~that there exists an isomorphism between them.
This applies to algebras with involution, but also to quadratic  and hermitian   forms, where the corresponding isomorphisms are  called {isometries}.

Throughout let $F$ be a field of characteristic $2$. 
We refer to \cite{pierce:1982} as a general reference on finite-dimensional algebras over fields, and for central simple algebras in particular, and to \cite{Knus:1998} for involutions.
Let $A$ be an (associative) $F$-algebra.  
We denote the group of invertible elements in $A$ by $A^\times$. If $A^\times=A\setminus\{0\}$ we say that $A$ is \emph{division}.
We denote the centre of $A$ by $Z(A)$.
For a field extension $K/F$, the $K$-algebra $A\otimes_F K$ is denoted by $A_K$.
%An element $e\in A$ is called an \emph{idempotent} if $e^2=e$.
 An $F$-\emph{involution on $A$} is an $F$-linear map $\sigma:A\rightarrow A$ such that  $\sigma(xy)=\sigma(y)\sigma(x)$ for all $x,y\in A$ and  $\sigma^2=\textrm{id}_A$. 
%In this section we recall the basic definitions and results we use on central simple algebras with involution. 

Assume now that $A$ is finite-dimensional and simple  (i.e.~it has no non-trivial  two-sided ideals).
By Wedderburn's Theorem (see \cite[(1.1)]{Knus:1998}), $A\simeq\End_D(V)$ for a finite-dimensional $F$-division algebra $D$  and a finite-dimensional right $D$-vector space $V$.
 The centre of $A$ is a field and   $\dim_{Z(A)}(A)$ is a square number, whose positive square root is called the \emph{degree of $A$} and is denoted $\mathrm{deg}(A)$. The degree of $D$ is called the \emph{index of $A$} and denoted $\mathrm{ind}(A)$. We call $A$ \emph{split} if $\mathrm{ind}(A)=1$, that is $A\simeq \End_F(V)$ for some finite-dimensional right $F$-vector space $V$. 
The \emph{coindex of $A$} is $\mathrm{deg}(A)/\mathrm{ind}(A)$ and is denoted $\mathrm{coind}(A)$.  
If $Z(A)=F$, then we call the $F$-algebra $A$ \emph{central simple}.
Two central simple $F$-algebras $A$ and $A'$ are called \emph{Brauer equivalent} if $A$ and $A'$ are isomorphic to endomorphism algebras of two right vector spaces over the same $F$-division algebra.  In this case we write $A\sim_{B} A'$.

% and 
%we call a field extension $K/F$ such that  $A_K$ is split a \emph{splitting field of $A$}.  
%Two central simple $F$-algebras $A$ and $B$ are called \emph{Brauer equivalent} if $A$ and $B$ are isomorphic to endomorphism algebras of two right vector spaces over the same $F$-division algebra. 
%If $A$ is a central simple $F$-algebra then we denote $\Trd_A:A\lra F$ the reduced trace map  and $\Nrd_A:A\lra F$ the reduced norm map, as defined in \cite[(1.6)]{Knus:1998}.

  Let $\Omega$ be an algebraic closure of $F$.  
  By Wedderburn's Theorem,  under scalar extension to $\Omega$ every central simple $F$--algebra of degree $n$ becomes isomorphic to $M_n(\Omega)$, the algebra of $n\times n$ matrices over $\Omega$. Therefore if $A$ is a central simple $F$--algebra we may fix an $F$--algebra embedding $A\rightarrow M_n(\Omega)$ and view every element $a\in A$ as a matrix in $M_n(\Omega)$. The characteristic polynomial  of this matrix has coefficients in $F$ and is independent of the embedding of $A$ into $M_n(\Omega)$ (see  \cite[\S 16.1]{pierce:1982}). We call this polynomial the \emph{reduced characteristic polynomial of $A$} and denote it by 
  $ \mathrm{Prd}_{A,a}= X^n- s_1(a)X^{n-1}+s_2(a)X^{n-2}-\ldots+(-1)^ns_n(a).$
  We call $s_1(a)$ the
   \emph{reduced trace of $a$} and  $s_n(a)$ the \emph{reduced norm of $a$} and denote them by $\Trd_A(a)$ and $\Nrd_A(a)$ respectively. We also denote $s_2(a)$ by $\Srd_A(a)$.

By an \emph{$F$-algebra with involution} we mean a pair $(A,\sigma)$ of a finite-dimensional central simple $F$-algebra $A$ and an $F$-involution $\sigma$ on $A$ (note that we only consider involutions that are  linear with respect to the centre of $A$, that is involutions of the first kind, here. See \cref{othercases}).  We use the following notation:
$$\mathrm{Sym}(A,\sigma)= \{ a\in A\mid\sigma(a)=a\}
%$\mathrm{Skew}(A,\sigma)= \{ a\in A\mid\sigma(a)=-a\}$\,
 \textrm{ and }\,
\Alt(A,\sigma)= \{ a-\sigma( a)\,|\, a\in A\}\,.$$
These are $F$-linear subspaces of $A$.
For a field extension $K/F$ we write $(A,\s)_K=(A\otimes_F K,\s\otimes\mathrm{id})$.
We call $(A,\sigma)$ \emph{isotropic}  if  there exists an element $a\in A\setminus\{0\}$ such that  $\sigma(a)a=0$, and \emph{anisotropic} otherwise. In particular if $A$ is division, then $(A,\s)$ is anisotropic. 
We call $(A,\s)$ \emph{direct} if for $a\in A$, $\s(a)a\in\Alt(A,\s)$ implies $a=0$.  Note that direct $F$-algebras with involution are always anisotropic as $0\in\Alt(A,\s)$.

\begin{prop}[{\cite[(9.3)]{dolphin:direct}}]\label{sepdir}
Let $(A,\s)$ be an $F$-algebra with  involution and let $K/F$ be a separable algebraic  field extension such that $A_K$ is split. Then $(A,\s)$ is direct if and only if $(A,\s)_K$ is anisotropic. 
\end{prop}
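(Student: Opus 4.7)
The plan is to reduce the proposition to a Galois-descent argument, by first establishing a split-case equivalence between directness and anisotropy, and then exploiting the $F$-linearity of the set of non-directness witnesses.

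First, I would prove the following split-case lemma: if $B\simeq\End_F(V)$ and $\s$ is the adjoint involution of a non-degenerate bilinear form $b$ on $V$, then $(B,\s)$ is direct iff it is anisotropic. Under the standard correspondence $a\leftrightarrow b_a(x,y)=b(x,ay)$ between $\End_F(V)$ and bilinear forms on $V$, the involution $\s$ goes to transposition and $\Alt(B,\s)$ to alternating bilinear forms, and $\s(a)a$ corresponds to $(x,y)\mapsto b(ax,ay)$. Thus non-directness amounts to the existence of nonzero $a$ with $\Im(a)$ contained in the $b$-isotropic vectors, while isotropy amounts to the existence of nonzero $a$ with $\Im(a)$ totally isotropic; in characteristic $2$ both are equivalent to the existence of a nonzero $b$-isotropic vector $v$, because the line $Fv$ is automatically totally isotropic (as $b(\lambda v,\mu v)=\lambda\mu b(v,v)$). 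This reduces the proposition to the claim: $(A,\s)$ is direct iff $(A,\s)_K$ is direct.

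The key observation for the descent is that
$N(A,\s):=\{\beta\in A : \s(\beta)\beta\in\Alt(A,\s)\}$ is an $F$-subspace of $A$. Closure under $F$-scaling is immediate since $\s(\lambda\beta)(\lambda\beta)=\lambda^2\s(\beta)\beta$, and closure under addition follows from
\[\s(\beta+\beta')(\beta+\beta')=\s(\beta)\beta+(\s(\beta)\beta'+\s(\beta')\beta)+\s(\beta')\beta',\]
where the middle bracket equals $x+\s(x)$ for $x=\s(\beta)\beta'$ and so lies in $\Alt(A,\s)$. Evidently $(A,\s)$ is direct iff $N(A,\s)=0$, and similarly $N(A_K,\s_K)$ is a $K$-subspace of $A_K$ with $(A_K,\s_K)$ direct iff $N(A_K,\s_K)=0$.

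With these two ingredients the descent is essentially standard. For $K/F$ Galois separable, $N(A_K,\s_K)$ is a $\mathrm{Gal}(K/F)$-stable $K$-subspace of $A\otimes_F K$, so Galois descent yields $N(A_K,\s_K)=N(A_K,\s_K)^G\otimes_F K$; a faithful-flatness argument identifies $N(A_K,\s_K)^G$ with $N(A,\s)$ (an element of $A$ lies in $\Alt(A,\s)\otimes_F K$ iff it lies in $\Alt(A,\s)$), and so $N(A_K,\s_K)=0$ iff $N(A,\s)=0$. For a general separable algebraic $K/F$ one passes to the Galois closure $\tilde K/F$ and applies the above both to $\tilde K/F$ and to $\tilde K/K$. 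The main obstacle, I expect, is the $F$-linearity of $N(A,\s)$: without it, the descent would require tracking a genuinely non-linear condition and could not be handled by standard Galois machinery. Once linearity is in hand the rest is routine, and the separability of $K/F$ enters cleanly only through its role in Galois descent.
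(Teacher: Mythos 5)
Your argument is correct. Note that the paper does not prove this statement at all --- it is quoted from \cite[(9.3)]{dolphin:direct} --- so there is no in-text proof to compare against; what you have written is a legitimate self-contained proof. The two pillars both check out: (i) the split-case equivalence of directness and anisotropy (your computation that $\s(a)a$ corresponds to $(x,y)\mapsto b(ax,ay)$, that $\Alt$ corresponds to alternating forms, and that in characteristic $2$ a single isotropic vector spans a totally isotropic line, is exactly the content of \cite[(5.1)]{dolphin:direct} invoked in Remark \ref{splitcase}); and (ii) the $F$-linearity of $N(A,\s)=\{\beta: \s(\beta)\beta\in\Alt(A,\s)\}$, which holds because $\s(\beta)\beta'+\s(\beta')\beta=x+\s(x)\in\Alt(A,\s)$ in characteristic $2$, after which Galois descent (plus $\Alt(A_K,\s_K)=\Alt(A,\s)\otimes_FK$ and passage to a Galois closure) is indeed routine. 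It is worth observing that only the implication ``direct over $F$ $\Rightarrow$ anisotropic over $K$'' uses descent and separability; the converse follows from $N(A,\s)\subseteq N(A_K,\s_K)$ and the split lemma alone. The route in the cited source goes through the correspondence of \cref{directtotsing}: a direct involution is adjoint to a direct hermitian form, whose associated totally singular quadratic form is anisotropic, and anisotropic totally singular forms stay anisotropic under separable extensions. Your $N(A,\s)$ is precisely the algebra-level avatar of the linearity of the zero set of a totally singular form that drives that classical fact, so your proof bypasses the choice of an adjoint hermitian form and works intrinsically in $(A,\s)$; the cost is having to redo the split-case dictionary by hand, which the form-theoretic route gets for free.
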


Let  $(A,\sigma)$ and $(B,\tau)$ be $F$--algebras with involution.
By a \emph{homomorphism of algebras with involution}  $\Phi:(A,\sigma)\rightarrow(B,\tau)$ we mean an $F$--algebra homomorphism $\Phi: A\rightarrow B$ satisfying $\Phi\circ\sigma=\tau\circ\Phi$. If there exists an isomorphism  $\Phi:(A,\sigma)\rightarrow(B,\tau)$, then we say  the $F$--algebras with involution $(A,\sigma)$ and $(B,\tau)$ are \emph{isomorphic}. % and we write  $(A,\sigma)\simeq(B,\tau)$.
Letting $(\sigma\otimes\tau)(a\otimes b)=\sigma(a)\otimes \tau(b)$ for $a\in A$ and $b\in B$ determines an $F$--involution  $\sigma\otimes \tau$ on the $F$--algebra $A\otimes_F B$. We denote the pair $(A\otimes_F B,\sigma\otimes \tau)$ by $(A,\sigma)\otimes(B,\tau)$.

\section{Quadratic and hermitian forms}\label{basics}

In this section we recall the basic terminology and results we use from hermitian and quadratic form theory. 
We refer to \cite[Chapter 1]{{Knus:1991}} as a general reference on hermitian and quadratic forms.

Let $(D,\theta)$ be an $F$-division algebra.
A \emph{hermitian  form over $(D, \theta)$} is a pair $(V,h)$ where $V$ is a finite-dimensional right $D$-vector space and $h$ is
 a bi-additive map  $h:V\times V\rightarrow D$ such that $h(xd_1,yd_2)=\theta(d_1)h(x,y)d_2$ and 
$h(y,x)=\theta({h(x,y)})$   for all  $x,y\in V$ and  $d_1,d_2\in D$.
 The \emph{radical of $(V,h)$} is the set $$\rad(V,h)=\{x\in V\mid h(x,y)=0 \textrm{ for all } y\in  V\}\,.$$
 We say that $(V,h)$ is
\emph{degenerate} if $\rad(V,h)\neq \{0\}$ and  \emph{nondegenerate} otherwise.

Let $\varphi=(V,h)$ be a hermitian form over $(D,\theta)$.
We call $\dim_D(V)$ the \emph{dimension of  $\varphi$} and denote it by $\dim_D(\varphi)$.
We say $\varphi$ \emph{represents an element $a\in D$} if $h(x,x)=a$ for some $x\in V\setminus\{0\}$. 
We call $\varphi$ \emph{isotropic} if it represents $0$ and \emph{anisotropic} otherwise.
 We say that $\varphi$ is \emph{alternating} if $h(x,x)\in \Alt(D,\theta)$ for all $x\in V$ and \emph{direct} if $h(x,x)\notin \Alt(D,\theta)$  for all $x\in V\setminus \{0\}$. Note that direct hermitian forms are always anisotropic. 
%For $c\in{F}^\times$ we denote by $c\varphi$ the $\lambda$-hermitian form $(V,ch)$, where $(ch)(x,y)=c(h(x,y))$ for $x,y\in V$.
%Note that if $(D,\theta)$ is of the first kind one must have that $\lambda=\pm1$. 
 %If $(D,\theta)=(F,\pm\id)$, then a $\lambda$-hermitian form over $(D,\theta)$ is called a \emph{bilinear form over $F$}.
 For $u\in D^\times$ we write $uh$ for the map $(x,y)\mapsto uh(x,y)$ for all $x,y\in V$ and if $u\in F^\times$ we write $u\varphi$ for the hermitian  form $(V,uh)$ over $(D,\theta)$.

Let $\varphi_1=(V,h_1)$ and $\varphi_2=(W,h_2)$ be hermitian forms over $(D,\theta)$.
By an \emph{isometry of hermitian forms $\phi:\varphi_1\rightarrow\varphi_2$} we mean an isomorphism of $D$-vector spaces $\phi:V\longrightarrow W$ such that $h_1(x,y)=h_2(\phi(x),\phi(y))$ for all $x,y\in V$.
We say that $\varphi_1$ and $\varphi_2$ are \emph{similar} if $\varphi_1\simeq c\varphi_2$ for some $c\in F^\times$.
%If such an isometry exists, we say $(V,h)$ and $(W,h')$ are \emph{isometric} and write $(V,h)\simeq(W,h')$. 
The \emph{orthogonal sum} of  $\varphi_1$ and $\varphi_2$ is defined to be the pair $(V\times W, h)$ where the $F$-linear map  $h:(V\times W)\times (V\times W)\rightarrow D$ is given by $h((v_1, w_1),(v_2, w_2))= h_1(v_1,v_2) +h_2(w_1,w_2)$ for any $v_1, v_2\in V$ and $w_1,w_2\in W$; we  denote it by  $ \varphi_1\perp \varphi_2$.

\begin{prop}[{\cite[(5.6)]{dolphin:direct}}]\label{direct}
Let $\varphi$ and $\psi$ be anisotropic hermitian forms over $(D,\theta)$ such that $\varphi$ is direct and $\psi$ is alternating. Then $\varphi\perp \psi$ is anisotropic.
\end{prop}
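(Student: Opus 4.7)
The plan is to argue by contradiction. Suppose $(v,w)\in V\times W$ is a nonzero vector with $h\bigl((v,w),(v,w)\bigr)=0$, where $\varphi=(V,h_1)$, $\psi=(W,h_2)$ and $h$ denotes the form of $\varphi\perp\psi$. By definition of the orthogonal sum this reads $h_1(v,v)+h_2(w,w)=0$, so
$$h_1(v,v)=-h_2(w,w).$$
I would then exploit the two structural hypotheses on the summands in sequence.

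First, since $\psi$ is alternating, $h_2(w,w)\in\Alt(D,\theta)$. Because $\Alt(D,\theta)$ is an $F$-linear subspace of $D$ (as recorded in the preliminary section), it is closed under negation, so $-h_2(w,w)\in\Alt(D,\theta)$ and therefore $h_1(v,v)\in\Alt(D,\theta)$. Now the directness of $\varphi$ says precisely that $h_1(x,x)\in\Alt(D,\theta)$ forces $x=0$, so we conclude $v=0$.

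Substituting back gives $h_2(w,w)=0$, and since $\psi$ is anisotropic this forces $w=0$, contradicting $(v,w)\neq 0$. Hence $\varphi\perp\psi$ has no nonzero isotropic vector.

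I do not expect any real obstacle: the proof is a two-step filter using (i) the definition of alternating to land $h_1(v,v)$ in $\Alt(D,\theta)$ and (ii) the definition of direct to kill $v$; the only point to watch is that the argument does not use $\mathrm{char}(F)=2$ in any essential way beyond the fact that $\Alt(D,\theta)$ is an $F$-subspace, which in particular contains the negatives of its elements.
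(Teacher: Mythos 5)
Your argument is correct, and it is the natural one: the paper itself gives no proof of this proposition but merely cites \cite[(5.6)]{dolphin:direct}, where essentially the same two-step filter (alternating forces $h_1(v,v)\in\Alt(D,\theta)$, directness kills $v$, anisotropy of $\psi$ kills $w$) is used. The only cosmetic remark is that in characteristic $2$ the sign is vacuous, and the hypothesis that $\varphi$ be anisotropic is not even needed, since direct forms are automatically anisotropic.
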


 For $a_1,\ldots,a_n\in D\cap\,\mathrm{Sym}(D,\theta)$, we denote by $\qf{a_1,\ldots,a_n}_{(D,\theta)}^{h}$ the hermitian form $(D^n,h)$ over $(D,\theta)$ where  $h:D^n\times D^n\rightarrow D$ is given by $$(x,y)\mapsto \sum_{i=1}^n \theta( x_i)a_iy_i\, .$$
% When the algebra with involution $(D,\theta)$ is clear from context, we drop it from the notation.
%We call such a form a \emph{diagonal form}. We call a hermitian form \emph{diagonalisable} if it is isometric to a diagonal form. 

A \emph{symmetric bilinear form over $F$} is a hermitian form over $(F,\id)$. 
Let $\varphi=(V,b)$ be a symmetric bilinear form over $F$ and  $\psi=(W,h)$ be a hermitian form over   $(D,\theta)$. Then $V\otimes_FW$ is a finite dimensional right $D$-vector space. Further, 
 there is a unique $F$-bilinear map $b\otimes h:(V\otimes_F W)\times (V\otimes_F W)\rightarrow F$ such that 
$(b\otimes h)\left( (v_1\otimes w_1), (v_2\otimes w_2)\right) =b(v_1,v_2)\cdot h(w_1,w_2) $
for all $w_1,w_2\in W, v_1,v_2\in V$. We call the hermitian form $(V\otimes_F W, b\otimes h)$ over $(D,\theta)$ the  \emph{tensor product of $\varphi$ and $\psi$}, and denote it by $\varphi\otimes \psi$ (cf.~\cite[Chapter 1, \S8]{Knus:1991}).
Let $K/F$ be a field extension. Then we write $(V,b)_K=(V\otimes_F K, b_K) $ where $b_K$ is  given by $b_K( v\otimes k , w\otimes k')= kk'b(v,w)$ for all $v,w\in V$ and $k,k'\in K$.  %This induces a natural inclusion $W_q(F)\rightarrow W_q(K)$.
For a positive integer $m$, by an \emph{$m$-fold bilinear Pfister form over $F$} we mean a nondegenerate symmetric bilinear form over $F$ that is isometric to
 $$\qf{1,a_1}^h_{(F,\id)}\otimes\ldots\otimes\qf{1,a_m}^h_{(F,\id)}$$  for some 
 $a_1,\ldots,a_m\in F^\times$ and we denote this form by $\pff{a_1,\ldots, a_m}$.  We call $\qf{1}^h_{(F,\id)}$  the \emph{$0$-fold bilinear Pfister form}.

Let   $(V,h)$ be a nondegenerate hermitian form over $(D,\theta)$.
By \cite[(4.1)]{Knus:1998}, there is a unique $F$--involution $\sigma$ on $\End_D(V)$  such that 
$$h(f(x),y)=h(x,\sigma(f)(y)) \textrm{ for all } x,y\in V \textrm{ and } f\in \End_D(V).$$
We call $\sigma$ the \emph{involution adjoint to} $h$ and denote it by $\ad_h$, and we  write $\mathrm{Ad}(V,h)=(\End_D(V),\ad_h)$. By \cite[(4.2)]{Knus:1998}, for any $F$--algebra with involution $(A,\sigma)$ such that $A\sim_BD$,
there exists an  $F$-division algebra  with involution $(D,\theta')$   and
 a  hermitian form $\varphi$ over $(D,\theta')$  such that $\Ad{(\varphi)}\simeq (A,\sigma)$. The hermitian form $\varphi$ over $(D,\theta')$ is uniquely determined up to similarity. 
Let $L$ be a splitting field of the $F$--algebra $A$. An involution $(A,\sigma)$ is said to be \emph{symplectic} if $(A,\sigma)_L\simeq \mathrm{Ad}(\psi)$ for some  alternating bilinear form $\psi$ over $L$, and \emph{orthogonal} otherwise. This definition is independent of the choice of the splitting field $L$ (see  \cite[Section 2.A]{Knus:1998}). 
\begin{prop}\label{prop:sympalt} Let $\varphi$ be a nondegenerate  hermitian form over $(D,\theta)$. 
\begin{enumerate}[$(a)$]
\item  $\Ad(\varphi)$ is isotropic if and only if $\varphi$ is isotropic.
\item $\Ad(\varphi)$ is symplectic if and only if $\varphi$ is alternating. Otherwise $\Ad(\varphi)$ is orthogonal. In particular direct  involutions are orthogonal.
\item $\Ad(\varphi)$ is direct if and only if $\varphi$ is direct. 
\item For any nondegenerate  symmetric bilinear form $\psi$ over $F$ we have $\Ad(\psi\otimes \varphi)\simeq\Ad(\psi)\otimes\Ad(\varphi)$.
\end{enumerate}
\end{prop}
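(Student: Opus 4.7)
The plan is to treat the four parts in turn, reducing $(a)$, $(c)$ and $(d)$ to direct computations and $(b)$ to a scalar--extension argument. The basic tool for $(a)$ and $(c)$ is the description of rank--one elements of $\End_D(V)$: for $u,v\in V$, the map $f_{u,v}:y\mapsto u\cdot h(v,y)$ is $D$--linear, and a short computation using $h(xd,y)=\theta(d)h(x,y)$ and $h(y,x)=\theta(h(x,y))$ yields $\ad_h(f_{u,v})=f_{v,u}$; in particular $f_{x,x}$ is fixed by $\ad_h$, and $f_{x,x}^{\,2}(y)=x\cdot h(x,x)\cdot h(x,y)$.

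For $(a)$, the forward direction is immediate: if $\ad_h(f)f=0$ and $f(x)\ne 0$, then $h(f(x),f(x))=h(x,\ad_h(f)f(x))=0$. The converse uses $f_{x,x}$ for an isotropic $x\ne 0$, together with nondegeneracy of $\varphi$ to see that $f_{x,x}\ne 0$. For $(c)$, if $x\ne 0$ satisfies $h(x,x)=b-\theta(b)$, then setting $f=f_{x,x}$ and $g=f_{xb,x}$ gives $\ad_h(f)f=f^{\,2}=g-\ad_h(g)\in \Alt(\End_D(V),\ad_h)$ with $f\ne 0$, proving one direction. Conversely, if $\ad_h(f)f=g-\ad_h(g)$ and $f(x)\ne 0$, then $h(f(x),f(x))=h(x,\ad_h(f)f(x))$ together with the identity $h(x,\ad_h(g)(x))=\theta(h(x,g(x)))$ yields $h(f(x),f(x))=d-\theta(d)\in\Alt(D,\theta)$ with $d=h(x,g(x))$, so $\varphi$ is not direct.

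For $(d)$, the canonical isomorphism $\End_F(U)\otimes_F\End_D(V)\simeq\End_D(U\otimes_F V)$ combined with the defining formula for $\psi\otimes h$ shows that $\ad_{\psi\otimes h}$ sends $f\otimes g$ to $\ad_\psi(f)\otimes \ad_h(g)$ on decomposable tensors, and hence everywhere by linearity. For $(b)$, I would invoke the uniqueness up to similarity in \cite[(4.2)]{Knus:1998}: since $\Alt(D,\theta)$ is an $F$--subspace, the alternating condition is preserved by scaling by $F^\times$ and so defines a property of $\Ad(\varphi)$ alone. Passing to a splitting field $L$ of $A$ and using the correspondence between hermitian forms over $(D_L,\theta_L)$ and bilinear forms over $L$, the condition ``$\varphi_L$ alternating'' matches ``the associated bilinear form over $L$ alternating'', which is the definition of symplectic. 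The concluding claim that direct involutions are orthogonal then follows from $(b)$ and $(c)$ combined with the observation that a direct hermitian form is never alternating (since $0\in\Alt(D,\theta)$). The main obstacle lies in the splitting--field step of $(b)$: the Morita--type correspondence must be checked to preserve the alternating condition in characteristic $2$, where the usual ``$h(x,x)=0$'' formulation and the condition ``$h(x,x)\in\Alt$'' need to be matched with care.
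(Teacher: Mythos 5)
The paper does not prove this proposition at all: each of the four parts is handled by a citation (part $(a)$ to \cite[(3.2)]{dolphin:quadpairs}, part $(b)$ to \cite[(4.2)]{Knus:1998}, part $(c)$ to \cite[(7.3)]{dolphin:direct}, part $(d)$ to \cite[(3.4)]{dolphin:totdecompsymp}). Your proposal instead gives direct arguments, and for $(a)$, $(c)$ and $(d)$ these are correct and essentially complete. The rank-one operators $f_{u,v}\colon y\mapsto u\,h(v,y)$ with $\ad_h(f_{u,v})=f_{v,u}$ are exactly the right tool: your computations for $(a)$ (using $f_{x,x}^{\,2}=0$ when $h(x,x)=0$, and nondegeneracy to get $f_{x,x}\neq 0$) and for $(c)$ (the identity $\ad_h(f_{x,x})f_{x,x}=g-\ad_h(g)$ with $g=f_{xb,x}$ when $h(x,x)=b-\theta(b)$, and the converse via $h(x,\ad_h(g)(x))=\theta(h(x,g(x)))$) check out, and the argument for $(d)$ via uniqueness of the adjoint involution is standard. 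What your route buys is a self-contained proof where the paper has none.

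The one genuine gap is in $(b)$, and you have flagged it yourself: the assertion that the Morita-type correspondence over a splitting field $L$ matches ``$\varphi_L$ alternating'' with ``the associated bilinear form over $L$ alternating'' is precisely the content of \cite[(4.2)]{Knus:1998} and is not a formality in characteristic $2$ (for instance, one must first say in what sense $\varphi_L$, a hermitian form over the split algebra $(D_L,\theta_L)$ rather than over $(L,\id)$, is ``alternating''). A cleaner way to close it, consistent with the tools you already use, is to avoid Morita theory: in characteristic $2$ an involution of the first kind is symplectic if and only if $1\in\Alt(A,\sigma)$ (this is how \cite[(2.6)]{Knus:1998} is applied), so it suffices to show that $\id_V\in\Alt(\End_D(V),\ad_h)$ if and only if $\varphi$ is alternating. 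The ``if'' direction follows by writing $\id_V$ as a sum of terms $f_{x_i,y_i}$ adapted to a basis and applying your $(c)$-type computation; the ``only if'' direction is the same trace computation you use in the converse of $(c)$ with $f=\id_V$. As written, $(b)$ rests on an unverified correspondence, so the proposal is incomplete at exactly the point where the paper leans on the Book of Involutions.
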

\begin{proof}   For  $(a)$ see \cite[(3.2)]{dolphin:quadpairs} for the case where $(D,\theta)=(F,\id)$. The more general case is similar.
For $(b)$, see \cite[(4.2)]{Knus:1998}. For $(c)$ see \cite[(7.3)]{dolphin:direct}. For $(d)$ see \cite[(3.4)]{dolphin:totdecompsymp}.
\end{proof}

A \emph{quadratic form over $(D,\theta)$} is a pair $(V,q)$ where $V$ is a finite-dimensional right $D$-vector space and $q$ is a map $q:V\rightarrow D/\Alt(D,\theta)$ subject to the following conditions:
\begin{enumerate}[$(a)$]
\item  $q(xd)=\theta(d)q(x)d$ for all $x\in V$ and $d\in D$,
\item $q(x+y)- q(x)-q(y) = h(x,y) +\Alt(D,\theta)$ for all $x,y\in V$ and a hermitian form $(V,h)$ over $(D,\theta)$. 
\end{enumerate}
In this case the hermitian form $(V,h)$ is uniquely determined (see \cite[(1.1)]{tignol:qfskewfield}) and we call it the \emph{polar form of $(V,q)$}. Note that it follows from $(b)$ that $h(x,x)\in\Alt(D,\theta)$ for all $x\in V$, hence the polar form of any quadratic form over $(D,\theta)$ is  alternating. 
We call $(V,q)$ \emph{nonsingular} if  its polar form is nondegenerate and \emph{singular} otherwise.
If the polar form of $(V,q)$ is identically zero we call $(V,q)$ \emph{totally singular}.
We call $\dim_D(V)$ the \emph{dimension of  $(V,q)$} and denote it by $\dim_D(V,q)$.
We say that $(V,q)$   \emph{represents} an element $a\in D$ if $q(x)=a+\Alt(D,\theta)$ for some $x\in V\setminus \{0\}$.
  We call $(V,q)$ \emph{isotropic} it represents $0$  and \emph{anisotropic} otherwise.

\begin{remark}\label{othercases}
Quadratic and hermitian forms can be similarly defined if the characteristic of the base field is different from $2$ or if the involution is not linear with respect to the centre of the division algebra (i.e.~the involution is of the second kind). In this case, a quadratic form is uniquely determined by its polar hermitian form, and these two objects are essentially equivalent (see  \cite[Chapter 1, (6.6.1)]{{Knus:1991}}).
\end{remark}

Let $\rho_1=(V,q_1)$ and $\rho_2=(W,q_2)$ be quadratic forms over $(D,\theta)$. 
By an \emph{isometry of quadratic  forms $\phi:\rho_1\rightarrow\rho_2$} we mean an isomorphism of $D$-vector spaces $\phi:V\longrightarrow W$ such that $q_1(x)=q_2(\phi(x)) +\Alt(D,\theta)$ for all $x\in V$. 
%We say that $\rho_1$ and $\rho_2$ are \emph{similar} if $\rho_1\simeq c\rho_2$ for some $c\in F^\times$.
The \emph{orthogonal sum of $\rho_1$ and $\rho_2$} is defined to be pair $(V\times W,q)$ where the map $q:(V\times W)\rightarrow D/\Alt(D,\theta)$ is given by $q((v, w))= q_1(v) +q_2(w)$ for all $v\in V$ and $w\in W$, and we write  $ (V\times W, q)=\rho_1\perp \rho_2$.

\begin{remark}
In the following we refer to \cite[Chapter 1]{{Knus:1991}} and \cite{dolphin:singuni} for results on quadratic forms over a division algebra with involution. Both of these references deal with the slightly more general notion of a unitary  space. A quadratic form  over $(D,\theta)$ in our sense is a $(\lambda,\Lambda)$--unitary space over $(D,\theta)$ with $\lambda=1$ and form parameter $\Lambda=\Alt(D,\theta)$ (see \cite[Chapter 1, \S5]{Knus:1991} or  \cite[\S4]{dolphin:singuni}).
\end{remark}

%
%
%By \cite[Chapter 1, (5.6.1)]{{Knus:1991}}, up to isometry there is a unique  isotropic $2$-dimensional quadratic form over $(D,\theta)$. We denote it by $\HH_{(D,\theta)}$, or $\HH$ when $(D,\theta)$ is clear from context. 
%By \cite[Chapter 1, (6.5.3)]{Knus:1991} for any isotropic nonsingular quadratic form $\rho$ over $(D,\theta)$ we have $\rho\simeq \rho'\perp \HH_{(D,\theta)}$ for some quadratic form $\rho'$ over $(D,\theta)$. 
%

Let $\rho$ be a quadratic form over $(D,\theta)$. By \cite[(9.2)]{dolphin:singuni} there exists quadratic forms $\rho_1$ and $\rho_2$ over $(D,\theta)$ with $\rho_1$ nonsingular and $\rho_2$ totally singular such that $\rho\simeq\rho_1\perp \rho_2$. Further, the natural numbers  $n=\dim_D(\rho_1)$ and $m=\dim_D(\rho_2)$ are determined by $\rho$. In this situation we say that $\rho$ is \emph{of type $(n,m)$}.\footnote[2]{Our notion of type is slightly different than that used for fields in, for example, \cite{HoffmannLaghribi:qfpfisterneigbourc2}. In the case of a quadratic form over $(F,\id)$, the nonsingular part of the quadratic form is always even dimensional, and in the above situation \cite{HoffmannLaghribi:qfpfisterneigbourc2} would write that $\rho$ is of type $(\frac{1}{2}n,m)$. Since we work over  division algebras that are not necessarily  fields, we may have odd dimensional nonsingular quadratic forms.}

 For $a_1,\ldots,a_n\in D\setminus \Alt(D,\theta)$, we denote by $\qf{a_1,\ldots,a_n}_{(D,\theta)}$ the quadratic form $(D^n,q)$ over $(D,\theta)$ where  $q:D^n\rightarrow D$ is given by $$x\mapsto \sum_{i=1}^n a_ix_i +\Alt(D,\theta)\, .$$ We drop the algebra with involution $(D,\theta)$ from the notation when it is clear from context.
We call such a form a \emph{diagonal form}. We call a quadratic form \emph{diagonalisable} if it is isometric to a diagonal form.

\begin{lemma}\label{diagstuff}
Assume $D$ is not a field. Then every quadratic form  over $(D,\theta)$ is diagonalisable. Further, for $a,a_1,\ldots,a_n\in D\setminus \Alt(D,\theta)$ we have 
\begin{enumerate}[$(a)$]
\item $\qf{a+d}_{(D,\theta)}\simeq \qf{a}_{(D,\theta)}$ for all $d\in \Alt(D,\theta)$.
\item $\qf{a_1,\ldots, a_n}_{(D,\theta)}$ is nonsingular if and only if $a_1,\ldots, a_n\in D\setminus\Sym(D,\theta)$. 
\item  $\qf{a_1,\ldots, a_n}_{(D,\theta)}$ is totally singular if and only if $a_1,\ldots, a_n\in \Sym(D,\theta)$. 
\end{enumerate}
\end{lemma}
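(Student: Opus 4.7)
The plan is to verify $(a)$, $(b)$, $(c)$ by direct computation with the diagonal formula, and then prove the diagonalisation assertion by decomposing $\rho$ into its nonsingular and totally singular parts and handling each separately.

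For $(a)$: given $d \in \Alt(D,\theta)$, write $d = e-\theta(e)$ for some $e \in D$; then for every $x \in D$,
\[
\theta(x)\,d\,x \;=\; \theta(x)\,e\,x \,-\, \theta\bigl(\theta(x)\,e\,x\bigr) \;\in\; \Alt(D,\theta),
\]
so the maps $x \mapsto \theta(x)(a+d)x + \Alt(D,\theta)$ and $x \mapsto \theta(x)\,a\,x + \Alt(D,\theta)$ agree, giving $\qf{a+d} \simeq \qf{a}$ via the identity. A direct polarisation of $\qf{a_1,\ldots,a_n}_{(D,\theta)}$ (together with the identity $\theta(u)\theta(v_i)w \equiv \theta(w)v_i u \pmod{\Alt(D,\theta)}$) shows that its polar form is the diagonal hermitian form $\qf{a_1+\theta(a_1),\ldots,a_n+\theta(a_n)}^h_{(D,\theta)}$. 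This is nondegenerate iff each $a_i+\theta(a_i) \neq 0$ and identically zero iff each $a_i+\theta(a_i)=0$. Since $a+\theta(a)=0$ iff $a \in \Sym(D,\theta)$ in characteristic $2$, parts $(b)$ and $(c)$ follow.

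For the main diagonalisation claim, I would apply (9.2) of \cite{dolphin:singuni} to write $\rho \simeq \rho_1 \perp \rho_2$ with $\rho_1$ nonsingular and $\rho_2$ totally singular, and diagonalise the two summands separately. For $\rho_2$, the polar form vanishes identically, so any basis of the underlying space is automatically orthogonal and the semi-linearity $q_2(vd)=\theta(d)q_2(v)d$ then expresses $\rho_2$ as a diagonal form. For $\rho_1=(V_1,q_1)$ with polar form $h_1$, I would proceed by induction on $\dim V_1$: once one produces $v \in V_1$ with $h_1(v,v) \neq 0$, the one-dimensional subspace $vD$ is nondegenerate (since $h_1(vd,vd')=\theta(d)h_1(v,v)d'$ and $D$ is division), and then nondegeneracy of $h_1$ gives the splitting $V_1 = vD \oplus v^\perp$ with $\rho_1|_{v^\perp}$ again nonsingular, to which the induction hypothesis applies.

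The main obstacle is the existence of such a vector $v$, which is exactly where the hypothesis that $D$ is not a field enters. Suppose, for contradiction, that $h_1(w,w)=0$ for every $w \in V_1$. Polarising $h_1(v+w,v+w)=0$ and using $h_1(w,v)=\theta(h_1(v,w))$ yields $h_1(v,w)=\theta(h_1(v,w))$ for all $v,w \in V_1$, so $h_1$ takes values in $\Sym(D,\theta)$. Since $h_1$ is nondegenerate, for some $v_0$ the right $D$-linear map $w \mapsto h_1(v_0,w)\colon V_1 \to D$ is nonzero; its image is a nonzero right ideal of $D$, hence equals $D$ as $D$ is a simple right $D$-module. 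This forces $\Sym(D,\theta)=D$, so $\theta=\id_D$; but then $\theta(xy)=\theta(y)\theta(x)$ becomes $xy=yx$, making $D$ commutative and contradicting the hypothesis. Hence the required $v$ exists, completing the induction.
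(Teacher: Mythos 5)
Your argument is correct, and it is genuinely more self-contained than the paper's, which disposes of the diagonalisability claim by citing \cite[(6.3)]{dolphin:singuni} and of parts $(b)$ and $(c)$ by citing \cite[(7.5)]{dolphin:singuni}; only part $(a)$ is proved in the paper by the same direct computation you give. For $(b)$ and $(c)$ your explicit identification of the polar form of $\qf{a_1,\ldots,a_n}_{(D,\theta)}$ with $\qf{a_1+\theta(a_1),\ldots,a_n+\theta(a_n)}^h_{(D,\theta)}$ is exactly \cite[(6.2)]{dolphin:singuni}, which the paper itself invokes elsewhere (e.g.\ in the proof of the proposition on representation of symmetric elements), so this part is a legitimate unpacking of the citation. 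The real added content is your proof of diagonalisability: the decomposition into nonsingular and totally singular parts, the observation that the totally singular part is diagonal in any basis, and the inductive splitting of the nonsingular part. The crucial step --- producing $v$ with $h_1(v,v)\neq 0$ by showing that otherwise $h_1$ is $\Sym(D,\theta)$-valued, whence nondegeneracy forces $\Sym(D,\theta)=D$, $\theta=\id$ and $D$ commutative --- is sound and is precisely where the hypothesis that $D$ is not a field enters. It is worth adding one line to close the induction cleanly: if $b$ is any representative of $q_1(v)$, then $b+\theta(b)=h_1(v,v)\neq 0$, so $b\notin\Sym(D,\theta)\supseteq\Alt(D,\theta)$ and $\qf{b}_{(D,\theta)}$ is a legal (nonsingular) diagonal summand.

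One caveat concerns the totally singular part: your diagonal entries are the values $q_2(v_i)$, and the definition of a diagonal form requires them to lie in $D\setminus\Alt(D,\theta)$, i.e.\ $q_2(v_i)\neq 0$ in $D/\Alt(D,\theta)$. If $q_2$ vanishes on a nonzero vector, the set of such vectors is a $D$-subspace and no choice of basis repairs this, so such a form is genuinely not diagonalisable. This is an edge case inherited from the conventions of \cite{dolphin:singuni}, where unitary spaces carry a nondegeneracy condition excluding it (and it never arises for the anisotropic forms the paper actually works with), but in a self-contained proof it deserves an explicit sentence.
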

\begin{proof} Every quadratic form over $(D,\theta)$ is diagonalisable by \cite[(6.3)]{dolphin:singuni}. 
For all $x\in D$ and $d\in \Alt(D,\theta)$ 
we have $\theta(x)dx\in \Alt(D,\theta)$. Hence 
 $\theta(x)(a+d)x= \theta(x)ax +\Alt(D,\theta)$. Statement $(a)$ follows.
 Statements $(b)$ and $(c)$ follow from \cite[(7.5)]{dolphin:singuni} as the orthogonal sum of nonsingular (resp.~totally singular) forms is nonsingular (resp.~totally singular).  
\end{proof}

Let $\varphi=(V,h)$ be a hermitian form over $(D,\theta)$. Then the pair $(V,q_h)$ where $q_h:D\rightarrow D/\Alt(D,\theta)$ is the map given by $q_h(x)=h(x,x)+\Alt(D,\theta)$ for all $x\in V$ is  quadratic form over $(D,\theta)$. 
We call $(V,q_h)$ the \emph{quadratic form associated to $\varphi$}.
For all $x,y\in V$ we have 
$$ q_h(x+y) -q_h(x)-q_h(y)=  h(x,y)+\theta(h(x,y))\in \Alt(D,\theta)\,. $$
Hence the polar form of $(V,q_h)$ is identically zero. That is, $(V,q_h)$ is totally singular.
\begin{prop}\label{directtotsing}
Let $\varphi$ be a hermitian form over $(D,\theta)$ and let $\rho$ be its associated quadratic form. Then $\varphi$ is direct if and only if $\rho$ is anisotropic. Moreover, any anisotropic  totally singular quadratic form over $(D,\theta)$ is the associated quadratic form to some direct hermitian form over $(D,\theta)$ 
\end{prop}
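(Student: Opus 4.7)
For the equivalence, write $\varphi = (V,h)$, so that $\rho = (V, q_h)$ with $q_h(x) = h(x,x) + \Alt(D,\theta)$. For any $x \in V \setminus \{0\}$, the conditions $h(x,x) \notin \Alt(D,\theta)$ and $q_h(x) \neq 0$ in $D/\Alt(D,\theta)$ are literally the same; quantifying over all such $x$, the first defines $\varphi$ being direct and the second defines $\rho$ being anisotropic, so the equivalence is immediate.

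For the \emph{moreover}, let $\rho = (V,q)$ be an anisotropic totally singular quadratic form, and fix a basis $e_1,\ldots,e_n$ of $V$. Since the polar form of $\rho$ vanishes we have $q(v+w) = q(v) + q(w)$ for all $v,w \in V$, and combining this with axiom $(a)$ yields
\[ q\Bigl(\sum_{i=1}^n e_i x_i\Bigr) = \sum_{i=1}^n \theta(x_i)\, q(e_i)\, x_i \]
in $D/\Alt(D,\theta)$ for all $x_1,\ldots,x_n \in D$. I claim each coset $q(e_i)$ has a representative $a_i \in \Sym(D,\theta)$: picking any $a \in q(e_i)$ and expanding $q(e_i(1+d)) = q(e_i) + q(e_i d)$ via axiom $(a)$, then simplifying by means of $ad+\theta(ad) \in \Alt(D,\theta)$, reduces to the condition $\theta(d)(a+\theta(a)) \in \Alt(D,\theta)$ for every $d \in D$. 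Thus the left ideal $D(a+\theta(a))$ lies in $\Alt(D,\theta)$; since $D$ is a division algebra and $\Alt(D,\theta) \neq D$ (the latter being forced by the existence of a nonzero anisotropic quadratic form over $(D,\theta)$), we must have $a + \theta(a) = 0$, i.e.\ $a \in \Sym(D,\theta)$.

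Now set $\varphi = \qf{a_1,\ldots,a_n}^h_{(D,\theta)}$, a well-defined hermitian form precisely because the diagonal entries are symmetric. The displayed formula gives $q_\varphi = q$ on the nose, so $\rho$ is the quadratic form associated to $\varphi$, and the first part of the proposition applied to $\varphi$ yields that $\varphi$ is direct. The one delicate step is the construction of symmetric representatives, which is also the sole place where the division hypothesis on $D$ enters critically; alternatively, for $D$ not a field the diagonalisation with symmetric entries can be extracted from \cref{diagstuff}(c), while the case $D = F$ is immediate since $\Sym(F,\id) = F$.
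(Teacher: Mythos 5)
Your proof is correct and follows essentially the same route as the paper: the equivalence is read off directly from the definitions (being direct and having anisotropic associated quadratic form are the same condition on $h(x,x)$ modulo $\Alt(D,\theta)$), and the ``moreover'' is obtained by diagonalising the totally singular form with entries in $\Sym(D,\theta)$ and taking the corresponding diagonal hermitian form $\qf{a_1,\ldots,a_n}^h_{(D,\theta)}$. The only difference is that where the paper simply invokes \cref{diagstuff} for the diagonalisation with symmetric entries, you reprove that fact inline; your reduction to $\theta(d)(a+\theta(a))\in\Alt(D,\theta)$ for all $d\in D$, forcing $a\in\Sym(D,\theta)$ because $D$ is division and $\Alt(D,\theta)\neq D$, checks out.
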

\begin{proof}
Let $\varphi=(V,h)$ and $\rho=(V,q_h)$. 
Then  $\varphi$  is not direct if and only if there exists an $x\in V\setminus\{0\}$ such that $h(x,x)=q_h(x)\in \Alt(D,\theta)$. This shows the first statement.
Let $\rho'$ be an anisotropic  totally singular quadratic form over $(D,\theta)$. Then by \cref{diagstuff} there exist $a_1,\ldots, a_n\in \Sym(D,\theta)\setminus\{0\}$ such that $\rho'\simeq \qf{a_1,\ldots, a_n}_{(D,\theta)}$.  Let $\varphi \simeq\qf{a_1,\ldots, a_n}^h_{(D,\theta)}$. Then the  quadratic form associated to $\varphi$ is $\rho'$ and in particular  $\varphi$ is direct by the first part of the proof.
\end{proof}

For $u\in D^\times$, let $\textrm{Int}(u)$ denote the {inner automorphism determined by $u$}, that is the $F$--linear map $D\rightarrow D$ given by $a\mapsto uau^{-1}$ for all $a\in D$.

\begin{prop}\label{scalinginD}
For $i=1,2$ let $(D,\theta_i)$ be an $F$--division algebra with involution.  
\begin{enumerate}[$(i)$]
\item  There exists an element $u\in \Sym(D,\theta_1)\setminus \{0\}$ such that $\theta_2= \Int(u)\circ\theta_1$.
\item For every hermitian form $(V,h)$ over $(D,\theta_2)$, the pair $(V,uh)$ is a hermitian form over $(D,\theta_2)$.  Further, $(V,h)$ is isotropic (resp.~alternating or direct)  if and only if $(V,uh)$ is isotropic (resp.~alternating or direct).
\item For every quadratic form $(V,q)$ over $(D,\theta_1)$ the pair $(V,uq)$ is a quadratic form over $(D,\theta_2)$. Further, $(V,q)$ is isotropic (resp.~singular) if and only if $(V,uq)$ is isotropic  (resp.~singular).
\end{enumerate}
\end{prop}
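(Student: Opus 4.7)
The plan is to handle $(i)$ via Skolem--Noether and then to deduce $(ii)$ and $(iii)$ by direct verification, the main ingredient being the identity $u\cdot\Alt(D,\theta_1)=\Alt(D,\theta_2)$.

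For $(i)$, I would begin by observing that $\theta_2\circ\theta_1$ is an $F$-algebra automorphism of the central simple $F$-algebra $D$, so by the Skolem--Noether theorem there exists $u\in D^\times$ with $\theta_2\circ\theta_1=\Int(u)$, whence $\theta_2=\Int(u)\circ\theta_1$. To arrange that $u$ is symmetric, I would expand the relation $\theta_2^2=\id$ using the easily verified identity $\theta_1\circ\Int(u)=\Int(\theta_1(u)^{-1})\circ\theta_1$, which yields $\theta_2^2=\Int(u\theta_1(u)^{-1})$; hence $u\theta_1(u)^{-1}\in Z(D)^\times=F^\times$. Writing $u=\lambda\theta_1(u)$ with $\lambda\in F^\times$ and applying $\theta_1$ gives $\theta_1(u)=\lambda u$, and substituting back yields $\lambda^2=1$. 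In characteristic $2$ this forces $\lambda=1$, whence $u\in\Sym(D,\theta_1)\setminus\{0\}$.

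For $(ii)$, given a hermitian form $(V,h)$ over $(D,\theta_1)$, the approach would be to verify via a routine calculation, using $\theta_2(d)=u\theta_1(d)u^{-1}$ and $\theta_1(u)=u$, that $uh$ satisfies the sesquilinearity and symmetry axioms for a hermitian form over $(D,\theta_2)$. Isotropy is preserved because $u\in D^\times$, and the alternating and direct conditions follow from the key identity $u\cdot\Alt(D,\theta_1)=\Alt(D,\theta_2)$, which itself comes from the computation $u(b+\theta_1(b))=ub+\theta_2(ub)$ (using $\theta_1(u)=u$) together with the invertibility of $u$. Part $(iii)$ then follows at once: I would define $(uq)(x)=uq(x)+\Alt(D,\theta_2)$, which is well-defined by the key identity; axiom $(a)$ for a quadratic form follows just as the sesquilinearity computation in $(ii)$, and axiom $(b)$ holds with polar form $uh$, where $h$ is the polar form of $q$, shown to be hermitian over $(D,\theta_2)$ by $(ii)$. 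Preservation of isotropy and of singularity (i.e.\ non-degeneracy of the polar form) follow for the same reasons.

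The hard part is $(i)$: arranging for the Skolem--Noether element $u$ to lie in $\Sym(D,\theta_1)$ requires the characteristic $2$ hypothesis to rule out $\lambda=-1$, which would otherwise place $u$ in the skew-symmetric subspace. Once $(i)$ is in hand, the remaining parts reduce to formal verifications revolving around the single identity $u\cdot\Alt(D,\theta_1)=\Alt(D,\theta_2)$.
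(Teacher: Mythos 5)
Your proposal is correct and follows essentially the same route as the paper: part $(i)$ and the hermitian-form verification in $(ii)$ are exactly the facts the paper outsources to \cite[(2.7)]{Knus:1998} and \cite[(3.9)]{dolphin:direct}, which you simply re-derive via Skolem--Noether and direct computation, and your treatment of $(ii)$ and $(iii)$ hinges on the same key identity $u\cdot\Alt(D,\theta_1)=\Alt(D,\theta_2)$ that the paper records as equation (\ref{alt}) and uses in the identical way.
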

\begin{proof}
$(i)$ See \cite[(2.7), $(2)$]{Knus:1998}. Also by \cite[(2.7), $(2)$]{Knus:1998} we  have   
\begin{eqnarray}\label{alt}\Alt(D,\theta_2)=u\cdot \Alt(D,\theta_1)\,.\end{eqnarray}

$(ii)$ For the proof that $(V,uh)$ is a hermitian form over $(D,\theta_2)$ see \cite[(3.9)]{dolphin:direct}. That $(V,h)$ is isotropic if and only if $(V,uh)$ is isotropic is clear. That $(V,h)$ is alternating (resp.~direct)  if and only if $(V,uh)$ is alternating (resp.~direct)  follows  immediately from (\ref{alt}).

$(iii)$   First note that  by (\ref{alt})  $uq$ is a map from $V$ to $D/\Alt(D,\theta_2)$. 
Let $(V,h)$ be the polar form of $(V,q)$. 
For all $x,y\in V$ and $d\in D$ we have $$u(q(xd))=u\theta_1(d)(q(x))d=uu^{-1}\theta_2(d)(uq(x))d= \theta_2(d)(uq(x))d\,,$$
and 
$$uq(x+y)- uq(x)-uq(y) = uh(x,y) +u\cdot\Alt(D,\theta_1)\,.$$
Therefore, as $(V,uh)$ is a hermitian form over $(D,\theta_2)$ by $(ii)$,  $(V,uq)$ is a quadratic form over $(D,\theta_2)$.
It is clear that $(V,h)$ is degenerate if and only if $(V,uh)$ is degenerate and  hence  $(V,q)$ is singular if and only if $(V,uq)$ is singular.
Finally, it follows again by (\ref{alt}) that for $x\in V\setminus\{0\}$ we have $q(x)\in \Alt(D,\theta_1)$ if and only if $uq(x)\in \Alt(D,\theta_2)$.   Hence $(V,q)$ is isotropic if and only if $(V,uq)$ is isotropic. 
\end{proof}

By a  \emph{quadratic form over $F$}  we mean  a quadratic form $(V,q)$ over $(F,\id)$. 
%We say a quadratic form $(V,q)$ over $F$  \emph{represents} an element $a\in F^\times$ if $q(x)=a$ for some $x\in V\setminus \{0\}$. %We denote the set of non-zero elements represented by $(V,q)$ by $D(V,q)$. 
A subspace  $U\subseteq V$  is \emph{totally isotropic} (with respect to $q$) if $q|_U=0$. If $(V,q)$ is nonsingular, then we call it \emph{hyperbolic} if there exists a totally isotropic subspace $U$ of $V$ such that $\dim_F(U)=\frac{1}{2}\dim_F(V,q)$. A quadratic form  $(W,q')$ is a \emph{subform of $(V,q)$} if there exists an injective map $t:W\lra V$ such that $q'(t(x))= q(x)$ for all $x\in W$.

For $b,c\in F$, we denote the nonsingular quadratic form $(F^2,q)$ where $q:F^2\rightarrow F$ is given by $(x,y)\mapsto bx^2+xy+cy^2$ by $[b,c]$. 
Recall the concept of a tensor product of a symmetric or alternating bilinear form and a quadratic form (see \cite[p.51]{Elman:2008}).
For a positive  integer $m$, by an \emph{$m$-fold (quadratic) Pfister form over $F$}  we mean a quadratic form that is isometric to the tensor product of a $2$-dimensional nonsingular quadratic form representing $1$ and an $(m-1)$-fold bilinear Pfister form over $F$. For $a_1,\ldots, a_{m-1}\in F^\times$ we denote the $m$-fold Pfister form $\pff{a_1,\ldots, a_{m-1}}\otimes [1,b]$ by $\pfr{a_1,\ldots, a_{m-1},b}$.
 Pfister forms are either anisotropic or hyperbolic (see \cite[(9.10)]{Elman:2008}).

%
%
%We call a quadratic form $\rho$ over $F$ a \emph{$2$-fold Pfister form over $F$} if $\rho\simeq [1,c]\perp b[1,c]$ for some $c\in F$ and $b\in F^\times$. We denote such a form by $\pfr{b,c}$.  %We call a quadratic form over $F$ \emph{hyperbolic} if it is isometric to an orthogonal sum of the forms  $[0,0]$
%A $2$-fold Pfister form is either anisotropic or hyperbolic (see \cite[(9.10)]{Elman:2008}).
%
Let $\rho=(V,q)$ be a quadratic form.
For $a\in F^\times$ we write $aq$ for the map $x\mapsto aq(x)$ for all $x\in V$ and we write $a(V,q)$ for the quadratic form $(V,aq)$ over $F$.
Let $K/F$ be a field extension. Then we write $\rho_K=(V\otimes_F K, q_K) $ where $q_K$ is  the unique quadratic map such that $q_K( v\otimes b)= b^2q(v)$ for all $v\in V$ and $b\in K$.  %This induces a natural inclusion $W_q(F)\rightarrow W_q(K)$.

\section{$u$-invariants of quadratic and hermitian forms}\label{uinvars}

We now introduce the various invariants that we study and determine certain relations between them.  As we see later, it is convenient to consider not only $u$-invariants related to hermitian and quadratic forms in general, but also to direct or alternating hermitian forms and nonsingular quadratic forms.

Let $(D,\theta)$ be an  $F$-division algebra with involution.
\cref{scalinginD} immediately gives the following.

\begin{prop}\label{noinv}
The
supremum of the dimensions of anisotropic (direct or alternating) hermitian forms over $(D,\theta)$ does not depend on the $F$-involution $\theta$. Similarly, the   
supremum of the dimensions of anisotropic (nonsingular) quadratic  forms over $(D,\theta)$ does not depend on the $F$-involution $\theta$.
% following invariants $u^+(D,\theta)$, $u^+_{d}(D,\theta)$, $u^-(D,\theta)$, $u(D,\theta)$ and $\widetilde{u}(D,\theta)$ do not depend on the $F$-involution $\theta$. 
\end{prop}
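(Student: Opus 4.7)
The plan is to derive the proposition immediately from Proposition \ref{scalinginD}. Fix two $F$-involutions $\theta_1$ and $\theta_2$ on $D$. By Proposition \ref{scalinginD}(i) there exists $u \in \Sym(D,\theta_1) \setminus \{0\}$ with $\theta_2 = \Int(u) \circ \theta_1$; applying the same statement with the roles of $\theta_1$ and $\theta_2$ swapped produces some $v \in \Sym(D,\theta_2) \setminus \{0\}$ with $\theta_1 = \Int(v) \circ \theta_2$. Both $u$ and $v$ lie in $D^\times$ since $D$ is division.

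For the hermitian assertion, I would argue as follows. Given any anisotropic hermitian form $(V,h)$ over $(D,\theta_1)$, Proposition \ref{scalinginD}(ii) produces a hermitian form $(V,uh)$ over $(D,\theta_2)$ of the same $D$-dimension, and the ``further'' clause of (ii) guarantees that $(V,uh)$ is anisotropic (respectively, alternating or direct) if and only if $(V,h)$ is. Hence the supremum of dimensions of anisotropic (resp.\ direct, resp.\ alternating) hermitian forms over $(D,\theta_1)$ is at most the corresponding supremum over $(D,\theta_2)$. The symmetric argument using $v$ in place of $u$ yields the reverse inequality, so the two suprema coincide.

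For the quadratic form assertion, the identical strategy works using Proposition \ref{scalinginD}(iii) in place of (ii): scaling by $u$ turns a quadratic form $(V,q)$ over $(D,\theta_1)$ into a quadratic form $(V,uq)$ over $(D,\theta_2)$ of the same dimension, and preserves both anisotropy and nonsingularity (the latter because $(V,uq)$ is singular iff $(V,q)$ is). Combining with the analogous scaling by $v$ gives equality of the suprema for anisotropic forms and for anisotropic nonsingular forms.

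There is no real obstacle here: the proposition is a bookkeeping consequence of the explicit scaling bijections. The only minor point worth flagging is that one must invoke the ``further'' clauses of parts (ii) and (iii) separately for each of the properties (anisotropic / alternating / direct / nonsingular) in order to conclude equality for each flavour of supremum.
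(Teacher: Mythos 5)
Your proof is correct and is exactly the argument the paper intends: the paper's "proof" of this proposition is the single sentence that it follows immediately from \cref{scalinginD}, and your write-up just makes the bookkeeping (scaling by $u$ one way, by $v$ the other, checking each property is preserved) explicit.
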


Given \cref{noinv}, 
we define the following hermitian form $u$-invariants of the $F$-division algebra $D$.
\begin{eqnarray*}u^+(D)&=& \mathrm{sup}\{\dim_D(\varphi)\mid \varphi  \textrm{ is an anisotropic hermitian form  over $(D,\theta)$} \}\,,\\
u^+_d(D)&=& \mathrm{sup}\{\dim_D(\varphi)\mid   \varphi \textrm{ is a direct hermitian form over $(D,\theta)$} \}\,,\\
u^-(D)&=& \mathrm{sup}\{\dim_D(\varphi)\mid \varphi  \textrm{ is an anisotropic alternating hermitian } \\  &&\qquad\qquad\qquad\qquad\qquad\qquad\qquad\qquad\qquad  \textrm{form over $(D,\theta)$} \}\,.\end{eqnarray*}
Similarly, we have the following quadratic form $u$-invariants.
\begin{eqnarray*}
u(D)&=& \mathrm{sup}\{\dim_D(\rho)\mid  \rho  \textrm{ is an anisotropic quadratic form %\\  %&&\qquad\qquad\qquad\qquad\qquad\qquad\qquad\qquad\qquad\qquad 
 over $(D,\theta)$} \}\,,\\
\widetilde{u}(D)&=& \mathrm{sup}\{\dim_D(\rho)\mid   \rho \textrm{ is an anisotropic nonsingular quadratic } \\  &&\qquad\qquad\qquad\qquad\qquad\qquad\qquad\qquad\qquad  \textrm{form over $(D,\theta)$} \}\,.\end{eqnarray*}

\begin{remark}\label{invint} Using \cref{prop:sympalt}, the invariants $u^+(D)$ (resp.~$u^-(D)$ or $u^+_d(D)$) can be naturally reinterpreted as the supremum of all coindices of central simple $F$-algebras $A$  with $A\sim_B D$ such that there exists an anisotropic $F$-algebra with orthogonal (resp.~symplectic or direct) involution $(A,\s)$. Similarly, using the analogous relationship between nonsingular quadratic forms over an algebra with involution and quadratic pairs (see \cite[\S1]{tignol:qfskewfield} and \cite[\S5]{Knus:1998}) we can interpret $\widetilde{u}(D)$ as the supremum of all coindices of central simple $F$-algebras $A$  with $A\sim_B D$ such that there exists an anisotropic $F$-algebra with quadratic pair $(A,\s,f)$.

Note that we cannot reinterpret $u(D)$ in similar terms as 
in order to construct an adjoint quadratic pair we must assume our quadratic form is nonsingular (see \cite[(1.4)]{tignol:qfskewfield}). 
There is currently no known notion of a quadratic pair-like object that can be associated with a singular quadratic form.
\end{remark}
%The following results comparing $u^+(D), u^-(D)$ and $\widetilde{u}(D)$ are straight-forward, but  they are of interest as  the  analogous statements for  $u$-invariants over fields of characteristic different from $2$ do not hold in general.

\begin{remark}\label{totsing}
By \cref{directtotsing} we may also reinterpret $u_d^+(D)$ as the supremum of the dimensions of anisotropic totally singular quadratic forms over $(D,\theta)$.
\end{remark}

\begin{prop}\label{always2}
Assume  $D$  is not a field. Then  $u^{+}_d(D)$ and $u^{-}(D)$ are non-zero. 
\end{prop}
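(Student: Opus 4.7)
My plan has two parts.

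For $u^-(D)\neq 0$ the argument is direct. Since $D$ is not a field it is non-commutative, so $\theta\neq\mathrm{id}_D$, whence $\Alt(D,\theta)$ contains a nonzero element $a$. I consider the one-dimensional hermitian form $\qf{a}^h_{(D,\theta)}$. It is anisotropic: $D$ is division and $a\neq 0$, so $\theta(x)ax\neq 0$ whenever $x\neq 0$. It is alternating: writing $a=d-\theta(d)$, one has $\theta(x)ax=\theta(x)dx-\theta(\theta(x)dx)\in\Alt(D,\theta)$ for every $x\in D$. This yields $u^-(D)\geq 1$.

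For $u_d^+(D)\neq 0$ my plan is to first use \cref{noinv} to replace $\theta$ with any convenient involution on $D$, and then apply \cref{directtotsing} together with \cref{totsing} to reduce the task to producing a non-zero anisotropic totally singular quadratic form over $(D,\theta)$. By \cref{diagstuff}(c), a one-dimensional such form is of the shape $\qf{a}_{(D,\theta)}$ for some $a\in\Sym(D,\theta)\setminus\Alt(D,\theta)$; the dimension count $\dim_F\Sym(D,\theta)-\dim_F\Alt(D,\theta)=\deg D\geq 2$ in characteristic two guarantees $\Sym\supsetneq\Alt$, so such an $a$ indeed exists.

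The crux of the plan is the anisotropy check: I must show $\theta(x)ax\notin\Alt(D,\theta)$ for every $x\in D\setminus\{0\}$. My strategy is to exploit the freedom of \cref{noinv} to choose an orthogonal involution $\theta$ on $D$, so that $F\cap\Alt(D,\theta)=\{0\}$ and any $a\in F^\times$ automatically lies in $\Sym\setminus\Alt$. With this choice the question reduces to whether $\theta(x)x\in\Alt(D,\theta)$ has a nontrivial solution; I expect the answer to be negative, combining the division property of $D$ with the imperfection of $F$ (which is forced by the existence of a non-field $F$-division algebra with involution of the first kind in characteristic two). This anisotropy verification is the technical heart of the argument and the main obstacle I foresee.
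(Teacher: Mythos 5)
Your first half is correct and is essentially the paper's argument: a nonzero $a\in\Alt(D,\theta)$ exists because $\theta\neq\id_D$ on a noncommutative division algebra, and $\qf{a}^h_{(D,\theta)}$ is anisotropic (as $D$ is division) and alternating by the computation you give. This settles $u^-(D)\geqslant 1$.

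The second half has a genuine gap, and it sits exactly where you say it does. Your reduction to exhibiting some $a\in\Sym(D,\theta)\setminus\Alt(D,\theta)$ with $\qf{a}^h_{(D,\theta)}$ direct is the same reduction the paper makes (the paper quotes the dimension count from \cite[(2.6)]{Knus:1998} and then cites \cite[(5.4)]{dolphin:direct} for directness of the resulting one-dimensional form); but you leave the directness verification unproven, and your proposed route to it --- pass to an orthogonal involution, take $a\in F^\times$, and invoke ``the imperfection of $F$'' --- does not lead anywhere: imperfection of $F$ plays no role, and you would still owe an existence proof for an orthogonal involution on $D$. The missing step is in fact elementary and works for the original $\theta$ and any $a\in\Sym(D,\theta)\setminus\Alt(D,\theta)$: the set $\Alt(D,\theta)$ is stable under congruence by units in both directions, i.e.\ if $x\in D^\times$ and $\theta(x)ax=d-\theta(d)$ for some $d\in D$, then $a=\theta(x^{-1})dx^{-1}-\theta\bigl(\theta(x^{-1})dx^{-1}\bigr)\in\Alt(D,\theta)$, contradicting the choice of $a$. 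Hence $\theta(x)ax\notin\Alt(D,\theta)$ for all $x\neq 0$, so $\qf{a}^h_{(D,\theta)}$ is direct and $u^+_d(D)\geqslant 1$. With that one line inserted (and the detour through orthogonal involutions deleted), your argument closes and coincides with the paper's.
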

\begin{proof} %Let $\theta$ be an $F$-involution on $D$.
By \cite[(2.6)]{Knus:1998} there exists an element $y\in \Alt(D,\theta)\setminus\{0\}$ and an element $x\in \Sym(D,\theta)\setminus \Alt(D,\theta)$. 
The nondegenerate hermitian form $\qf{y}^h_{(D,\theta)}$ over $(D,\theta)$ is anisotropic and alternating, and the  nondegenerate hermitian form $\qf{x}^h_{(D,\theta)}$ over $(D,\theta)$ is direct (see \cite[(5.4)]{dolphin:direct}). Hence 
 $u^{+}_d(D)$ and $u^{-}(D)$ are  at least $1$.% and 
%$u^+(D)\geqslant 2$ follows by \cref{prop:start}, $(3)$.
\end{proof}

\begin{remark}\label{splitcase}
For the field $F$ we have  $u^-(F)=0$  as $\Alt(F,\id)=\{0\}$ and hence all alternating bilinear forms over $F$ are isotropic. %We therefore have that $u^+(F)=u^+_d({F})$. 
For any $a\in F^\times$, the symmetric bilinear form $\qf{a}^h_{(F,\id)}$ is anisotropic, hence we always have that $u^+_d({F})\neq 0$.  Further we also have $u_d^+(F)=u^+(F)$, as a symmetric bilinear form over a field is direct if and only if it is anisotropic by \cite[(5.1)]{dolphin:direct}.  
In fact we have $u^+(F)=[F:F^2]$ by  \cref{totsing} and \cite[(1.2, $(c)$)]{Baeza:uinv2}.
\end{remark}

We now show that $u(D)$ is the largest of our invariants. If $D$ is a field, this is clear from Remarks \ref{totsing} and \ref{splitcase}. However, when $D$ is not a field the relationship between our hermitian $u$-invariants and $u(D)$ is more subtle.

\begin{prop}\label{symreps} 
Let $\rho$ be a quadratic form over $(D,\theta)$ with polar form $\psi$. Then $\psi$ is anisotropic if and only if $\rho$ does not represent any elements in $\Sym(D,\theta)$.
%Let $\varphi$ be an  anisotropic  alternating  hermitian form over $(D,\theta)$. Then there exists a  nonsingular  quadratic form $\rho$ over $(D,\theta)$ with polar form $\varphi$ such that $\rho$ does not represent any elements in $\Sym(D,\theta)$.
 In particular if $\psi$ is anisotropic, $\rho$ is  nonsingular and anisotropic.
\end{prop}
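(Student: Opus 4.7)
The plan is to exploit the polar identity together with the sesquilinearity $h(xc, xd) = \theta(c) h(x,x) d$ to convert the problem into a question about two-sided ideals in $D$. Let $\rho = (V, q)$ and $\psi = (V, h)$, take $x \in V \setminus \{0\}$, and pick a representative $a \in D$ of $q(x) \in D/\Alt(D,\theta)$. Using the axiom $q(xe) = \theta(e) q(x) e$ and expanding $q(xc + xd) - q(xc) - q(xd)$, one gets the key congruence
\[\theta(c)\, h(x,x)\, d \;\equiv\; \theta(c)\, a\, d + \theta(d)\, a\, c \pmod{\Alt(D,\theta)}\]
for all $c, d \in D$. I will also use the easy observation that $\Alt(D,\theta) \subsetneq D$: otherwise every $b \in D$ would be of the form $c - \theta(c)$, forcing $\theta(b) = b$ for all $b$, hence $\theta = \id$, and then $\Alt(D,\theta) = \{0\}$, a contradiction.

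For the forward direction I argue the contrapositive: if $\rho$ represents some $a \in \Sym(D,\theta)$ at $x$, then $h(x,x) = 0$. Since $a = \theta(a)$, one has $\theta(\theta(c)\, a\, d) = \theta(d)\, a\, c$, so the right-hand side of the key congruence is $e + \theta(e)$ with $e = \theta(c) a d$; in characteristic $2$ this lies in $\Alt(D,\theta)$. Hence $\theta(c)\, h(x,x)\, d \in \Alt(D,\theta)$ for all $c, d \in D$. Setting $u = h(x,x)$ and taking $F$-linear combinations (using that $\theta$ is bijective), the entire two-sided ideal generated by $u$ is contained in $\Alt(D,\theta)$. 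Simplicity of $D$ forces this ideal to be $0$ or $D$; the latter is excluded, so $u = 0$ and $\psi$ is isotropic at $x$.

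For the reverse direction I argue the contrapositive: if $h(x,x) = 0$, I show that the representative $a$ of $q(x)$ is symmetric. Setting $c = 1$ in the key congruence yields $ad + \theta(d) a \in \Alt(D,\theta)$ for every $d \in D$. Applying the characteristic-$2$ identity $e + \theta(e) = 0$ for $e \in \Alt(D,\theta)$ gives $v d = \theta(d) v$ for all $d \in D$, where $v = a + \theta(a)$. Evaluating $v(cd)$ two ways as $(vc) d = \theta(c) \theta(d) v$ and as $\theta(cd) v = \theta(d) \theta(c) v$ yields $[\theta(c), \theta(d)] v = 0$, hence $[c, d] v = 0$ for all $c, d \in D$ by bijectivity of $\theta$. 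Since $D$ is a division ring, either $v = 0$ or $D$ is commutative; in the commutative case $D = F$, $\theta = \id$ and $v = 2a = 0$ anyway. Thus $a = \theta(a) \in \Sym(D,\theta)$, and $\rho$ represents $a \in \Sym(D,\theta)$.

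The ``in particular'' clause follows at once: $0 \in \Sym(D,\theta)$, so anisotropy of $\psi$ prevents $\rho$ from representing $0$, meaning $\rho$ is anisotropic; and any nonzero $y \in \rad(V, h)$ would satisfy $h(y, y) = 0$, contradicting anisotropy of $\psi$, so $\psi$ is nondegenerate and $\rho$ is nonsingular. The chief obstacle is that the polar identity only determines $h$ modulo $\Alt(D,\theta)$, whereas the desired conclusion $h(x,x) = 0$ is an equality in $D$; bridging this is precisely where varying $c$ and $d$ to obtain an ideal-theoretic condition, and then invoking simplicity of $D$, comes into play.
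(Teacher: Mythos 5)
Your proof is correct, and it takes a genuinely different route from the paper's. The paper first disposes of the case $(D,\theta)=(F,\id)$ (where the polar form is always isotropic and every represented element is symmetric), and for noncommutative $D$ it diagonalises $\rho\simeq\qf{b_1,\ldots,b_n}_{(D,\theta)}$ and invokes the fact from \cite[(6.2)]{dolphin:singuni} that the polar form is then $\qf{b_1+\theta(b_1),\ldots,b_n+\theta(b_n)}^h_{(D,\theta)}$; the computation $\sum\theta(x_i)(b_i+\theta(b_i))x_i=s+\theta(s)$ with $s=\sum\theta(x_i)b_ix_i$ makes the equivalence immediate, since in characteristic $2$ one has $s+\theta(s)=0$ exactly when $s\in\Sym(D,\theta)$. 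You instead work coordinate-free from the axioms, deriving the congruence $\theta(c)h(x,x)d\equiv\theta(c)ad+\theta(d)ac \pmod{\Alt(D,\theta)}$ and then upgrading mod-$\Alt(D,\theta)$ information to exact equalities in $D$: via the two-sided ideal generated by $h(x,x)$ together with $\Alt(D,\theta)\subsetneq D$ in one direction, and via the relation $vd=\theta(d)v$ for $v=a+\theta(a)$ plus a commutator argument in the other (with the commutative case collapsing to $v=2a=0$). What your approach buys is self-containment — no appeal to diagonalisability or to the explicit polar-form formula, and a uniform treatment of the field and non-field cases; what it costs is length, since the ideal and commutator arguments do by hand what the diagonal presentation gives for free. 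Both arguments handle the ``in particular'' clause identically. I checked the key congruence and both contrapositive steps; they are sound.
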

\begin{proof} First note that if $(D,\theta)=(F,\id)$ then $\psi$ is always isotropic by \cref{splitcase} and any element represented by $\rho$ is in $\Sym(D,\theta)=F$. 
Assume that $D$ is not a field. 
By \cref{diagstuff} there exists $b_1,\ldots, b_n\in D^\times$ such that $\rho\simeq \qf{b_1,\ldots, b_n}_{(D,\theta)}$. Let $a_i=b_i+\theta(b_i)$ for $i=1,\ldots, n$. Then by  \cite[(6.2)]{dolphin:singuni}, $\psi\simeq \qf{a_1,\ldots, a_n}^h_{(D,\theta)}$. 

%By \cite[Chapter 1, (6.2.4)]{Knus:1991} and since $\varphi$ is alternating there exists 
% $a_1,\ldots, a_n\in \Alt(D,\theta)\setminus\{0\}$ such that $\varphi\simeq\qf{a_1,\ldots, a_n}^h_{(D,\theta)}$.   Let $b_1,\ldots, b_n\in D^\times$ be such that $b_i+\theta(b_i)=a_i$ for $i=1,\ldots, n$. Then by \cite[(6.2)]{dolphin:singuni}, $\varphi$ is the polar form of the nonsingular quadratic form $\rho\simeq \qf{b_1,\ldots, b_n}_{(D,\theta)}$ over $(D,\theta)$. 

Suppose $\psi$ is anisotropic and that  there exist $x_1,\ldots, x_n\in D$ such that 
 $$\sum_{i=1}^n\theta(x_i)b_ix_i\in \Sym(D,\theta)\,.$$
 Then we have that 
 \begin{eqnarray*}\sum_{i=1}^n\theta(x_i)a_ix_i  &=& \sum_{i=1}^n\theta(x_i)(b_i +\theta(b_i))x_i = \sum_{i=1}^n\theta(x_i)b_ix_i\ + \sum_{i=1}^n\theta(x_i)\theta(b_i)x_i\\&=& \sum_{i=1}^n\theta(x_i)b_ix_i\ + \theta\left(\sum_{i=1}^n\theta(x_i)b_ix_i\right)=0\,. \end{eqnarray*}
Hence by the anisotropy of  $\varphi$  we have that $x_i=0$ for $i=1,\ldots, n$. That is, $\rho$ does not represent any elements in $\Sym(D,\theta)$. 

Conversely, suppose $\psi$ is isotropic, that is there exist $x_1,\ldots, x_n\in D$ not all zero such that 
$$0= \sum_{i=1}^n \theta(x_i) a_i x =\sum_{i=1}^n\theta(x_i)b_ix_i\ + \theta\left(\sum_{i=1}^n\theta(x_i)b_ix_i\right)\,.$$
That is, $\sum_{i=1}^n\theta(x_i)b_ix_i\in\Sym(D,\theta)$. 
\end{proof}

%We make use of the following corollary in \cref{kapfields}.

\begin{cor}\label{repsym}
Let $\rho$ be a  quadratic form over $(D,\theta)$ with $\dim_D(\rho)> u^-(D)$. Then $\rho$ represents an element in $\Sym(D,\theta)$.
\end{cor}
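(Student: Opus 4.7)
The plan is to use Proposition \ref{symreps} together with the definition of $u^-(D)$. The key observation is that the polar form of a quadratic form over $(D,\theta)$ is always alternating, and its dimension equals the dimension of the quadratic form itself.

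First, I would let $\psi$ denote the polar form of $\rho$. By the definition of a quadratic form over $(D,\theta)$, as noted immediately after the definition in the excerpt, we have $h(x,x) \in \Alt(D,\theta)$ for every $x$, so $\psi$ is an alternating hermitian form over $(D,\theta)$. Moreover, $\psi$ has underlying $D$-vector space equal to that of $\rho$, so $\dim_D(\psi) = \dim_D(\rho) > u^-(D)$.

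The second step is to conclude that $\psi$ cannot be anisotropic: by definition $u^-(D)$ is the supremum of dimensions of anisotropic alternating hermitian forms over $(D,\theta)$, and $\psi$ is alternating of strictly larger dimension than this supremum. Hence $\psi$ is isotropic.

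Finally I would invoke the contrapositive direction of \cref{symreps}: since $\psi$ is isotropic, $\rho$ must represent some element of $\Sym(D,\theta)$. This completes the argument. There is essentially no obstacle here — the whole statement is an immediate dimension-count consequence of the correspondence in \cref{symreps} once one identifies the polar form as a legitimate alternating hermitian form of the same dimension as $\rho$.
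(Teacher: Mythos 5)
Your argument is correct and is essentially identical to the paper's own proof: both identify the polar form as an alternating hermitian form of the same dimension as $\rho$, deduce its isotropy from the definition of $u^-(D)$, and conclude via the equivalence in \cref{symreps}. No gaps.
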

\begin{proof}
As the polar form of $\rho$ is alternating, $\dim_D(\rho)>u^-(D)$ implies that  the polar form of $\rho$ must be isotropic. Hence $\rho$ represents an element in $\Sym(D,\theta)$ by \cref{symreps}.
\end{proof}

\begin{thm}\label{prop:start}
 Let $D$ be an $F$-division algebra.
\begin{enumerate} 
\item[$(1)$] $u^+(D)= u^+_d(D)+u^-(D)$.
%\item[$(1)$] $u^-(D)< u^+(D)$.
\item[$(2)$] $u^-(D)\leqslant \widetilde{u}(D)$.
\item[$(3)$]  $u^+(D)\leqslant u(D)$.
\end{enumerate}
\end{thm}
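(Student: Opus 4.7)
The plan is to establish the three inequalities in turn, building (3) on the constructions used in (1) and (2). For (1), the $\geq$ direction is immediate from \cref{direct}: pick a direct form $\varphi_d$ of dimension arbitrarily close to $u^+_d(D)$ and an anisotropic alternating form $\varphi_a$ of dimension arbitrarily close to $u^-(D)$, and observe that $\varphi_d \perp \varphi_a$ is anisotropic of the summed dimension. For $\leq$, given an anisotropic hermitian form $\varphi = (V, h)$, I would set $W = \{x \in V \mid h(x, x) \in \Alt(D,\theta)\}$. Using that $c + \theta(c) \in \Alt(D, \theta)$ for all $c \in D$ and that $\theta(d) \cdot \Alt(D,\theta) \cdot d \subseteq \Alt(D,\theta)$ for all $d \in D$, one checks that $W$ is a right $D$-subspace and that $h|_W$ is alternating. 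Anisotropy of $\varphi$ forces $h|_W$ to be nondegenerate, giving the orthogonal decomposition $V = W \oplus W^\perp$; if some $v \in W^\perp \setminus \{0\}$ had $h(v, v) \in \Alt(D,\theta)$, then $v$ would lie in $W \cap W^\perp$, forcing $h(v, v) = 0$ and contradicting anisotropy. Hence $h|_{W^\perp}$ is direct, so $\dim \varphi = \dim W + \dim W^\perp \leq u^-(D) + u^+_d(D)$.

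For (2), the case $D = F$ is trivial as $u^-(F) = 0$ by \cref{splitcase}. Otherwise, diagonalize an anisotropic alternating form $\psi \simeq \qf{a_1, \ldots, a_n}^h_{(D,\theta)}$ with $a_i \in \Alt(D,\theta) \setminus \{0\}$, and write $a_i = b_i + \theta(b_i)$. Since $a_i \neq 0$ in characteristic $2$ forces $b_i \notin \Alt(D,\theta)$, the diagonal quadratic form $\rho = \qf{b_1, \ldots, b_n}_{(D,\theta)}$ is well-defined by \cref{diagstuff}, and its polar form is $\psi$ by the computation in \cite[(6.2)]{dolphin:singuni}. Anisotropy of $\psi$ together with \cref{symreps} forces $\rho$ to be nonsingular and anisotropic of dimension $n$, so $u^-(D) \leq \widetilde{u}(D)$.

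For (3), combine (1), \cref{totsing}, and the construction of (2). Take an anisotropic totally singular quadratic form $\rho_2$ of dimension close to $u^+_d(D)$, and an anisotropic nonsingular quadratic form $\rho_1$ of dimension close to $u^-(D)$ whose polar form is anisotropic (produced as in (2)). I would show $\rho_1 \perp \rho_2$ is anisotropic: an isotropic $(v, w) \neq 0$ would satisfy $\rho_1(v) + \rho_2(w) \in \Alt(D,\theta)$, and since $\rho_2$ is totally singular, $\rho_2(w)$ has a representative in $\Sym(D,\theta)$ (by \cref{diagstuff}(c)), so $\rho_1(v)$ would also be represented by a symmetric element. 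If $v \neq 0$ this contradicts \cref{symreps} applied to $\rho_1$; hence $v = 0$, which forces $\rho_2(w) \in \Alt(D,\theta)$, so $w$ is isotropic for $\rho_2$ and thus $w = 0$, a contradiction. Taking suprema yields $u(D) \geq u^+_d(D) + u^-(D) = u^+(D)$. I do not expect any single step to be a real obstacle: once one notices the canonical decomposition $V = W \oplus W^\perp$ of an anisotropic hermitian form into its alternating and direct parts in (1), the other inequalities follow by systematic application of \cref{symreps} and the polar-form identity from \cite[(6.2)]{dolphin:singuni}.
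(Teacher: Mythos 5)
Your proposal is correct and follows essentially the same route as the paper: part (2) via diagonalisation and \cref{symreps}, and part (3) by summing a totally singular form (representing only symmetric elements) with a nonsingular form whose polar form is anisotropic (representing no symmetric elements). The only difference is in (1), where the paper cites \cref{direct} (which really only yields $u^+(D)\geqslant u^+_d(D)+u^-(D)$) and leaves the reverse inequality to the reference \cite{dolphin:direct}, whereas you supply it explicitly and correctly via the canonical decomposition $V=W\oplus W^{\perp}$ with $W=\{x\in V\mid h(x,x)\in\Alt(D,\theta)\}$.
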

\begin{proof}  Let $(D,\theta)$ be an $F$-division algebra with involution.

$(1)$ This follows immediately from \cref{direct}.

%
%$(1)$
% Let $\varphi$ be a nondegenerate anisotropic  alternating hermitian form over $(D,\theta)$  and let 
% $\psi$ be a $1$-dimensional
% nondegenerate direct  hermitian form over $(D,\theta)$, which exists by \cref{always2}. Then $\eta=\varphi\perp \psi$ is an anisotropic   hermitian form over $(D,\theta)$ by \cite[(5.6)]{dolphin:direct} with $\dim_D(\eta)>\dim_D(\varphi)$.

 $(2)$
 If $D$ is a field then $u^-(D)=0$ by \cref{splitcase}, and the result is clear.
 Otherwise let $\psi$ be an anisotropic  alternating hermitian form over $(D,\theta)$. By \cite[Chapter 1, (6.2.4)]{Knus:1991} and since $\psi$ is alternating there exists 
 $a_1,\ldots, a_n\in \Alt(D,\theta)\setminus\{0\}$ such that $\psi\simeq\qf{a_1,\ldots, a_n}^h_{(D,\theta)}$.   Let $b_1,\ldots, b_n\in D^\times$ be such that $b_i+\theta(b_i)=a_i$ for $i=1,\ldots, n$. Then by \cite[(6.2)]{dolphin:singuni}, $\psi$ is the polar form of the nonsingular quadratic form $\rho\simeq \qf{b_1,\ldots, b_n}_{(D,\theta)}$ over $(D,\theta)$.  
Further, $\rho$ is anisotropic by  \cref{symreps}.

 %Let $(A,\sigma)$ be an anisotropic  $F$-algebra with symplectic involution  such that  $\coind(A)=u^-(D)$. As $\kar(F)=2$, by \cite[(2.6)]{Knus:1998} there exists an element  $u\in A$ such that $\sigma(u)+u=1$. It follows from \cite[(5.5)]{Knus:1998}  that there exists a semi-trace $f$ on $(A,\sigma)$. As $(A,\sigma)$ is anisotropic, it follows that $(A,\sigma,f)$ is anisotropic. 

$(3)$
Assume first that $D$ is a field. Then $u^-(D)=0$ by \cref{splitcase} and hence  $u_d^+(D)=u^+(D)$ by $(1)$. The result follows immediately in this case from   \cref{directtotsing}. Suppose now that $D$ is not a field and  there exists an $n$-dimensional direct  hermitian form  over $(D,\theta)$. Then   \cref{directtotsing} gives  an $n$-dimensional  anisotropic totally singular quadratic form $\rho$ over $(D,\theta)$.
Note that by \cref{diagstuff}, $(c)$, $\rho$ only represents elements in $\Sym(D,\theta)$. 

Suppose further that there exists an  $m$-dimensional anisotropic alternating hermitian form over $(D,\theta)$. 
As in $(2)$, using \cref{symreps}, there exists an $m$-dimensional anisotropic nonsingular  quadratic form $\rho'$ over $(D,\theta)$ which does not represent any elements in $\Sym(D,\theta)$. In particular,  the form $\rho\perp \rho'$ is anisotropic and  of dimension $n+m$. The result then follows from $(1)$.
\end{proof}

\begin{cor}\label{orth2} Assume $D$ is not a field. Then 
$u^+(D)$ and $u(D)\geqslant 2$. 
\end{cor}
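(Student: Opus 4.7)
The plan is to simply combine the immediately preceding results. Since $D$ is assumed not to be a field, \cref{always2} produces both an anisotropic direct hermitian form and an anisotropic alternating hermitian form (each of dimension at least $1$), so $u^+_d(D)\geq 1$ and $u^-(D)\geq 1$.

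Now I would invoke \cref{prop:start}$(1)$, which tells us $u^+(D)=u^+_d(D)+u^-(D)$, to conclude $u^+(D)\geq 1+1=2$. Then \cref{prop:start}$(3)$ gives $u(D)\geq u^+(D)\geq 2$, finishing the proof.

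There is no real obstacle here: the statement is a one-line consequence of the two results just established. The only subtlety worth noting is that the argument genuinely uses the hypothesis that $D$ is not a field, because over a field one has $u^-(F)=0$ by \cref{splitcase}, and the additive decomposition in \cref{prop:start}$(1)$ would then only recover $u^+(F)=u^+_d(F)$ rather than forcing it to be at least $2$.
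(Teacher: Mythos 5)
Your argument is exactly the paper's: \cref{always2} gives $u^+_d(D)\geq 1$ and $u^-(D)\geq 1$, then \cref{prop:start}$(1)$ yields $u^+(D)\geq 2$ and \cref{prop:start}$(3)$ carries this to $u(D)$. The proof is correct and matches the paper's one-line derivation.
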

\begin{proof}
This follows  from \cref{always2} and \cref{prop:start}, $(1)$ and $(3)$.
\end{proof}

\begin{remark}\label{charnot2} Let $K$ be a field of characteristic different from $2$. 
One can define $u$-invariants similar to the ones we study for hermitian and quadratic forms over a $K$-division algebra with involution $(E,\tau)$. 
Hermitian forms over $(E,\tau)$ can be either symmetric or skew-symmetric and generalised quadratic forms are equivalent to one of these types of hermitian form  (see \cite[Chapter 1, (6.5.2)]{Knus:1991}). In particular it follows that singular forms are isotropic. This results in two different $u$-invariants depending on whether one considers symmetric or skew-symmetric forms and on whether $\tau$ is orthogonal or symmetric (see \cite[(2.3)]{Mahmoudi:heruinv}). These invariants can be naturally associated with the coindices of either orthogonal or symplecitc anisotropic involutions, as in \cref{invint}, and we denote these invariants by $u^+(E)$ and $u^-(E)$ respectively (again see \cite[(2.3)]{Mahmoudi:heruinv}).

 %definitions of  $u^+(D), u^-(D)$, $u(D)$ and $\widetilde{u}(D)$ are still valid if the characteristic of the base field is different from $2$. In this case we have that $$u^+(D)=\widetilde{u}(D)=u(D)$$ as quadratic and hermitian forms  are equivalent (see \cite[Chapter 1, (6.5.2)]{Knus:1991}) and singular forms are automatically isotropic.

In characteristic different from $2$  the analogous  statements to \cref{prop:start} $(1)$ and $(2)$ do not hold in general. 
Let $Q$ be the unique $\mathbb{R}$-quaternion division algebra. Then $u^+(Q)=1$ by  \cite[(6.8)]{bechermah:uinvar}. However, as $u(\mathbb{R})=\infty$ it follows from  \cite[(10.1.7)]{Scharlau:1985} (cf.~\cref{prop:sympu}) that $u^-(Q)=\infty$. 
\end{remark}

\begin{remark}\label{alggroups}
The inequality $u^-(D)\leqslant \widetilde{u}(D)$ from \cref{prop:start} is a reflection of the fact that in characteristic $2$, any (twisted) orthogonal group is contained in an associated  (twisted)  symplectic group, and an orthogonal group is anisotropic if the associated symplectic group is anisotropic (see \cite[(17.3.6)]{LAG} for the case of an orthogonal group associated to a quadratic form over a field. The general case is similar. See also \cite[(2.15)]{AGNT} for details the on correspondence between the isotropy of forms and the associated groups in characteristic different from $2$. The result in characteristic $2$ is similar). 

 Further, \cref{symreps}  can be thought of as a classification of those orthogonal groups whose associated symplectic group is anisotropic. That is, the symplectic group associated to an orthogonal group is anisotropic if and only if the orthogonal group is isomorphic to the  group of isometries of a nonsingular generalised quadratic form over a division algebra with involution  that does not represent any symmetric elements. 
\end{remark}

%
%\begin{remark}
%Note that the inequality  $u^+(D)\leqslant u(D)$ from \cref{prop:start}, $(4)$ does not hold if $D$ is a field. Let $F$ be the field with $2$ elements.  It is easy to check that $\widetilde{u}(F)=2$, $u^+(F)=1$ and $u(F)=2$.
%\end{remark}

We now give an  upper bound for $u(D)$ in terms of our hermitian $u$-invariants.

\begin{thm}\label{bounds} Assume $D$ is not a field.
 Then
%$\widetilde{u}(D)\leqslant  u^+(D)+u^+_d(D)$.
$$ u(D)\leqslant  u^+(D)+u^+_d(D)\,.$$
\end{thm}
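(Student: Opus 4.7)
The plan is to split an anisotropic quadratic form $\rho$ over $(D,\theta)$ into pieces bounded independently by $u^+(D)$ and $u^+_d(D)$, by Witt-decomposing the polar hermitian form.

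I would start by writing $\rho \simeq \rho_1 \perp \rho_2$ with $\rho_1$ nonsingular of dimension $n$ and $\rho_2$ totally singular of dimension $m$, and letting $\psi_1$ denote the polar form of $\rho_1$, a nondegenerate alternating hermitian form over $(D,\theta)$. Applying Witt decomposition to $\psi_1$ (see \cite[Chapter 1]{Knus:1991}) yields $\psi_1 \simeq \psi_1^{\an} \perp k\,\HH$, where $\psi_1^{\an}$ is anisotropic alternating of dimension $n-2k$ and $\HH$ denotes the alternating hyperbolic hermitian plane. Correspondingly, there is a $\psi_1$-orthogonal decomposition $V_1 = V_1^{\an} \oplus V_1^{\mathrm{hyp}}$ of the underlying right $D$-vector space of $\rho_1$, and $V_1^{\mathrm{hyp}}$ contains a totally isotropic subspace $W_1$ of dimension $k$. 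Since $V_1^{\an}$ and $V_1^{\mathrm{hyp}}$ are $\psi_1$-orthogonal, one has $\rho_1 \simeq \rho_1|_{V_1^{\an}} \perp \rho_1|_{V_1^{\mathrm{hyp}}}$, and the polar form of $\rho_1|_{V_1^{\an}}$ is $\psi_1^{\an}$. Because $W_1$ is totally isotropic for $\psi_1$, the polar form of $\rho_1|_{W_1}$ vanishes identically, so $\rho_1|_{W_1}$ is a totally singular quadratic form of dimension $k$, anisotropic as a restriction of $\rho$.

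By \cref{directtotsing}, $\rho_1|_{W_1}$ is the associated quadratic form of a direct hermitian form of dimension $k$, whence $k \leq u^+_d(D)$. The subform $\rho_1|_{W_1} \perp \rho_2$ of $\rho$ is likewise totally singular, anisotropic, and of dimension $k+m$, so by \cref{directtotsing} it is associated to a direct hermitian form $\varphi$ of dimension $k+m$. Since $\psi_1^{\an}$ is anisotropic alternating and $\varphi$ is direct, \cref{direct} shows that $\psi_1^{\an} \perp \varphi$ is an anisotropic hermitian form of dimension $(n-2k)+(k+m) = n-k+m$, giving $n-k+m \leq u^+(D)$. Adding the two inequalities, $\dim_D(\rho) = n+m = (n-k+m)+k \leq u^+(D)+u^+_d(D)$, and passing to the supremum over anisotropic $\rho$ yields the stated bound.

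The main obstacle will be confirming that the Witt decomposition of the alternating hermitian form $\psi_1$ really descends to the claimed orthogonal decomposition of $\rho_1$, and that a totally isotropic subspace of maximal dimension $k$ can be located inside the hyperbolic part on which $\rho_1$ restricts to a totally singular quadratic form. With this structural step in hand, \cref{directtotsing} and \cref{direct} cleanly package the alternating and direct contributions so that their dimensions sum to $\dim_D(\rho)$.
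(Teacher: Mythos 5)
Your proof is correct, and it reaches the bound by a genuinely different mechanism than the paper. The paper works at the level of the quadratic form itself: whenever the nonsingular part represents a symmetric element $y$, it splits off a binary summand $\qf{x,x+y}$ using \cite[(8.2)]{dolphin:singuni}, iterates until the residual core represents no symmetric elements (so that, by \cref{repsym}, its dimension is at most $u^-(D)$), notes that the collected elements $y_i$ together with the totally singular part give an anisotropic totally singular form, and assembles the bound through the identity $u^+(D)=u^+_d(D)+u^-(D)$ from \cref{prop:start}. You instead work at the level of the polar form: its Witt index $k$ (which is exactly the number of binary summands the paper peels off, since each $\qf{x_i,x_i+y_i}$ contributes one hyperbolic plane to the polar form) furnishes the Lagrangian $W_1$ on which $\rho_1$ is totally singular, and $\psi_1^{\an}$ together with the direct form $\varphi$ attached to $\rho_1|_{W_1}\perp\rho_2$ yields, via \cref{direct}, a single anisotropic hermitian form of dimension $n-k+m\leqslant u^+(D)$; you thus never need $u^-(D)$ or \cref{prop:start}(1) explicitly. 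Both steps you flag as the "main obstacle" do go through. The Witt decomposition of $(V_1,h_1)$ is an internal orthogonal decomposition with respect to the polar form, which is precisely the orthogonality needed to split $\rho_1$ as an orthogonal sum of quadratic forms, and the polar form of the restriction to $V_1^{\an}$ is $\psi_1^{\an}$ by construction. As for the Lagrangian: since $\psi_1$ is alternating, every isotropic vector embeds in a genuinely hyperbolic plane (for $h_1(x,y)=1$ one can correct $y$ to $y+xd$ with $h_1(y+xd,y+xd)=h_1(y,y)+(d+\theta(d))=0$ because $h_1(y,y)\in\Alt(D,\theta)$), so the complement of the anisotropic part is hyperbolic and contains a totally isotropic subspace of dimension $k$; even if one only had a metabolic complement, the Lagrangian $W_1$ would still exist, which is all your argument uses. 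A small bonus of your version is that it makes no use of the hypothesis that $D$ is not a field, whereas the paper's route relies on \cref{diagstuff}.
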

\begin{proof}
%If $D$ is a field, then this follows from  \cite[(1.2), $(d)$]{Baeza:uinv2} and \cref{splitcase}.  Assume $D$ is not a field and
 Let $\rho$ be an anisotropic quadratic form over $(D,\theta)$. Let $\rho_1$ and $\rho_2$ be anisotropic nonsingular and totally singular forms respectively such that $\rho\simeq \rho_1\perp \rho_2$. By \cref{diagstuff}, $(c)$, there exists $w_1,\ldots, w_n\in \Sym(D,\theta)\setminus \Alt(D,\theta)$ such that $\rho_2\simeq \qf{w_1,\ldots, w_n}$. 

Consider $\rho_1$. If  $\rho_1$ represents an element  $y\in \Sym(D,\theta)$ then by \cite[(8.2)]{dolphin:singuni} there exists an $x\in D\setminus \Sym(D,\theta)$ and a nonsingular quadratic form $\rho'$ over $(D,\theta)$ such that $\rho_1\simeq \rho'\perp\qf{x,x+y}$. Repeating this argument, we can assume that 
$$\rho_1\simeq \rho'\perp\qf{x_1,x_1+y_1}\perp \ldots \perp \qf{x_m,x_m+y_m}$$
for some quadratic form $\rho'$ over $(D,\theta)$ that does not represent any element in $\Sym(D,\theta)$, $x_1,\ldots, x_m\in D\setminus\Sym(D,\theta)$ and $y_1,\ldots, y_m\in \Sym(D,\theta)$.  By \cref{repsym} we have that  $\dim_D(\rho')\leqslant u^-(D)$.
For all $d\in D^\times$ and $i=1,\ldots, m$ we have that $\qf{x_i,x_i+y_i}$ represents $\theta(d)y_id$. Therefore by the anisotropy of $\rho$ we have that the totally singular form $\qf{y_1,\ldots, y_m, w_1,\ldots, w_n}$ is anisotropic. That is, $m+n\leqslant u^+_d(D)$ by \cref{totsing}. Therefore $$\dim_D(\rho)=\dim_D(\rho') + 2m+n\leqslant u^-(D)+2u^+_d(D)=u^+(D)+u^+_d(D)$$
by \cref{prop:start}, $(1)$.
%$(2)$ As any anisotropic quadratic form over $(D,\theta)$ is isometric to an orthogonal sum of an anisotropic  nonsingular form and an anisotropic  totally singular form, the result follows from $(1)$ and \cref{totsing}.
\end{proof}

%We expect our bound in \cref{bounds}  to be quite coarse in general. For example, in \cref{kapfields} we consider  $u(D)$  in the case where $D$ is an $F$-quaternion division algebra over a local field $F$. In this case we get $u(D)=3$, whereas  \cref{bounds} gives the bound $u(D)\leqslant 4$. %Moreover, in \cref{bigus} we show that there exist division algebras $D$ with $\widetilde{u}(D)=3$ but with $u_d^+(D)$ arbitrarily large.

We also record the following upper bound on $u^+_d(D)$, which we further investigate in \cref{directandtotdecomp}.% for use  in \cref{kapfields}.

\begin{prop}\label{directbound}  Let $D$ be an $F$-division algebra.
Then $$u_d^+(D)\leqslant  \frac{[F:F^2]}{\deg(D)}\,.$$ % \quad\textrm{ and } \quad u^+(D)\leqslant  \frac{[F:F^2]}{\deg(D)}+u^-(D) \,.$$
\end{prop}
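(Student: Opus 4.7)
The plan is to pass to the adjoint $F$-algebra with involution, extend scalars along a separable splitting field of $D$, and then invoke the field case (Baeza's identity $u_d^+(L) = [L:L^2]$, recorded in \cref{splitcase}). Set $d = \deg(D)$ and take a direct hermitian form $\varphi$ over $(D,\theta)$ of $D$-dimension $m$. Since $\varphi$ is direct it is in particular nondegenerate (any vector in the radical would give $h(x,x) = 0 \in \Alt(D,\theta)$), so \cref{prop:sympalt}, $(b)$ and $(c)$, apply: the adjoint $\Ad(\varphi) = (A,\sigma)$ is a direct orthogonal $F$-algebra with involution with $A \sim_B D$ and $\deg(A) = md$.

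I would then choose a finite separable splitting field $L/F$ of $D$; such an $L$ exists since every central simple $F$-algebra admits a separable maximal subfield. By \cref{sepdir}, the base change $(A,\sigma)_L$ is still anisotropic as an $F$-algebra with involution. Because $A_L$ is split and $\sigma_L$ remains orthogonal, one has $(A,\sigma)_L \simeq \Ad(b)$ for some nondegenerate non-alternating symmetric bilinear form $b$ of $L$-dimension $md$. By \cref{prop:sympalt}, $(a)$, anisotropy of $(A,\sigma)_L$ forces $b$ to be anisotropic, and \cref{splitcase} then upgrades this to directness of $b$ over $L$. Consequently $md = \dim_L(b) \leq u_d^+(L) = [L:L^2]$.

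To finish, I would replace $[L:L^2]$ by $[F:F^2]$ using the standard field-theoretic identity $[L:L^2] = [F:F^2]$ valid for any finite separable algebraic extension $L/F$ in characteristic $2$ (a $2$-basis of $F$ remains a $2$-basis of $L$). Combined with the previous inequality, this yields $m \leq [F:F^2]/\deg(D)$, as claimed. The main obstacle is precisely the invocation of this last identity: it is the one input that does not already live inside the adjoint/involution machinery built up earlier in the paper. Everything else is a clean composition of \cref{sepdir}, the correspondence between anisotropic algebras with involution and anisotropic forms via $\Ad$, and Baeza's field bound.
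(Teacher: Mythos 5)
Your argument is correct and follows essentially the same route as the paper: pass to $\Ad(\varphi)$, use \cref{prop:sympalt} and \cref{sepdir} to get anisotropy over a separable splitting field, bound the dimension of the adjoint bilinear form by $[L:L^2]$ via \cref{splitcase}, and conclude with $[L:L^2]=[F:F^2]$ (the paper cites \cite[(1.3)]{Baeza:uinv2} for this last identity, and obtains its separable splitting field from the fact that $D$ has exponent $2$ rather than from a separable maximal subfield, but these are immaterial variations).
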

\begin{proof}
Let $\varphi$ be a direct hermitian form over $(D,\theta)$ and let $(A,\s)=\Ad(\varphi)$.
By 
\cite[(3.1)]{Knus:1998}, $D$ is of order $2$ in the Brauer group, and hence by 
\cite[(9.1.4)]{Gille:2006} splits over a separable algebraic extension $K/F$.
%Let $(A,\s)=\Ad(\varphi)$. 
 By \cref{prop:sympalt}, $(c)$ and   \cref{sepdir} we have that $(A,\s)_K$ is anisotropic.  Therefore by \cref{prop:sympalt}, $(a)$ there exists an anisotropic symmetric bilinear form $\delta$ over $K$ such that $(A,\s)_K\simeq \Ad(\delta)$. As $\dim_F(\delta)=\deg(A)$,   by \cref{splitcase} we have 
$$ \dim_D(\varphi)=  \mathrm{coind}(A) = \frac{\deg(A)}{\deg(D)}=\frac{\dim_F(\delta)}{\deg(D)}\leqslant \frac{[K:K^2]}{\deg(D)} \,. $$
By \cite[(1.3)]{Baeza:uinv2} we have $[F:F^2]=[K:K^2]$, and hence 
$u_d^+(D)\leqslant \frac{[F:F^2]}{\deg(D)}$. %The other inequality  follows by \cref{prop:start}, $(1)$.
\end{proof}

\section{Direct forms and totally decomposable algebras}\label{directandtotdecomp}

In this section we show that the inequality  in \cref{directbound} is in fact an equality  for an important class of division algebras. This generalises the fact that $u^+(F)=[F:F^2]$ from \cref{splitcase}. 
First we recall certain facts about quaternion algebras for use in this and following sections. 
 An $F$-\emph{quaternion algebra} is a central simple $F$-algebra of degree $2$. It follows that quaternion algebras are either split or division. 
We call a central simple $F$-algebra $A$ \emph{totally decomposable} if there exists $F$-quaternion algebras $Q_1,\ldots, Q_n$ such that $A\simeq Q_1\otimes_F\ldots\otimes_F Q_n$.

 Let $Q$ be an $F$-quaternion algebra.
 By  \cite[(2.21)]{Knus:1998}, the map $Q\rightarrow Q,$  $x\mapsto \Trd_Q(x)-x$ is the unique symplectic involution $\gamma$ on $Q$; it is called the \emph{canonical involution of $Q$}.
For all $x\in Q$ we  have $\Nrd_Q(x)=\gamma(x)x$.
Direct computation shows that $\Alt(Q,\gamma)=F$.
Any $F$-quaternion algebra has an $F$-basis $(1,i,j,k)$ such that
\begin{eqnarray}\label{eqnarray:qatbas}i^2 -i =a, j^2=b\,\textrm{ and }\, k=ij=j-ji\,,\end{eqnarray}
%$$i^2 =i+a, j^2=b\,\textrm{ and }\, k=ij=j-ji\,,$$
for some  $a\in F$  and $b\in F^\times$
 (see  \cite[Chap.~IX, Thm.~26]{Albert:1968});  such a basis is called an \emph{$F$-quaternion basis}.
 Conversely, for  $a\in F$  and $b\in F^\times$
 the above relations uniquely determine an $F$-quaternion algebra (up to $F$-isomorphism), which we denote by $[a,b)_F$. 
By the above, up to isomorphism any $F$-quaternion algebra is of this form.

%We finish this section with a result on $u^+_d(Q)$. 
By \cite[(7.1)]{Knus:1998}, for any $F$-algebra with orthogonal involution $(A,\s)$ with $\deg(A)$ even and  any $x,y\in \Alt(A,\s)$ we have $\Nrd_A(x)F^{\times 2}=\Nrd_A(y) F^{\times 2}$.
 Therefore, as in \cite[\S7]{Knus:1998}, we may make the following definition.  The \emph{determinant of $(A,\sigma)$}, denoted $\det(A,\sigma)$, is the square class of the reduced norm of an arbitrary  alternating unit, that is
$\det(A,\sigma)= \mathrm{Nrd}_A(x)\cdot F^{\times2}$  in  $F^\times/ F^{\times2}$  for any $x\in \Alt(A,\sigma)\cap A^\times.$

\begin{prop}[{\cite[(7.5)]{dolphin:orthpfist}}] \label{pfistinv} Let  $(Q_1,\tau_1), \ldots, (Q_n,\tau_n)$ be $F$-algebras with orthogonal involution, $b_\ell=\det(Q_\ell,\tau_\ell)$ for $\ell=1,\ldots, n$ and $(A,\s)=(Q_1,\tau_1)\otimes\ldots\otimes(Q_n,\tau_n)$.
 Then $(A,\s)$ is anisotropic if and only if $\pff{b_1,\ldots, b_n}$ is anisotropic.  
\end{prop}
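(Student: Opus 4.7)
The plan is to proceed by induction on $n$. The base case $n=1$ reduces to showing that an $F$-quaternion algebra with orthogonal involution $(Q_1,\tau_1)$ is anisotropic if and only if $\pff{b_1}=\qf{1,b_1}^h_{(F,\id)}$ is anisotropic. Applying \cref{scalinginD}$(i)$ to the canonical (symplectic) involution $\gamma_1$ of $Q_1$ and $\tau_1$, I would write $\tau_1=\Int(u)\circ\gamma_1$ for some $u\in\Sym(Q_1,\gamma_1)$; since $\Alt(Q_1,\gamma_1)=F$ and $\tau_1$ is orthogonal, such a $u$ must lie outside $F$. The identity $\Alt(Q_1,\tau_1)=u\Alt(Q_1,\gamma_1)=uF$ from the proof of \cref{scalinginD} then yields $\det(Q_1,\tau_1)\equiv u^2\bmod F^{\times 2}$. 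In the division case, anisotropy is automatic and the characteristic $2$ identity $u^2+c^2=(u+c)^2$ shows that $u^2\in F^{\times 2}$ would force the nilpotent $u+c$ to vanish, contradicting $u\notin F$; in the split case, $(Q_1,\tau_1)\simeq\Ad(\qf{\alpha,\beta}^h)$ with $\alpha\beta\equiv\det(Q_1,\tau_1)\bmod F^{\times 2}$, and \cref{prop:sympalt}$(a)$ together with the classical fact that $\qf{\alpha,\beta}^h$ is isotropic in characteristic $2$ iff $\alpha\beta\in F^{\times 2}$ closes the base case.

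For the inductive step, set $(A',\s')=(Q_1,\tau_1)\otimes\cdots\otimes(Q_{n-1},\tau_{n-1})$, $\pi'=\pff{b_1,\ldots,b_{n-1}}$ and $\pi=\pi'\otimes\pff{b_n}$, so that $(A,\s)=(A',\s')\otimes(Q_n,\tau_n)$. The central tool I would deploy is the realisation of $(A,\s)$ as an adjoint $\Ad(\varphi)$ of a hermitian form $\varphi$ over a quaternion division algebra with involution $(D_0,\theta_0)$ Brauer-equivalent to $A$, together with \cref{prop:sympalt}$(d)$, which identifies this adjoint with the tensor product of the hermitian adjoint presentations of the individual factors. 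Under this translation, the claimed equivalence becomes an anisotropy statement for a ``hermitian Pfister form'' over $(D_0,\theta_0)$, amenable to the inductive hypothesis applied to $(A',\s')$ after peeling off the last factor via the natural embedding of $(A',\s')$ into $(A,\s)$.

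The main obstacle is controlling the interaction between tensor factors of $(A,\s)$ and the ``slots'' of the Pfister form $\pi$: a Pfister form is round (any isotropy forces metabolicity), but no such a priori round property is available for $(A,\s)$. To bridge this I would pass to a generic splitting extension (for instance the purely inseparable extension $K=F(\sqrt{b_n})$, over which $\pff{b_n}_K$ becomes isotropic, so $\pi_K$ is metabolic and, by the base case applied over $K$, $(Q_n,\tau_n)_K$ and hence $(A,\s)_K$ become isotropic) and use descent from $K$ to $F$: isotropy of $(A,\s)_K$ combined with anisotropy of the residue of $\pi$ at the appropriate place forces an isotropy relation for $(A,\s)$ over $F$, while conversely anisotropy of $\pi$ and the inductive hypothesis force anisotropy of $(A,\s)$ via a Cassels--Pfister style subform argument. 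The technical core is to keep track of the determinant invariants $\det(Q_\ell,\tau_\ell)$ through the tensor decomposition via equation~\eqref{alt}; this linkage is genuinely characteristic $2$ in nature, since in characteristic different from $2$ the role played here by alternating elements would instead be played by discriminants of the adjoint quadratic forms.
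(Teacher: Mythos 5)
The paper itself offers no proof of this proposition: it is imported verbatim from \cite[(7.5)]{dolphin:orthpfist}, where it is one of the main theorems and its proof rests on first showing that a totally decomposable \emph{orthogonal} involution on a split algebra is adjoint to the bilinear Pfister form $\pff{b_1,\ldots,b_n}$, together with a careful analysis over splitting fields. Your base case $n=1$ is essentially correct: writing $\tau_1=\Int(u)\circ\gamma_1$ with $u\in\Sym(Q_1,\gamma_1)\setminus F$, one indeed has $u^2=\Nrd_{Q_1}(u)\in F$ and $b_1=u^2F^{\times2}$, the division case follows from the nilpotence of $u+c$, and the split case from $(Q_1,\tau_1)\simeq\Ad(\qf{\alpha,\beta}^h_{(F,\id)})$ with $\alpha\beta F^{\times2}=b_1$.

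The inductive step, however, contains two genuine gaps. First, your reduction hinges on presenting $(A,\s)$ as $\Ad(\varphi)$ for a hermitian form over a \emph{quaternion} division algebra $(D_0,\theta_0)$ Brauer-equivalent to $A$; this is false in general, since $A=Q_1\otimes_F\cdots\otimes_F Q_n$ can be a division algebra of degree $2^n$ (so the underlying division algebra need not be quaternion), and the entire ``hermitian Pfister form'' translation collapses at that point. Second, the splitting-and-descent scheme does not establish either implication: passing to $K=F(\sqrt{b_n})$ makes both $\pff{b_1,\ldots,b_n}_K$ metabolic and $(A,\s)_K$ isotropic, but isotropy over an extension never descends, so this yields no information about $(A,\s)$ over $F$; and the converse direction (anisotropy of the Pfister form forces anisotropy of $(A,\s)$) is waved at via a ``Cassels--Pfister style subform argument'' for which no subform or representation theorem for algebras with involution is available at this level of generality --- supplying such a tool is essentially the content of the cited theorem, not a routine step. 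As written, the hard direction of the equivalence for $n\geq 2$ is asserted rather than proved.
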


\begin{thm}\label{totdecomp}
Let $D$ be a totally decomposable $F$-division algebra. Then $$u^+_d(D)=\frac{[F:F^2]}{\deg(D)}\,.$$
\end{thm}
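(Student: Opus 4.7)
The plan is to establish the lower bound $u_d^+(D) \geq [F:F^2]/\deg(D)$; together with \cref{directbound} this yields equality. Write $\deg(D) = 2^n$ and assume first that $[F:F^2] = 2^m$ is finite (the infinite case follows by the same construction applied to arbitrarily large $2$-independent subsets). The idea is to build an $F$-algebra with orthogonal involution $(A,\sigma)$ such that $A\sim_B D$, $\coind(A) = 2^{m-n}$ and $(A,\sigma)$ is direct; its adjoint hermitian form is then a direct hermitian form over $(D,\theta)$ of dimension $2^{m-n}$.

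Fix a decomposition $D \simeq Q_1 \otimes \ldots \otimes Q_n$ with $Q_i \simeq [c_i, a_i)_F$, and let $j_i \in Q_i$ be a generator satisfying $j_i^2 = a_i$. Setting $\tau_i = \Int(j_i) \circ \gamma_i$, with $\gamma_i$ the canonical symplectic involution on $Q_i$, gives an orthogonal involution on $Q_i$ with $\Alt(Q_i,\tau_i) = j_iF$ and hence $\det(Q_i,\tau_i) = \Nrd_{Q_i}(j_i)F^{\times 2} = a_iF^{\times 2}$. As $D$ is division, $(D, \tau_1 \otimes \ldots \otimes \tau_n)$ is anisotropic, so \cref{pfistinv} forces $\pff{a_1,\ldots,a_n}$ to be anisotropic, which in turn forces $a_1,\ldots,a_n$ to be $2$-independent in $F/F^2$ (by the classical characterization of anisotropy of bilinear Pfister forms in characteristic $2$). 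Extend $a_1, \ldots, a_n$ to a $2$-basis $a_1,\ldots,a_m$ of $F/F^2$. For $i \in \{n+1, \ldots, m\}$, the algebra $Q_i := [0, a_i)_F$ contains an idempotent and is therefore split; the same recipe $\tau_i = \Int(j_i) \circ \gamma_i$ produces an orthogonal involution with $\det(Q_i,\tau_i) = a_iF^{\times 2}$.

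Let $A = Q_1 \otimes \ldots \otimes Q_m$ and $\sigma = \tau_1 \otimes \ldots \otimes \tau_m$. Then $A \sim_B D$ (the extra factors being split), $\deg(A) = 2^m$ and $\coind(A) = 2^{m-n}$. Since $a_1, \ldots, a_m$ form a $2$-basis, $\pff{a_1, \ldots, a_m}$ is anisotropic, so \cref{pfistinv} gives that $(A,\sigma)$ is anisotropic. To obtain directness, fix a separable algebraic splitting field $K$ of $D$ (and hence of $A$). Since $K/F$ is separable in characteristic $2$, one has $K = F \cdot K^2$, whence $K = K^2(a_1,\ldots,a_m)$; combined with $[K:K^2] = [F:F^2] = 2^m$ from \cite[(1.3)]{Baeza:uinv2}, the $a_i$ form a $2$-basis of $K/K^2$. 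Hence $\pff{a_1,\ldots,a_m}_K$ is anisotropic, and \cref{pfistinv} applied over $K$ yields that $(A,\sigma)_K$ is anisotropic. \cref{sepdir} then gives that $(A,\sigma)$ is direct; \cite[(4.2)]{Knus:1998} together with \cref{prop:sympalt}$(c)$ identifies $(A,\sigma)$ with the adjoint of a direct hermitian form $\varphi$ over $(D,\theta)$ of dimension $\coind(A) = 2^{m-n}$, completing the proof.

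The main obstacle is to verify that tensor products of orthogonal involutions on quaternion algebras remain orthogonal in characteristic $2$ (so that \cref{pfistinv} and its determinant invariant apply) and that the determinant computation for each factor is correct. The classical facts that a bilinear Pfister form $\pff{a_1,\ldots,a_m}$ is anisotropic if and only if $a_1,\ldots,a_m$ are $2$-independent in $F/F^2$, and that a $2$-basis of $F$ remains a $2$-basis of any separable algebraic extension of $F$, will also need to be invoked from the literature.
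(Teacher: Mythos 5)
Your proof is correct and follows essentially the same route as the paper: lower bound via \cref{directbound}, then padding the quaternion decomposition of $D$ with split quaternion factors whose orthogonal involutions have determinants completing $\det(Q_1,\tau_1),\ldots,\det(Q_n,\tau_n)$ to a $2$-basis of $F$, and applying \cref{pfistinv} and \cref{invint}. If anything, you are more explicit than the paper on the one delicate point, namely that $(A,\sigma)$ is \emph{direct} and not merely anisotropic (via anisotropy of $\pff{a_1,\ldots,a_m}_K$ over a separable splitting field $K$ and \cref{sepdir}), a step the paper leaves implicit in its appeal to \cref{invint}.
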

\begin{proof}
%If $D$ is a field then see \cref{splitcase}. Assume $D$ is not a field and
Let $Q_1,\ldots,Q_m$ be $F$-quaternion division algebras such that $$D\simeq Q_1\otimes_F\ldots\otimes_F Q_m$$ and let $a_1,\ldots, a_m\in F$ and $b_1,\ldots, b_m\in F^\times$ be such that for $\ell=1,\ldots,m$ we have $Q_\ell\simeq [a_\ell,b_\ell)_F$. Further let  
 $j_\ell\in Q^\times$ be such that $j_\ell^2=b_\ell$ for $\ell=1,\ldots,m$.
 If  $[F:F^2]\leq \infty$ let $n\in \nat$ be such that $[F:F^2]=2^n$. In this case 
we already have $u_d^+(D)\leqslant  \frac{2^n}{\deg(D)} $ by \cref{directbound}. Otherwise take  $n\in \nat$ with $n> m$ and arbitrarily large.

First we show that the bilinear Pfister form $\varphi=\pff{b_1,\ldots, b_m}$ over $F$ is anisotropic, which in particular implies $m\leqslant n$.
For $\ell=1,\ldots, m$, let $\gamma_\ell$ be the canonical involution on $Q_\ell$ and let  $\tau_\ell=\Int(j_\ell)\circ\gamma_\ell$. As $j_\ell\in \Alt(Q_\ell,\tau_\ell)$ by (\ref{alt}) we have $\det(Q_\ell,\tau_\ell)=b_\ell$ for all $\ell=1,\ldots, m$. 
Let $(D,\tau)=\bigotimes_{\ell=1}^m(Q_\ell,\tau_\ell)$.
As 
$D$ is division $(D,\tau)$ is anisotropic.
 Therefore by \cref{pfistinv} we have that $\varphi$ is anisotropic.
%
%
%
% Assume $\varphi$ is isotropic. Then there exists $x_1,\ldots, x_m, y_1,\ldots, y_m\in F$ not all zero such that 
%$$(x_1^2+b_1y_1^2)\cdot\ldots \cdot(x_1^2+b_my_m^2)=0\,.$$
%Then we have that  
%$$ 0=(x_1+j_1y_1)^2\cdot\ldots \cdot(x_1+j_my_m)^2\in D \,.$$
%This gives a contradiction as $D$ is division. Therefore $\varphi$ is anisotropic. 
If $m=n$ then we are done as $\mathrm{coind}(D)=1$ and hence by \cref{invint}  $$u^+_d(D)\geqslant 1 = \frac{2^n}{\deg(D)}\,.$$
Otherwise, since  $[F:F^2]=2^n$ or $\infty$, we can   find $b_{m+1},\ldots, b_{n}\in F^\times$ such that the bilinear Pfister form $\psi=\pff{b_1,\ldots, b_n}$ over $F$ is anisotropic and $F=F^2(b_1,\ldots, b_n)$. 

For $\ell=m+1,\ldots, n$ let $Q_\ell=M_2(F)$  and $j_\ell\in Q_\ell$ such that $j_\ell^2=b_\ell$. For $\ell=m+1,\ldots, n$, let $\gamma_\ell$ be the canonical involution on $Q_\ell$ and let  $\tau_\ell=\Int(j_\ell)\circ\gamma_\ell$. As 
above we have  $\det(Q_\ell,\tau_\ell)=b_\ell$ for all $\ell=m+1,\ldots, n$. Therefore the $F$-algebra with involution $(A,\s)=\bigotimes_{\ell=1}^{n}(Q_\ell,\tau_\ell)$ is anisotropic by the anisotropy of $\psi$ and \cref{pfistinv}. 

Finally,  as $D$ is division and  $Q_\ell$ is split for all $\ell=m+1,\ldots, n$ we have $A\sim_B D$ and 
$$\textrm{coind}(A) = \frac{2^n}{2^m} = \frac{2^n}{\deg(D)}\,.$$
Therefore $u^+_d(D)\geqslant \frac{2^n}{\deg(D)}$ by \cref{invint}, and hence the result.
\end{proof}

Note that division algebras in characteristic $2$ that are not totally decomposable were constructed in \cite[\S3]{rowen:indecomp}.

\section{$u$-invariants of quaternion algebras}\label{quatalgs}

In this section we collect several results on our invariants for quaternion algebras and determine them exactly when the degree of the field extension $F/F^2$ is $2$. This in particular includes both local and global fields of characteristic $2$.

Recall first that every $F$-quaternion algebra $Q$ has an associated quadratic form over $F$.
  By considering $Q$ as an $F$-vector space, we can view $(Q,\mathrm{Nrd}_Q)$ as a $4$-dimensional quadratic  form on $F$, called the \emph{norm form of $Q$}. 
 %Further, $(Q,\mathrm{Nrd}_Q)$ is a $2$-fold Pfister form and $Q$ is division if and only if $(Q,\Nrd_Q)$ is anisotropic (see \cite[(12.5)]{Elman:2008}).
If $Q\simeq[a,b)_F$ for some $a\in F$ and $b\in F^\times$, we have $(Q,\Nrd_Q)\simeq\pfr{b,a}$.

\begin{prop}[{\cite[(12.5)]{Elman:2008}}]\label{lemma:splitting}
Let $Q_1$ and $Q_2$ be quaternion $F$-algebras and let $\pi_i=(Q_i,\Nrd_{Q_i})$ for $i=1,2$. 
 Then
 $Q_1\simeq Q_2$
if and only if  $\pi_1\simeq \pi_2$.
In particular, $Q_i$ is division if and only if $\pi_i$ is anisotropic.
\end{prop}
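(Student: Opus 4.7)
The forward direction of the main equivalence is immediate: any $F$-algebra isomorphism $\phi: Q_1 \to Q_2$ preserves reduced characteristic polynomials (these are defined directly from the algebra structure; see \cite[\S16.1]{pierce:1982}), hence preserves reduced norms, so $\phi$ is itself an isometry from $\pi_1$ to $\pi_2$. The ``in particular'' clause is equally direct: for an $F$-quaternion algebra $Q$ with canonical involution $\gamma$, the identity $\Nrd_Q(x) = \gamma(x) x$ shows that $\pi_i$ is anisotropic if and only if $Q_i$ has no nonzero element $x$ with $\gamma(x) x = 0$, which is equivalent to $Q_i$ having no nonzero zero divisors, i.e., to $Q_i$ being division (using that a quaternion algebra is always split or division).

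For the harder reverse direction, suppose $\pi_1 \simeq \pi_2$. If $\pi_1$ is isotropic, so is $\pi_2$, and by the ``in particular'' clause both $Q_1$ and $Q_2$ are split, whence $Q_1 \simeq Q_2 \simeq M_2(F)$. Assume now that both norm forms are anisotropic. Write $Q_1 \simeq [a, b)_F$ with $F$-quaternion basis $(1, i_1, j_1, i_1 j_1)$; the decomposition $Q_1 = F(i_1) \oplus F(i_1) j_1$ yields by direct computation $\pi_1 \simeq [1,a] \perp b[1,a] \simeq \pfr{b,a}$, where $[1,a]$ is the norm form of the Artin--Schreier extension $F(i_1)/F$. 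The plan is to locate inside $Q_2$ an isomorphic Artin--Schreier subfield $F(e)$ with $e^2 + e = a$. To this end, transfer the 2-dimensional subform $[1,a] \hookrightarrow \pi_1$ through the isometry to obtain $\xi_1, \xi_2 \in Q_2$ with $\Nrd_{Q_2}(\xi_1) = 1$, $\Nrd_{Q_2}(\xi_2) = a$, and polar-form value $\Trd_{Q_2}(\gamma(\xi_1) \xi_2) = 1$. Setting $e := \gamma(\xi_1) \xi_2$ gives $\Nrd_{Q_2}(e) = a$ and $\Trd_{Q_2}(e) = 1$, so $e^2 + e = a$. The standard presentation of $Q_2$ over $F(e)$ then yields $Q_2 \simeq [a, b')_F$ and $\pi_2 \simeq [1,a] \perp b'[1,a]$ for some $b' \in F^\times$.

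To conclude, compare the isometries $[1,a] \perp b[1,a] \simeq \pi_1 \simeq \pi_2 \simeq [1,a] \perp b'[1,a]$: Witt cancellation on the nonsingular form $[1,a]$ gives $b[1,a] \simeq b'[1,a]$, so $b'/b$ is a similitude factor of $[1,a] = N_{F(e)/F}$ and therefore equals $N_{F(e)/F}(\alpha)$ for some $\alpha \in F(e)^\times$. Replacing the second generator $j$ of $Q_2 \simeq [a, b')_F$ by $\alpha^{-1} j$ preserves the conjugation relation with $e$ (since $\alpha \in F(e)$ commutes with $e$) and produces a square $b'/N_{F(e)/F}(\alpha) = b$, giving $Q_2 \simeq [a, b)_F \simeq Q_1$. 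The main obstacle is the construction of the element $e$: representing $a$ by $\pi_2$ is not sufficient on its own, since this controls only the norm and not the trace of the candidate element; one must transport the full 2-dimensional subform $[1,a]$, rather than just the scalar $a$, in order to ensure that $e$ has characteristic polynomial exactly $X^2 + X + a$.
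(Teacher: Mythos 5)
Your argument is correct. Note that the paper does not prove this statement at all --- it simply cites \cite[(12.5)]{Elman:2008} --- so what you have supplied is a self-contained proof of a quoted fact. Your route is the classical ``generators and relations'' one: decompose $Q_1=F(i_1)\oplus F(i_1)j_1$ to get $\pi_1\simeq[1,a]\perp b[1,a]$, transport the nonsingular subform $[1,a]$ through the isometry to manufacture an element $e\in Q_2$ with $\Trd_{Q_2}(e)=1$ and $\Nrd_{Q_2}(e)=a$ (hence $e^2+e=a$ by Cayley--Hamilton), realise $Q_2\simeq[a,b')_F$ over the subfield $F(e)$, and finish by Witt cancellation plus the multiplicativity (roundness) of the norm form $[1,a]=N_{F(e)/F}$ to adjust $b'$ to $b$. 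Every step checks out in characteristic $2$: the polar form of $\Nrd_Q$ is indeed $(x,y)\mapsto\Trd_Q(\gamma(x)y)$, so the transported basis vectors give exactly the trace condition you need; Witt cancellation applies because $[1,a]$ is nonsingular; and the verification $(\alpha^{-1}j)^2=N_{F(e)/F}(\alpha)^{-1}b'$ is as you say. Your closing remark correctly identifies the one genuinely delicate point --- representing $a$ controls only the norm, not the trace, so one must transport the full binary subform. By contrast, the proof in the cited source is structural rather than computational (it identifies the quaternion algebra from its norm form via the Clifford algebra, $C(\Nrd_Q)\simeq M_2(Q)$, and invokes Wedderburn); your version is longer but more elementary and makes the characteristic-$2$ bookkeeping explicit, which fits the spirit of this paper.
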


 %These are similar to results shown in characteristic different from $2$ in \cite[\S5]{bechermah:uinvar}.
There is a well-known relationship between nonsingular quadratic forms and alternating hermitian forms over a quaternion algebra, from which we derive the following result. 

\begin{prop} \label{prop:sympu}
Let $Q$ be an $F$-quaternion division algebra. Then
$$u^-(Q) \leqslant\frac{1}{4} \widetilde{u}(F)\,.$$
\end{prop}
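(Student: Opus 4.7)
The plan is to associate to any anisotropic alternating hermitian form $\psi$ over $(Q,\gamma)$ a nonsingular anisotropic quadratic form over $F$ whose dimension is four times that of $\psi$, and then conclude by counting. By \cref{noinv}, $u^-(Q)$ is insensitive to the choice of involution on $Q$, so I may work with the canonical symplectic involution $\gamma$; the essential feature this provides is that $\Alt(Q,\gamma)=F$, so that for any alternating hermitian form $\psi=(V,h)$ over $(Q,\gamma)$ the value $h(x,x)$ automatically lies in $F$.

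Viewing $V$ as an $F$-vector space of dimension $4\dim_Q(V)$, I would define $q\colon V\to F$ by $q(x)=h(x,x)$. The hermitian axioms together with the computation $q(xa)=\gamma(a)q(x)a=a^2q(x)$ for $a\in F$ and a short polarisation give that $q$ is a quadratic form over $F$ with polar bilinear form $b(x,y)=h(x,y)+\gamma(h(x,y))=\Trd_Q(h(x,y))$. Anisotropy of $q$ is immediate from anisotropy of $\psi$: if $q(x)=0$ and $x\ne 0$ then $h(x,x)=0$, contradicting the hypothesis on $\psi$.

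The one point I expect to require some care is the nonsingularity of $q$, i.e.~nondegeneracy of $b$. Since $\psi$ is anisotropic it is nondegenerate (any nonzero element of the radical would be isotropic), so for every $x\ne 0$ the right $Q$-linear map $V\to Q$ given by $y\mapsto h(x,y)$ is nonzero, and since $Q$ is a division algebra this map is surjective. The reduced trace $\Trd_Q\colon Q\to F$ is a nonzero $F$-linear map and therefore also surjective, so there is some $y$ with $b(x,y)=\Trd_Q(h(x,y))\ne 0$. Combining these observations, $q$ is an anisotropic nonsingular quadratic form over $F$ of dimension $4\dim_Q(\psi)$, which gives $4\dim_Q(\psi)\leq\widetilde{u}(F)$; taking the supremum over all such $\psi$ yields $u^-(Q)\leq\tfrac{1}{4}\widetilde{u}(F)$.
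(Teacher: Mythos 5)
Your proposal is correct and takes essentially the same route as the paper: both work with the canonical involution $\gamma$ (so that $\Alt(Q,\gamma)=F$) and associate to an anisotropic alternating hermitian form $(V,h)$ over $(Q,\gamma)$ the quadratic form $x\mapsto h(x,x)$ over $F$, of dimension $4\dim_Q(V)$. The only difference is that the paper cites \cite[(4.1) and (4.2)]{dolphin:totdecompsymp} for the anisotropy and nonsingularity of this form, whereas you verify these facts directly, and your verification (in particular the nondegeneracy argument via surjectivity of $y\mapsto h(x,y)$ and of $\Trd_Q$) is sound.
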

\begin{proof}
Let $\gamma$ be the canonical involution on $Q$ and $\varphi=(V,h)$ be  an anisotropic alternating hermitian form over $(Q,\gamma)$. Then by \cite[(4.1) and (4.2)]{dolphin:totdecompsymp}, if we consider $V$ as a vector space over $F$, for the map $q_h:x\mapsto h(x,x)$ for $x\in V$ the pair  $(V,q_h)$ is  an anisotropic nonsingular  quadratic form over $F$ of dimension $4\cdot\dim_D(\varphi)$.  
Hence $u^-(Q) \leqslant\frac{1}{4} \widetilde{u}(F)$.
\end{proof}

\begin{cor}\label{+lessthatsq}
Let $Q$ be an $F$-quaternion division  algebra. Then $$u^+(Q)\leqslant [F:F^2]\quad \textrm{and} \quad u(Q)\leqslant \frac{3}{2}[F:F^2]\,.$$
\end{cor}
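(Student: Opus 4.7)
The plan is to chain together the inequalities already established in the paper, with the only external ingredient being a known bound on the nonsingular $u$-invariant of $F$. Recall that $Q$ has $\deg(Q)=2$ and is not a field, so every result previously proved (including \cref{bounds}) applies.

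First I would bound each of $u^+_d(Q)$ and $u^-(Q)$ separately. Since $\deg(Q)=2$, \cref{directbound} immediately gives $u^+_d(Q)\leq \frac{1}{2}[F:F^2]$. For $u^-(Q)$, I would combine \cref{prop:sympu}, which reads $u^-(Q)\leq \frac{1}{4}\widetilde{u}(F)$, with Baeza's classical bound $\widetilde{u}(F)\leq 2[F:F^2]$ from \cite[(1.2)]{Baeza:uinv2} (the companion of the equality $u^+(F)=[F:F^2]$ already used in \cref{splitcase}). This yields $u^-(Q)\leq \frac{1}{2}[F:F^2]$.

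Now the first inequality follows from \cref{prop:start}$(1)$:
\[
u^+(Q)\;=\;u^+_d(Q)+u^-(Q)\;\leq\; \tfrac{1}{2}[F:F^2]+\tfrac{1}{2}[F:F^2]\;=\;[F:F^2].
\]
For the second inequality I would feed this into \cref{bounds}, which is valid since $Q$ is not a field:
\[
u(Q)\;\leq\; u^+(Q)+u^+_d(Q)\;\leq\; [F:F^2]+\tfrac{1}{2}[F:F^2]\;=\;\tfrac{3}{2}[F:F^2].
\]

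There is essentially no obstacle here beyond properly invoking Baeza's bound on $\widetilde{u}(F)$; once that is in place, the corollary is a purely mechanical consequence of \cref{prop:start}$(1)$, \cref{bounds}, \cref{directbound} and \cref{prop:sympu}, specialised to $\deg(Q)=2$.
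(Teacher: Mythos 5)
Your proof is correct and follows essentially the same route as the paper: bound $u^-(Q)\leq\frac{1}{2}[F:F^2]$ via \cref{prop:sympu} and Baeza's bound $\widetilde{u}(F)\leq 2[F:F^2]$, bound $u^+_d(Q)\leq\frac{1}{2}[F:F^2]$, combine via \cref{prop:start}$(1)$, and finish with \cref{bounds}. The only cosmetic difference is that you cite \cref{directbound} for the bound on $u^+_d(Q)$ where the paper cites the equality from \cref{totdecomp}; either suffices.
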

\begin{proof}
By \cite[(1.2, $(d)$)]{Baeza:uinv2}, we have $\widetilde{u}(F)\leqslant 2[F:F^2]$. Therefore by \cref{prop:sympu} we have $u^-(Q)\leqslant \frac{1}{2}[F:F^2]$. By \cref{totdecomp} we have $u_d^{+}(Q)=\frac{1}{2}[F:F^2]$. Hence $u^+(Q)\leqslant [F:F^2]$ by \cref{prop:start}, $(1)$. The inequality involving $u(Q)$ follows by \cref{bounds}.
\end{proof}

By  \cite[(1.2, $(d)$)]{Baeza:uinv2} we have $u(F)\leqslant 3[F:F^2]$. Given this, \cref{+lessthatsq} and \cref{directbound} we ask the following question.

\begin{question}\label{qu}
Do we have 
 $$u(D)\leqslant \frac{3[F:F^2]}{\deg(D)}$$
 for $D$ an arbitrary  $F$-division algebra?
\end{question}

Note by \cref{bounds} and  \cref{directbound},  showing that  $u^-(D)\leqslant \frac{[F:F^2]}{\deg(D)}$ for  an arbitrary  $F$-division algebra $D$ would give a positive answer to \cref{qu}.

We now recall the definition of the Arf invariant of a nonsingular quadratic form over an $F$-division algebra with involution $(D,\theta)$, introduced in \cite{Tits:genquadforms}.
We denote the central simple $F$--algebra of $n\times n$ matrices over $D$  by $M_n(D)$. For a matrix $M\in M_n(D)$, let $M^t$ denote the transpose of $M$ and let $M^\ast$ denote the image of $M$  under the $F$--involution on $M_n(D)$ given by
$$\left((a_{ij})_{1\leqslant i, j\leqslant n}\right)^\ast= (\theta(a_{ij})_{1\leqslant i, j\leqslant n})^t\,.$$
%Note that as $I_n\in \Alt(M_n(D),\ast)$, this involution is symplectic by \cite[(2.6)]{Knus:1998}. 

Let $\rho=(V,q)$ be an $n$--dimensional nonsingular quadratic form 
over   $(D,\theta)$. By  \cite[Chapter 1, (5.1.1)]{{Knus:1991}} one can find a matrix  $M\in M_n(D)$ such that $q:V\rightarrow D/\Alt(D,\theta)$ is given by $$(x_1,\ldots, x_n)\mapsto (\theta(x_1),\ldots, \theta(x_n))M(x_1,\ldots, x_n)^t +\Alt(D,\theta)\,.$$ 
%We call $M$ a \emph{matrix associated to $(V,q$}.  
 If $(V,h)$ is the polar form of $\rho$ then the map   $h:V\times V\rightarrow D$ is given by $$(x_1,\ldots, x_n)\times (y_1,\ldots, y_n)\mapsto (\theta(x_1),\ldots, \theta(x_n))(M+M^\ast)(y_1,\ldots, y_n)^t\,,$$
(see  \cite[Chapter 1, (5.3)]{{Knus:1991}}).
% and we say $M+M^\ast$ is a \emph{matrix associated to the polar of $(V,[k])$}. 
Let $N=M+M^\ast$ be a matrix associated to the polar form of $\rho$. As $\rho$ is nonsingular we have that $N$ is invertible and that at least one of  $\deg(D)$ or $\dim_D(\rho)$ is even. Let $2m=\deg(D)\cdot \dim_D(\rho)$.  We denote the set $\{a^2+a\mid a\in F\}$ by $\wp(F)$. 
The \emph{Arf invariant of $\rho$} is then defined as the class in ${F}/\wp{F}$ given by $$\Srd_{M_n(D)}(N^{-1}\cdot M) + \frac{m(m-1)}{2} +\wp(F).$$
We denote this class by $\Delta(\rho)$. By \cite[Corollaire 4]{Tits:genquadforms}, $\Delta(\rho)$ depends only on the isometry class of $\rho$ and not on the choice of $M$. 
Note  that for all $\lambda\in F^\times$ we have $\Delta(\rho)=\Delta(\lambda\rho)$. 

For a quaternion algebra $Q$ we have that $\Srd_Q=\Nrd_Q$. 
Further, it is easy to show that for $a_1,\ldots, a_n\in Q\setminus\Sym(Q,\gamma)$ we have 
$$ \Delta(\qf{a_1,\ldots, a_n}_{(Q,\gamma)})=\sum_{i=1}^{n}\Nrd_Q(a_i)+\wp(F) \,. $$

We now fix an $F$-quaternion division algebra $Q$ and    $a\in F$ and $b\in F^\times$ such that  $Q=[a,b)_F$ and let $\gamma$ be the canonical involution on $Q$.

\begin{prop}\label{trivarf}
For any  $1$-dimensional nonsingular quadratic form $\rho$ over $(Q,\gamma)$,  $\Delta(\rho)$ is non-trivial.
\end{prop}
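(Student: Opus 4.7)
The plan is to reduce to the case where the single diagonal entry has reduced trace $1$, and then exploit the fact that $Q$ is a division algebra to force the relevant separable polynomial to be irreducible.

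By \cref{diagstuff}, any $1$-dimensional nonsingular quadratic form over $(Q,\gamma)$ is isometric to $\qf{c}_{(Q,\gamma)}$ for some $c\in Q\setminus\Sym(Q,\gamma)$. Since $\gamma(x)=\Trd_Q(x)-x$, a direct calculation shows $\Sym(Q,\gamma)=\{x\in Q\mid \Trd_Q(x)=0\}$ in characteristic $2$, so the condition on $c$ amounts to $\Trd_Q(c)\in F^{\times}$. Using the invariance $\Delta(\rho)=\Delta(\lambda\rho)$ for $\lambda\in F^{\times}$ (stated just before the proposition) and the fact that $\lambda\qf{c}=\qf{\lambda c}$, I would replace $c$ by $\Trd_Q(c)^{-1}c$ to assume without loss of generality that $\Trd_Q(c)=1$.

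Under this normalisation, the formula recalled right before the statement gives
$$\Delta(\rho)=\Nrd_Q(c)+\wp(F),$$
so it suffices to show $\Nrd_Q(c)\notin\wp(F)$. The element $c$ satisfies its reduced characteristic polynomial $c^{2}-\Trd_Q(c)\,c+\Nrd_Q(c)=c^{2}+c+\Nrd_Q(c)=0$ in $Q$ (using characteristic $2$). Since $\Trd_Q(c)=1\neq 0$, necessarily $c\notin F$ (otherwise $\Trd_Q(c)=2c=0$). Because $Q$ is a division algebra, the subring $F[c]$ is a commutative integral domain of finite dimension over $F$, hence a field, and of degree exactly $2$ over $F$ since $c$ satisfies the displayed quadratic. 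Therefore $X^{2}+X+\Nrd_Q(c)$ is the minimal polynomial of $c$ over $F$; in particular it is irreducible, which is equivalent to $\Nrd_Q(c)\notin\wp(F)$.

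The argument is essentially a one-step reduction together with the classical criterion for an Artin--Schreier polynomial to be irreducible; the only point that requires any care is checking that the scaling $\rho\mapsto\lambda\rho$ really does correspond to $c\mapsto\lambda c$ in the diagonal description, which is immediate from the definition of scalar multiplication on quadratic forms. There is no serious obstacle.
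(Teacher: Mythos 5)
Your proof is correct and follows essentially the same route as the paper's: diagonalise, scale to reduce to $\Trd_Q(c)=1$ using $\Delta(\rho)=\Delta(\lambda\rho)$, and then use that $Q$ is division to conclude $\Nrd_Q(c)\notin\wp(F)$. The paper phrases the last step as a contradiction (if $\Nrd_Q(c)=d^2+d$ then $\Nrd_Q(c+d)=0$, forcing $c=d\in F=\Alt(Q,\gamma)$, contradicting nonsingularity), whereas you phrase it via irreducibility of the minimal polynomial $X^2+X+\Nrd_Q(c)$ of $c$ over $F$; these are the same argument.
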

\begin{proof}
By  \cref{diagstuff}, we have that  $\rho\simeq \qf{x}$ for some $x\in Q\setminus \Sym(Q,\gamma)$.  
Since $x\notin \Sym(Q,\gamma)$ we have that $\Trd_Q(x)\neq 0$. 
Suppose $\Delta(\rho)$ is trivial, that is $\Nrd_Q(x)\in \wp(F)$.
First assume that $\Trd_Q(x)=1$.
 Let $d\in F$ be such that $\Nrd_Q(x)=d^2+d$. Then $\Nrd_Q(x+d)=0$. Since $Q$ is division it follows that $x+d=0$, and hence $x=d\in F$. Since $\Alt(Q,\gamma)=F$,  by \cref{diagstuff} this contradictions the nonsingularity of  $\rho$. 
 Now assume that $\Trd_Q(x)\neq 1$. There exists  an element $\lambda\in F^\times$ such that $\Trd_Q(\lambda x)=1$. 
 As $\Delta(\rho)=\Delta(\lambda \rho)$, it follows from the first part of the proof that $\Delta(\rho)$ is non-trivial.
\end{proof}

 In \cite[\S5]{bechermah:uinvar} it was shown that  in characteristic different from $2$ there always exist $3$-dimensional anisotropic skew-hermitian forms over a quaternion algebra with canonical involution  except in a few exceptional cases.  We now show a similar result in characteristic $2$ for quadratic forms over $(Q,\gamma)$. Here an anisotropic $3$-dimensional  form always exists, and we  give an explicit example.

\begin{prop}\label{Tsuk}
%Let $F$ be a Kaplansky field and $Q$ the unique $F$-quaternion algebra. 
%\item[$(a)$] A $2$-dimensional nonsingular  quadratic form over $(Q,\gamma)$ is isotropic if and only if it has trivial Arf invariant.
Any $3$-dimensional nonsingular  quadratic form over $(Q,\gamma)$ with trivial  Arf invariant is anisotropic. 
\end{prop}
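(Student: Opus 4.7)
The plan is to argue by contradiction, using a Witt decomposition for nonsingular quadratic forms together with \cref{trivarf}. Suppose that $\rho$ is an isotropic $3$-dimensional nonsingular quadratic form over $(Q,\gamma)$ with trivial Arf invariant. Since $\rho$ is nonsingular and isotropic, the Witt decomposition for nonsingular quadratic forms over a division algebra with involution (cf.~\cite[Chapter 1]{Knus:1991} and \cite{dolphin:singuni}) yields an isometry
\[\rho\simeq \rho_h\perp \rho',\]
where $\rho_h$ is a $2$-dimensional nonsingular quadratic form over $(Q,\gamma)$ containing a totally isotropic $1$-dimensional $Q$-subspace (a \emph{hyperbolic plane}), and $\rho'$ is a $1$-dimensional nonsingular form.

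By \cref{diagstuff} we can diagonalise both summands to write $\rho_h\simeq\qf{c_1,c_2}_{(Q,\gamma)}$ and $\rho'\simeq\qf{a}_{(Q,\gamma)}$ with $c_1,c_2,a\in Q\setminus\Sym(Q,\gamma)$. The formula for the Arf invariant of a diagonal form over $(Q,\gamma)$ recalled in the excerpt then gives
\[\Delta(\rho)=\Nrd_Q(c_1)+\Nrd_Q(c_2)+\Nrd_Q(a)+\wp(F)\quad\textrm{and}\quad\Delta(\rho_h)=\Nrd_Q(c_1)+\Nrd_Q(c_2)+\wp(F).\]
A direct computation of $\Srd_{M_2(Q)}(N^{-1}M)$ applied to the standard hyperbolic model (taking $M$ to have its only nonzero entry a $1$ in the upper right corner) shows that $\Delta(\rho_h)=0$. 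Combining the two identities yields $\Delta(\rho)=\Nrd_Q(a)+\wp(F)=\Delta(\qf{a}_{(Q,\gamma)})$, which is nontrivial by \cref{trivarf}, contradicting the hypothesis $\Delta(\rho)=0$.

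The main obstacle is locating or verifying the Witt decomposition in the required generality (nonsingular quadratic forms over an $F$-division algebra with involution in characteristic $2$); once this is in hand, the vanishing of $\Delta$ on the hyperbolic plane is a short matrix computation from the definition, and the remainder of the argument reduces immediately to the one-dimensional anisotropy already established in \cref{trivarf}.
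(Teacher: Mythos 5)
Your argument is correct and is essentially the paper's: the paper cites \cite[Chapter 1, (6.5.3)]{Knus:1991} to write an isotropic $3$-dimensional nonsingular form as $\qf{x,y,y}_{(Q,\gamma)}$ for an arbitrary $y\in Q\setminus\Sym(Q,\gamma)$, whence $\Delta(\rho)=\Delta(\qf{x}_{(Q,\gamma)})$ follows at once from the diagonal formula and \cref{trivarf} finishes. That same reference supplies the Witt-type splitting you flag as the main obstacle, and writing the hyperbolic plane as $\qf{y,y}_{(Q,\gamma)}$ makes its Arf invariant vanish for free, so your separate matrix computation (which does check out) is not needed.
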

\begin{proof}
 Let $\rho$ be a $3$-dimensional nonsingular quadratic form over $(Q,\gamma)$. If $\rho$ is isotropic then
 there exists an $x\in Q\setminus \Sym(Q,\gamma)$
 such that
  $\rho \simeq \qf{x,y,y}$ for all $y\in Q\setminus \Sym(Q,\gamma)$ by \cite[Chapter 1, (6.5.3)]{Knus:1991}. It follows that $\Delta(\rho)=\Delta(\qf{x})$, which is non-trivial by \cref{trivarf}.
\end{proof}
%

%\begin{lemma}\label{lemma:norms}
%Let $x_1,x_2,x_3\in Q\setminus \Sym(Q,\gamma)$. If $\sum_{i=1}^3\Nrd_Q(x_i)$ is not represented by $\pi$ then  the quadratic form $\qf{x_1,x_2,x_3}$ over $(Q,\gamma)$ is anisotropic. 
%\end{lemma}
%\begin{proof}
%Suppose $\qf{x_1,x_2,x_3}$ is isotropic. Then $\qf{x_1,x_2,x_3}\simeq \qf{w}\perp \HH$ for some $w\in Q\setminus \Sym(Q,\gamma)$.  We may scale the quadratic forms by elements in $F^\times$ so that $\Trd_Q(w)=1$. 
%Comparing Arf invariants gives $$\sum_{i=1}^3\Nrd_Q(x_i) = \Nrd_Q(w) \mod \wp(F)\,.$$ 
%We may assume that $w= c_1u+c_2v+ c_3 w$ for some $c_1,c_2,c_3\in F$.  
%Hence 
%for some $d\in F$ we have 
%$$\sum_{i=1}^3\Nrd_Q(x_i) = \Nrd_Q(w) +d+d^2 = \Nrd_Q(w+d)\,.$$
%As $w\notin \Sym(Q,\gamma)=F$ we have $w+d\neq0$ and hence  
%$\Nrd_Q(w+d)$  is represented by $\pi$, as required. 
%\end{proof}

\begin{cor}\label{3}%Let $Q$ be an $F$-quaternion division algebra. Then
 $\widetilde{u}(Q)\geqslant 3$. 
\end{cor}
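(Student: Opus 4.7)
The plan is to exhibit a specific $3$-dimensional nonsingular quadratic form over $(Q,\gamma)$ with trivial Arf invariant, since \cref{Tsuk} then forces it to be anisotropic and thus $\widetilde{u}(Q)\geqslant 3$.

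First I would fix $a\in F$ and $b\in F^\times$ with $Q\simeq[a,b)_F$ and the associated quaternion basis $(1,i,j,k)$. Because $Q$ is division, $F$ must be infinite (by Wedderburn's theorem on finite division rings), and in particular I can choose $\alpha\in F\setminus\{0,1\}$ and set $\beta=1+\alpha\in F^\times$. Next I would consider the diagonal form $\rho=\qf{i,\alpha i,\beta i}_{(Q,\gamma)}$. Since the reduced traces $\Trd_Q(i)=1$, $\Trd_Q(\alpha i)=\alpha$ and $\Trd_Q(\beta i)=\beta$ are all nonzero, all three diagonal entries lie in $Q\setminus\Sym(Q,\gamma)$, so $\rho$ is $3$-dimensional nonsingular by \cref{diagstuff}.

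Finally, using the formula for the Arf invariant over $(Q,\gamma)$ recalled just before \cref{trivarf}, and noting that $\Nrd_Q(\lambda i)=\lambda^2\Nrd_Q(i)=\lambda^2 a$ for $\lambda\in F$, I would compute
$$\Delta(\rho)=\bigl(1+\alpha^2+\beta^2\bigr)a+\wp(F).$$
In characteristic $2$ Frobenius is additive, so $1+\alpha^2+\beta^2=(1+\alpha+\beta)^2$, which vanishes by the choice $\beta=1+\alpha$. Hence $\Delta(\rho)$ is trivial, and \cref{Tsuk} yields the anisotropy of $\rho$ and the required bound.

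The genuine difficulty lies in finding \emph{any} form with trivial Arf invariant: by \cref{trivarf} the reduced norm on $Q\setminus\Sym(Q,\gamma)$ never lands in $\wp(F)$, so no $1$-dimensional choice works, and the anisotropy of the norm form $\pfr{b,a}$ also blocks many natural $3$-dimensional candidates (e.g.\ forms of the shape $\qf{i,i,\alpha j+\gamma k+i}$ have Arf $a+bN(\alpha,\gamma)$, which $\pfr{b,a}$-anisotropy prevents from lying in $\wp(F)$). The clever ingredient is instead to use three $F$-scalar multiples of a single trace-one element so that the sum of norms reduces to $a\cdot(1+\alpha^2+\beta^2)$, which the characteristic-$2$ Frobenius identity forces to zero once $\beta=1+\alpha$.
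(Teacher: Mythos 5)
Your overall strategy is exactly the paper's: exhibit an explicit $3$-dimensional nonsingular quadratic form over $(Q,\gamma)$ with trivial Arf invariant and invoke \cref{Tsuk}. However, your witness does not work, because the form $\rho=\qf{i,\alpha i,(1+\alpha)i}_{(Q,\gamma)}$ is \emph{isotropic}. Indeed, evaluating on the vector $(1,1,1)\in Q^3$ gives $i+\alpha i+(1+\alpha)i=\bigl(1+\alpha+(1+\alpha)\bigr)i=0$. More generally, a form $\qf{\lambda_1x,\lambda_2x,\lambda_3x}$ with $\lambda_\ell\in F^\times$ takes the value $(\lambda_1\mu_1^2+\lambda_2\mu_2^2+\lambda_3\mu_3^2)\,x$ on $(\mu_1,\mu_2,\mu_3)\in F^3$, so it is isotropic whenever the totally singular form $\qf{\lambda_1,\lambda_2,\lambda_3}$ over $F$ is; your choice $\lambda_3=\lambda_1+\lambda_2$ makes $(1,1,1)$ an isotropic vector. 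The cancellation $1+\alpha+\beta=0$ that you engineered to kill the Arf invariant kills the form itself.

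The reason this does not contradict \cref{Tsuk} is that the Arf invariant of $\rho$ is actually $a+\wp(F)$, which is nontrivial since $Q$ is division. The displayed formula $\Delta(\qf{a_1,\ldots,a_n})=\sum\Nrd_Q(a_\ell)+\wp(F)$ is only valid when every $\Trd_Q(a_\ell)$ equals $1$: unwinding the definition with $M=\mathrm{diag}(a_1,\ldots,a_n)$ and $N=M+M^\ast=\mathrm{diag}(\Trd_Q(a_1),\ldots,\Trd_Q(a_n))$, the terms $\binom{n}{2}$ and $\tfrac{m(m-1)}{2}$ cancel and one is left with $\Delta=\sum\Nrd_Q(a_\ell)/\Trd_Q(a_\ell)^2+\wp(F)$. (The unnormalised sum $\sum\Nrd_Q(a_\ell)$ cannot be the right answer in general, since $\qf{a}\simeq\qf{\gamma(c)ac}$ replaces $\Nrd_Q(a)$ by $\Nrd_Q(c)^2\Nrd_Q(a)$; this is also why the proof of \cref{trivarf} first rescales to reduced trace $1$.) For your entries $\lambda i$ each corrected term is $\lambda^2a/\lambda^2=a$, so $\Delta(\rho)=3a+\wp(F)=a+\wp(F)\neq\wp(F)$, consistent with the isotropy just exhibited. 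The paper's example $\qf{i+k,\ i+aj/b,\ i+aj/b+k}$ is built so that all three entries have reduced trace $1$ (note $\Trd_Q(j)=\Trd_Q(k)=0$), which is precisely the situation where the naive norm-sum formula is legitimate, and the cancellation then occurs among the norms rather than among the diagonal entries. So there is a genuine gap: you need a different form, and with the corrected Arf formula the search is more constrained than your discussion suggests.
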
	
\begin{proof}
Fix an $F$-quaternion basis $(1,i,j,k)$ of $Q$ satisfying the relations (\ref{eqnarray:qatbas}).
Take $\rho=\qf{i+k, i+\frac{aj}{b}, i+\frac{aj}{b} +k}\,.$ This form is nonsingular by \cref{diagstuff}, $(b)$.
 We have \begin{eqnarray*}\Nrd_Q\left(i+k\right)&=&a +ab \\
\Nrd_Q\left( i+\frac{aj}{b} \right)&=&a + \frac{a^2}{b}
 \\
\Nrd_Q\left(i+\frac{aj}{b} +k \right)&=& a+ \frac{a^2}{b} +a+ ab\,.
\end{eqnarray*}
Summing these terms together gives $0$, hence $\Delta(\rho)$ is trivial.  It follows that $\rho$ is anisotropic by \cref{Tsuk}.
\end{proof}

\begin{cor}\label{u=3}
If $[F:F^2]=2$ then $u^+(Q)=2$ and $u(Q)=\widetilde{u}(Q)=3$.
\end{cor}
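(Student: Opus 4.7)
The plan is to derive both equalities by pinching each invariant between the bounds we have already established. Under the hypothesis $[F:F^2]=2$, the upper bounds from \cref{+lessthatsq} become $u^+(Q)\leqslant 2$ and $u(Q)\leqslant 3$, and the rest is matching these against lower bounds proved in Sections~\ref{uinvars} and~\ref{quatalgs}.

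For $u^+(Q)=2$: I would apply \cref{+lessthatsq} to get $u^+(Q)\leqslant [F:F^2]=2$, and \cref{orth2} (using that $Q$ is a quaternion division algebra, hence not a field) to get $u^+(Q)\geqslant 2$. Combining these two gives $u^+(Q)=2$.

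For $u(Q)=\widetilde{u}(Q)=3$: \cref{+lessthatsq} gives $u(Q)\leqslant \tfrac{3}{2}[F:F^2]=3$. Since every anisotropic nonsingular quadratic form is in particular an anisotropic quadratic form, the inequality $\widetilde{u}(Q)\leqslant u(Q)$ is immediate from the definitions; hence $\widetilde{u}(Q)\leqslant 3$. On the other hand, \cref{3} provides $\widetilde{u}(Q)\geqslant 3$, namely via the explicit $3$-dimensional anisotropic nonsingular form $\qf{i+k,\,i+\tfrac{aj}{b},\,i+\tfrac{aj}{b}+k}_{(Q,\gamma)}$ of trivial Arf invariant. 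These inequalities force $\widetilde{u}(Q)=u(Q)=3$.

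There is no real obstacle here: the statement is a straightforward consequence of assembling \cref{+lessthatsq}, \cref{orth2}, and \cref{3}. The only mild subtlety worth flagging explicitly in writing is the trivial inequality $\widetilde{u}(Q)\leqslant u(Q)$, which is not stated as a numbered result earlier but follows directly from the definitions of the two invariants in \cref{uinvars}.
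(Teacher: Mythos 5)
Your proposal is correct and follows exactly the paper's own argument: squeeze $u^+(Q)$ between \cref{orth2} and \cref{+lessthatsq}, and squeeze $\widetilde{u}(Q)$ and $u(Q)$ between \cref{3} and \cref{+lessthatsq}, using the immediate inequality $\widetilde{u}(Q)\leqslant u(Q)$. No differences worth noting.
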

\begin{proof}
We have that $u^+({Q})\geqslant 2$ by \cref{orth2} and $u^+(Q)\leqslant [F:F^2]=2$ by \cref{+lessthatsq}. We have that $u({Q})\geqslant 3$ by \cref{3} and $u(Q)\leqslant \frac{3}{2}[F:F^2]=3$ by \cref{+lessthatsq}. Also by \cref{3} we have  $3\leqslant\widetilde{u}(Q)\leqslant u(Q)=3$.
\end{proof}

\begin{remark}
\cref{u=3} shows that the conjectured bound from \cref{qu} is met in the case where $[F:F^2]=2$. This is perhaps surprising, as in the case where $D$ is a field, this bound is often quite poor. Indeed, it is not hard to show that if $[F:F^2]=2$ then $u(F)\leqslant 4$, whereas the bound in \cref{qu} gives $u(F)\leqslant 6$. 
\end{remark}

We call a field $F$ of characteristic $2$ a \emph{local field} if $F\simeq\mathbb{F}_{2^n}((t))$. That is, a Laurent series in one variable over  $\mathbb{F}_{2^n}$, the field with $2^n$ elements for some $n\in \mathbb{N}$. 
 In \cite{tsukchar2} it was shown that $u^+(Q)=2$ in the case where $F$ is a local field. \cref{u=3} gives a different proof of this fact,  as $F=F^2(t)$ and hence $[F:F^2]=2$.
Local fields $F$  also have the property that there is a unique $F$-quaternion division algebra  (this follows easily from, for example,  \cite[Chapt.~ XIV, \S 5, Prop.~12]{serre1979local}). In the next section we further investigate fields with this property.

\section{Kaplansky fields}\label{kapfields}

Following  \cite[\S6]{bechermah:uinvar}, we call a field $F$ a \emph{Kaplansky field} if there is a unique $F$-quaternion division algebra (up to isomorphism). 
In \cite[\S6]{bechermah:uinvar}, the hermitian $u$-invariant was shown to be always $3$ or less for a quaternion algebra over a  Kaplansky field of characteristic different from $2$. In this section, we consider our $u$-invariants for a quaternion algebra $Q$ over a Kaplansky field of characteristic $2$. We show that while  $\widetilde{u}(Q)=3$, $u(Q)$ and $u^+(Q)$ can be arbitrarily large. 

For the rest of this section, assume $F$ is a Kaplansky field and for some $a\in F$ and $b\in F^\times$ let $Q=[a,b)_F$  be the unique $F$-quaternion division algebra and $\gamma$ its canonical involution.  Further, let $\pi=\pfr{b,a}$. Then by \cref{lemma:splitting}, $\pi$ is   the unique anisotropic $2$-fold Pfister form over $F$ (up to isometry).

We first calculate $\widetilde{u}(F)$ and $u^-(Q)$. 

\begin{prop}\label{prop:21}
Let $\rho$ be a  quadratic form   over $F$ of  type $(4,1)$. Then $\rho$ is isotropic. 
\end{prop}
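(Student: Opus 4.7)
Write $\rho \simeq \phi \perp \qf{c}$ with $\phi$ a $4$-dimensional nonsingular quadratic form over $F$ and $c \in F^\times$. Since the totally singular summand $\qf{c}$ contributes only the values $cy^2$, the form $\rho$ is isotropic if and only if either $\phi$ is isotropic or $\phi$ represents $c$. I will argue by contradiction: assume $\phi$ is anisotropic and $c \notin D_F(\phi)$, and aim to derive a contradiction from the uniqueness of $\pi$ imposed by the Kaplansky hypothesis.

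The first step of the plan is to show that $\phi$ must be similar to $\pi$. I would analyse the Arf invariant $\Delta(\phi) \in F/\wp(F)$. When $\Delta(\phi) = 0$, the standard classification of $4$-dimensional nonsingular quadratic forms in characteristic $2$ gives that $\phi$ is similar to a $2$-fold Pfister form, and combining this with \cref{lemma:splitting} and the Kaplansky hypothesis forces $\phi \simeq d\pi$ for some $d \in F^\times$. The case $\Delta(\phi) \neq 0$ should be excluded by passing to the separable quadratic extension $L = F(\wp^{-1}\Delta(\phi))$, where $\phi_L$ acquires trivial Arf invariant and becomes similar to a $2$-fold Pfister form over $L$; a descent argument exploiting the uniqueness of $Q$ under the scalar extension $L/F$ then rules out this case.

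The second step is to show that $\pi$ represents $c/d$, which contradicts $c \notin D_F(\phi)$ since $\phi \simeq d\pi$. By the standard representation property of Pfister forms, $\pi$ represents $c/d$ if and only if the $3$-fold Pfister form $\pi \otimes \pff{c/d}$ is hyperbolic. I would then show that this hyperbolicity is forced by the Kaplansky hypothesis: any anisotropic $3$-fold Pfister form over $F$ would contain an anisotropic $2$-fold Pfister subform that must, by Kaplansky, be isometric to $\pi$, and further analysis (for example via the function field of the Pfister quadric of $\pi$, or via a cohomological comparison of $I_q^3/I_q^4$ with Milnor K-theory) should force the form to be hyperbolic, contradicting the existence of $c/d \notin D_F(\pi)$.

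I expect the main obstacle to be precisely this last point: establishing the hyperbolicity of the relevant $3$-fold Pfister forms over a Kaplansky field in characteristic $2$. This is the genuinely cohomological content of the proposition and is strictly stronger than the mere uniqueness of $Q$; the Arf analysis in the first step is more routine and follows standard patterns in characteristic $2$ quadratic form theory.
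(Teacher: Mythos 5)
Your reduction to ``$\phi$ anisotropic and $\phi$ does not represent $c$'' is fine, but both pillars of your intended contradiction are left unproven, and the first rests on an argument that does not work. In Step 1, the case $\Delta(\phi)\neq 0$ cannot be handled by passing to $L=F(\wp^{-1}(\Delta(\phi)))$: the Kaplansky hypothesis constrains quaternion algebras over $F$ only and says nothing about the quaternion division algebra over $L$ to which $\phi_L$ becomes similar, so there is no evident descent. Worse, it is not clear this case can be ``excluded'' at all. Writing $\phi\simeq g[1,\delta]\perp h[1,\delta']$, anisotropy over a Kaplansky field only forces the norm groups $N_{F(\wp^{-1}(\delta))/F}$ and $N_{F(\wp^{-1}(\delta'))/F}$ to coincide, to have index $2$ in $F^\times$, and to avoid the class of $g^{-1}h$ --- conditions compatible a priori with $\delta+\delta'\notin\wp(F)$. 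In that situation $\phi$ is universal (the two cosets $N$ and $g^{-1}hN$ cover $F^\times$), so $\rho$ is still isotropic, but not because $\phi$ is similar to $\pi$; your plan would simply miss this case. Step 2 you concede outright: you need every anisotropic $2$-fold Pfister form over $F$ to be universal, equivalently every $3$-fold quadratic Pfister form to be hyperbolic, and ``further analysis should force'' is not a proof. This step is in fact fillable without any cohomology: if $\pfr{e,b,a}=[1,a]\perp b[1,a]\perp e[1,a]\perp be[1,a]$ were anisotropic, its subforms $\pfr{b,a}$, $\pfr{e,a}$, $\pfr{be,a}$ would all be anisotropic, hence all isometric to $\pi$ by the Kaplansky hypothesis and \cref{lemma:splitting}; then $[a,b)_F\simeq[a,e)_F$, so $[a,be)_F\sim [a,b)_F\otimes[a,e)_F$ is split, contradicting the anisotropy of $\pfr{be,a}$. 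But as written your proof contains neither this nor a substitute.

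For contrast, the paper's argument sidesteps both difficulties by choosing a different decomposition: scale so that $\rho\simeq c[1,d]\perp e[1,f]\perp\qf{1}$ and pair the singular summand $\qf{1}$ with each binary piece, obtaining two $3$-dimensional subforms of the $2$-fold Pfister forms $\pfr{c,d}$ and $\pfr{e,f}$. If these subforms are anisotropic, both Pfister forms are anisotropic, hence both isometric to $\pi$, and $\rho$ embeds into $\pi\perp\pi$, which is hyperbolic in characteristic $2$ (the diagonal is totally isotropic); a $4$-dimensional totally isotropic subspace must meet the $5$-dimensional image of $\rho$. No Arf-invariant classification of $4$-dimensional forms and no universality of $\pi$ is needed. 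If you wish to keep your decomposition $\rho\simeq\phi\perp\qf{c}$, the honest repair is to prove directly that an anisotropic $4$-dimensional nonsingular form over a Kaplansky field is universal (the norm-group analysis above does this uniformly in $\Delta(\phi)$), rather than to force $\phi$ into the similarity class of $\pi$.
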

\begin{proof}
 We may scale $\rho$ so that $\rho\simeq c[1,d]\perp e[1,f]\perp \qf{1}$ for some $c,d,e,f\in F^\times$.  Let $\psi_1=\qf{1}\perp c[1,d]$ and $\psi_2=\qf{1}\perp e[1,f]$. If either $\psi_1$ or $\psi_2$ are isotropic then so is $\rho$. 

Assume both $\psi_1$ and $\psi_2$ are anisotropic. Note that $\psi_1$ and $\psi_2$ are $3$-dimensional  subforms of the $2$-fold Pfister  forms  $\pi_1= \pfr{c,d}$  and $\pi_2=\pfr{e,f}$ respectively.
For $i=1,2$, if $\pi_i$ is isotropic, then it is hyperbolic, and in particular has a totally isotropic subspace of dimension $2$. Hence any $3$-dimensional subform of $\pi_i$ would be isotropic. Therefore  $\pi_i$ is anisotropic for $i=1,2$. In  particular, we must have that $\pi_i\simeq\pi$ for $i=1,2$ as $\pi$ is the unique anisotropic $2$-fold Pfister form over $F$.  It follows that $\rho$ is a subform of the hyperbolic $3$-fold Pfister form $\pi\perp \pi=\pi_1\perp\pi_2$. 
As $\pi\perp\pi$ has a totally isotropic subspace of dimension $4$ and $\rho$ is of dimension $5$, it follows that $\rho$ is isotropic.
\end{proof}

\begin{cor}\label{nonsingandsymp}
$\widetilde{u}(F)=4$ and $u^-(Q)=1$.
\end{cor}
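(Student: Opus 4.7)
The plan is to prove $\widetilde{u}(F) = 4$ first, after which $u^-(Q) = 1$ will follow immediately from \cref{prop:sympu}. The lower bound $\widetilde{u}(F) \geq 4$ is clear because $\pi = \pfr{b,a}$ is by hypothesis an anisotropic nonsingular quadratic form of dimension $4$ over $F$.

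For the upper bound, I would argue as follows. Since a polar form in characteristic $2$ is alternating, every nonsingular quadratic form over the field $F$ has even dimension, so it suffices to rule out a $6$-dimensional anisotropic nonsingular form $\rho = (V, q)$. Pick $v \in V$ with $e := q(v) \neq 0$; by nondegeneracy of the polar form $h$ of $\rho$, choose $w \in V$ with $h(v, w) = 1$. Then $Fv \oplus Fw$ is a nonsingular $2$-dimensional subspace on which $q$ acts as $[e, q(w)]$, so the Witt decomposition of $\rho$ along this nondegenerate plane gives $\rho \simeq [e, q(w)] \perp \rho'$ with $\rho'$ a $4$-dimensional nonsingular form. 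The restriction of $q$ to $Fv$ is $z \mapsto ez^2$, which is exactly the totally singular diagonal form $\qf{e}$; hence $\qf{e} \perp \rho'$ embeds as a subform of $\rho$. This subform has dimension $5$ and is of type $(4, 1)$, so by \cref{prop:21} it is isotropic, contradicting the anisotropy of $\rho$. Therefore $\widetilde{u}(F) = 4$.

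Finally, \cref{prop:sympu} yields $u^-(Q) \leq \widetilde{u}(F)/4 = 1$, while $u^-(Q) \geq 1$ by \cref{always2} (as $Q$ is not a field), so $u^-(Q) = 1$. The main delicate point is the passage from the hypothetical $6$-dimensional nonsingular form to a $5$-dimensional subform of type $(4, 1)$: one has to verify both the Witt-style splitting off of a hyperbolic plane carrying the value $e$ and the fact that the $1$-dimensional totally singular piece $\qf{e}$ genuinely sits inside the nonsingular block $[e, q(w)]$ as a subform. Both steps are standard in characteristic $2$ quadratic form theory, but making them explicit is what licenses the invocation of \cref{prop:21}, which is the key input from the previous section.
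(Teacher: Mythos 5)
Your argument is correct and follows the paper's own proof: both get the lower bound from the anisotropy of $\pi$, reduce the upper bound to exhibiting a subform of type $(4,1)$ inside any anisotropic nonsingular form of dimension at least $6$ and invoking \cref{prop:21}, and then deduce $u^-(Q)=1$ from \cref{prop:sympu} together with \cref{always2}. The only difference is that the paper simply asserts the existence of the type-$(4,1)$ subform, whereas you spell out the splitting $\rho\simeq[e,q(w)]\perp\rho'$ along a nondegenerate plane and the embedding of $\qf{e}$ into $[e,q(w)]$ --- a correct elaboration of the same step.
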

\begin{proof}
As $\pi$ is anisotropic, we have $\widetilde{u}(F)\geqslant 4$. Let $\rho$ be a nonsingular quadratic form over $F$ of dimension $6$ or more. Then $\rho$  has  a subform of type $(4,1)$. Therefore $\rho$ is isotropic by \cref{prop:21} and hence $\widetilde{u}(F)=4$. 
We have $u^-(Q)\geqslant 1$ by \cref{always2}.  That $u^-(Q)=1$ follows from $\widetilde{u}(F)=4$ and \cref{prop:sympu}.
\end{proof}

We  now compute $\widetilde{u}(Q)$. Generalised quadratic forms were classified over a large class of division algebras in  \cite{tignol:qfskewfield}, from which the following classification result for Kaplansky fields is easily derived.

\begin{prop}\label{arfclass}
Nonsingular quadratic forms over $(Q,\gamma)$ are classified by their Arf invariants.
\end{prop}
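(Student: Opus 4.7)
\begin{sproof}
The strategy is to invoke the classification of nonsingular generalised quadratic forms over division algebras with involution established by Tignol in \cite{tignol:qfskewfield}. That result furnishes a sufficient condition on the base field $F$ under which two nonsingular quadratic forms over $(D,\theta)$ are isometric if and only if they have the same dimension and the same Arf invariant. My plan is to apply this theorem to $(D,\theta) = (Q,\gamma)$ and to verify its hypothesis directly from the Kaplansky assumption.

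The Kaplansky hypothesis enters through \cref{lemma:splitting}: the uniqueness of the $F$-quaternion division algebra $Q$ is equivalent to the statement that $\pi = \pfr{b,a}$ is, up to isometry, the unique anisotropic $2$-fold quadratic Pfister form over $F$. I would combine this rigidity at the level of $2$-fold Pfister forms with $\widetilde{u}(F) = 4$ from \cref{nonsingandsymp} to control the behaviour of nonsingular quadratic forms over $F$ tightly enough for Tignol's hypothesis to hold. The Pfister-form manipulation involved runs along the same lines as the argument in the proof of \cref{prop:21}, where any $3$-dimensional subform of an anisotropic $2$-fold Pfister form over $F$ is forced, via uniqueness of $\pi$ and dimension counting, into a very constrained shape.

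With the field-level hypothesis verified, Tignol's theorem yields at once that two nonsingular quadratic forms over $(Q,\gamma)$ of the same dimension are isometric precisely when their Arf invariants agree, which is the content of the proposition. The main obstacle in this plan is not computational but rather one of bookkeeping: it lies in locating the exact formulation of the hypothesis of Tignol's classification in \cite{tignol:qfskewfield} and matching it faithfully to the Kaplansky condition on $F$. Once this matching is made, no further computation is required.
\end{sproof}
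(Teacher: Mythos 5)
You have identified the right reference and the right overall strategy (reduce to the Elomary--Tignol classification in \cite{tignol:qfskewfield}), which is exactly what the paper does, but your description of that theorem contains a genuine gap that conceals where the actual work lies. The classification in \cite[(5.2)]{tignol:qfskewfield} is \emph{not} by dimension and Arf invariant alone under a hypothesis on $F$: it classifies nonsingular forms by dimension, Arf invariant \emph{and} a relative invariant taking values in the Brauer group of $F$ modulo the class of $Q$. The step your proposal omits entirely is the observation that for a Kaplansky field of characteristic $2$ the $2$-torsion part of $\mathrm{Br}(F)$ is $\{0,[Q]\}$ (every exponent-$2$ class is a sum of quaternion classes, and every $F$-quaternion algebra is split or isomorphic to $Q$), so that this relative invariant is automatically trivial. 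Without that, ``same dimension and same Arf invariant'' does not yet imply isometry, and this vanishing is essentially the entire content of the paper's proof beyond the citation.

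Relatedly, the hypotheses of Tignol's theorem that have to be checked are not field-level conditions on quadratic forms of the sort you propose to verify with $\widetilde{u}(F)=4$ and uniqueness of $\pi$; they are that every central simple $F$-algebra of exponent $2$ is Brauer equivalent to a quaternion algebra and that there is no Cayley division algebra over $F$. The second condition could indeed be extracted from $\widetilde{u}(F)=4$ (an anisotropic $3$-fold Pfister form would be the norm form of a Cayley division algebra), but the first is a Brauer-group statement that no amount of Pfister-form manipulation in the style of \cref{prop:21} will produce; it again comes from the computation of the $2$-torsion of $\mathrm{Br}(F)$. So the ``bookkeeping'' you defer to the end is in fact the proof, and as sketched your argument would not close.
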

\begin{proof} 
As $F$ is a  Kaplansky field we have that the $2$-torsion part of the Brauer group of $F$ consists only of the class of $Q$ and $0$. In particular, 
 any central simple $F$-algebra of exponent $2$ is Brauer equivalent to $Q$ and there is no Cayley division algebra over $F$ (see the comments before \cite[(5.1)]{tignol:qfskewfield}). Therefore by \cite[(5.2)]{tignol:qfskewfield} we have that quadratic forms over $(Q,\gamma)$ are classified by their Arf invariants and a relative invariant  taking values in the Brauer group of $F$ modulo the class generated by $Q$. However, since the class of $Q$  is the only non-trivial class in the Brauer group of $F$, this invariant is always trivial.
 \end{proof}

\begin{cor}\label{simkap} For all 
 $x\in Q\setminus \Sym(Q,\gamma)$ and  $c\in F^\times$ we have $\qf{x}_{(Q,\gamma)}\simeq\qf{cx}_{(Q,\gamma)}$. 
\end{cor}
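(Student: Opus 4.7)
The plan is to use the classification of nonsingular quadratic forms over $(Q,\gamma)$ by their Arf invariants, established in Proposition \ref{arfclass}. The only work is to check that the hypotheses are met and that the two forms have the same Arf invariant.

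First I would verify that both forms are nonsingular. Since $x \notin \Sym(Q,\gamma)$ and $c \in F^\times$ is central, $cx \notin \Sym(Q,\gamma)$ as well (otherwise $c\gamma(x) = cx$ forces $\gamma(x) = x$). Hence both $\qf{x}_{(Q,\gamma)}$ and $\qf{cx}_{(Q,\gamma)}$ are $1$-dimensional nonsingular quadratic forms by \cref{diagstuff}, $(b)$.

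Next I would observe that $\qf{cx}_{(Q,\gamma)}$ coincides with the scaled form $c\cdot\qf{x}_{(Q,\gamma)}$. This is because $c \in F \subseteq Z(Q)$, so for every $y \in Q$ we have $\gamma(y)(cx)y = c\cdot \gamma(y)xy$. By the property of the Arf invariant recorded after its definition, $\Delta(\lambda\rho) = \Delta(\rho)$ for all $\lambda \in F^\times$, so $\Delta(\qf{cx}_{(Q,\gamma)}) = \Delta(\qf{x}_{(Q,\gamma)})$.

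Having two nonsingular quadratic forms over $(Q,\gamma)$ of the same dimension and with the same Arf invariant, \cref{arfclass} then yields $\qf{x}_{(Q,\gamma)} \simeq \qf{cx}_{(Q,\gamma)}$. There is no significant obstacle here; the only subtlety is making sure to invoke centrality of $c$ to identify $\qf{cx}_{(Q,\gamma)}$ with the scalar multiple $c\cdot\qf{x}_{(Q,\gamma)}$, so that the invariance of $\Delta$ under scaling by $F^\times$ can be applied directly.
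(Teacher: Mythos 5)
Your proof is correct and follows essentially the same route as the paper: both check nonsingularity via \cref{diagstuff}~$(b)$, observe $\Delta(\qf{cx})=\Delta(\qf{x})$, and conclude by \cref{arfclass}. Your extra step of identifying $\qf{cx}_{(Q,\gamma)}$ with the scaled form $c\cdot\qf{x}_{(Q,\gamma)}$ so as to invoke the invariance of $\Delta$ under scaling is a reasonable (and slightly more explicit) justification of the equality of Arf invariants that the paper simply asserts.
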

\begin{proof} 
Note that the forms in the statement are nonsingular by \cref{diagstuff}, $(b)$.
We have $\Delta(\qf{x})=\Delta(\qf{cx})$ and hence the result follows from \cref{arfclass}.
%Let $a,b\in F$ be such that $Q=[a,b)$.
% Assume  that $\qf{x}_{(Q,\gamma)}\not\simeq \qf{cx}_{(Q,\gamma)}$. By \cref{schar} neither of the quadratic forms  $[1,a]$ and $b[1,a]$ over $F$ represent $c$. 
% Hence $[1,a]\perp \qf{c}$ is anisotropic, and it follows by an argument similar to that in \cref{prop:21} that $\pfr{c,a}$ is anisotropic. Therefore by \cref{lemma:splitting}, the $F$--quaternion algebra $[a,c)$ is division and hence isomorphic to $Q$. 
%  Then, again by \cref{lemma:splitting} we have $\pi\simeq \pfr{c,a}$. It follows by Witt cancellation that we have  $b[1,a]\simeq c[1,a]$. This is a contradiction, as $b[1,a]$ does not represent $c$. Hence $\pi\not\simeq \pfr{c,a}$ and hence $Q\not\simeq [a,c)$. This is not possible as $F$ is a Kaplansky field.   Hence $\qf{x}_{(Q,\gamma)}\simeq \qf{cx}_{(Q,\gamma)}$.
\end{proof}

The following can be seen as a characteristic $2$ analogue  of \cite[Theorem 1]{Tsuk}.

\begin{prop}\label{tsuk}
Let $\rho$ be a  quadratic form over $(Q,\gamma)$  of type $(4,0)$ or $(3,1)$. Then $\rho$ is isotropic. 
\end{prop}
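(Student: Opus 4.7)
The plan is to use the Arf invariant classification (\cref{arfclass}) together with $u^-(Q)=1$ (\cref{nonsingandsymp}) and the Kaplansky hypothesis. Since \cref{arfclass} classifies nonsingular forms by dimension and Arf invariant, it suffices, in either case, to exhibit an isotropic nonsingular form of the same dimension and Arf invariant as $\rho$; \cref{arfclass} will then force $\rho$ to be isometric to (hence equal to) this isotropic form.

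For type $(4,0)$: Given $\rho$ with $\Delta(\rho)=\alpha$, the explicit isotropic form I would use is $H\perp\qf{u,v}_{(Q,\gamma)}$, where $H\simeq\qf{x,x}_{(Q,\gamma)}$ is a hyperbolic plane (of trivial Arf) and $u,v\in Q\setminus\Sym(Q,\gamma)$ are chosen so that $\Nrd_Q(u)+\Nrd_Q(v)\equiv\alpha\pmod{\wp(F)}$. The crucial technical step is that, as $u,v$ range over $Q\setminus\Sym(Q,\gamma)$, this sum covers all of $F/\wp(F)$. Viewing $\Nrd_Q$ as the norm Pfister form $\pi=\pfr{b,a}$, the sum $\Nrd_Q(u)+\Nrd_Q(v)$ is the value of the $F$-form $\pi\perp\pi$, which is isotropic (any value represented by $\pi$ is represented at least twice) and hence universal. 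One then verifies that restricting to reduced-trace-nonzero $u,v$ does not shrink the image modulo $\wp(F)$, invoking the Kaplansky hypothesis (that $\pi$ is the unique anisotropic $2$-fold Pfister form) to control $\Nrd_Q$ on $Q\setminus\Sym(Q,\gamma)$.

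For type $(3,1)$: Decompose $\rho\simeq\rho_1\perp\qf{w}_{(Q,\gamma)}$ with $\rho_1$ nonsingular of dimension $3$ and $w\in\Sym(Q,\gamma)\setminus F$. Since the polar form of $\rho_1$ has dimension $3>u^-(Q)$, \cref{symreps} yields an element $y\in\Sym(Q,\gamma)$ represented by $\rho_1$. If $y\in F=\Alt(Q,\gamma)$ then $\rho_1$, and hence $\rho$, is already isotropic. Otherwise, I would show that $\qf{w}_{(Q,\gamma)}$ represents $y$ modulo $F$, i.e.\ find $v\in Q$ with $\gamma(v)wv\equiv y\pmod{F}$ inside the $2$-dimensional $F$-space $\Sym(Q,\gamma)/F$; combining this $v$ with the vector representing $y$ in $\rho_1$ yields an isotropic vector of $\rho$. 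Here the Kaplansky hypothesis ensures that the Frobenius-semilinear map $v\mapsto\gamma(v)wv+F$ has image large enough to contain $y+F$. The main obstacle throughout is translating the multiplicative Brauer-group statement (uniqueness of the anisotropic $2$-fold Pfister form) into the additive surjectivity modulo $\wp(F)$ needed above; this requires careful bookkeeping around the interplay between $F^{\times 2}$ and $\wp(F)$, which do not behave compatibly under multiplication.
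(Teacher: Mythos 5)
Your type $(4,0)$ argument is essentially correct and takes a genuinely different route from the paper: constructing, for each Arf class, an explicit isotropic $4$-dimensional representative $\qf{x,x}\perp\qf{u,v}$ and invoking \cref{arfclass}. The universality step does work: with a quaternion basis $(1,i,j,k)$ as in (\ref{eqnarray:qatbas}), taking $\Trd_Q(u)=\Trd_Q(v)=1$ and varying only the $j,k$-components, the sum $\Nrd_Q(u)+\Nrd_Q(v)$ runs modulo $\wp(F)$ over the values of $b([1,a]\perp[1,a])$, which is isotropic nonsingular and hence universal; so the restriction to trace-nonzero $u,v$ costs nothing. Note that this verification needs no Kaplansky input at all --- the Kaplansky hypothesis enters only through \cref{arfclass}.

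The type $(3,1)$ argument has a genuine gap. For $w,y\in\Sym(Q,\gamma)\setminus F$, an element $v$ with $\gamma(v)wv\equiv y\pmod{F}$ exists precisely when the $2$-dimensional totally singular form $\qf{w,y}_{(Q,\gamma)}$ is isotropic, and by \cref{directtotsing} and \cref{totdecomp} such forms can be anisotropic whenever $u_d^+(Q)=\tfrac{1}{2}[F:F^2]\geqslant 2$, i.e.\ whenever $[F:F^2]\geqslant 4$; \cref{bigus} shows Kaplansky fields with $[F:F^2]$ arbitrarily large exist. So the Kaplansky hypothesis does \emph{not} make the map $v\mapsto\gamma(v)wv+F$ surjective onto $\Sym(Q,\gamma)/F$, and the constraint that $y$ is represented by $\rho_1$ does not obviously rescue the step. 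The paper's proof sidesteps this entirely and treats both types uniformly: diagonalise $\rho\simeq\qf{x_1,x_2,x_3,y}$ and use \cref{diagstuff}, $(a)$, to push all four entries into the $3$-dimensional space $Fi+Fj+Fk$, so that a nontrivial $F$-linear relation $\sum_{\ell}c_\ell x_\ell+c_4y=0$ holds; then \cref{simkap} (scaling invariance of $1$-dimensional \emph{nonsingular} forms, itself a consequence of \cref{arfclass}) allows one to replace each $x_\ell$ by $c_\ell c_4^{-1}x_\ell$ without touching the possibly singular entry $y$, converting the linear relation into an isotropic vector. If you want to salvage your approach, you need some device of this kind that exploits the $F$-linear structure rather than surjectivity of the one-variable quadratic map.
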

\begin{proof} Let $(1,i,j,k)$ be an $F$-quaternion basis of $Q$ satisfying (\ref{eqnarray:qatbas}). % for some $a\in F$ and $b\in F^\times$. 
By \cref{diagstuff} there exist	$x_1,x_2,x_3\in Q\setminus \Sym(Q,\gamma)$ and an element $y\in Q$ such that $\rho\simeq\qf{x_1,x_2,x_3,y}$. We may assume that $y\in Q^\times$ and further that $x_1,x_2,x_3$ and $y$ lie in the $3$-dimensional $F$-subspace of $Q$ generated by $i,j$ and $k$ by \cref{diagstuff}, $(a)$.  As this subspace is $3$-dimensional, there exists $c_1,\ldots, c_4 \in  F$ not all zero such that $\sum_{\ell=1}^3c_\ell x_\ell+ c_4y=0$. If $c_4\neq 0$,  then for $\ell=1,2,3$  set $d_\ell=\frac{c_\ell}{c_4}$ if $c_\ell\neq 0$ and $d_\ell=1$ otherwise. 
By \cref{simkap}, we have that 
$$\rho\simeq\qf{x_1,x_2,x_3, y}\simeq \qf{d_1x_1,d_2x_2, d_3x_3, y}\,,$$
and hence  $\rho$ is isotropic. 
If $c_4=0$, then $\qf{x_1,x_2,x_3}$ is isotropic by a similar argument. In either case $\rho$ is isotropic. \end{proof}

\begin{cor}\label{nonsingukap}
$\widetilde{u}(Q)=3$. 
\end{cor}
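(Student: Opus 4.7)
The plan is to combine the lower bound already provided by \cref{3} with an upper bound obtained from \cref{tsuk}. Concretely, \cref{3} produces an explicit $3$-dimensional anisotropic nonsingular quadratic form over $(Q,\gamma)$ (using the quaternion basis $(1,i,j,k)$), so in particular $\widetilde{u}(Q)\geqslant 3$ holds for any $F$-quaternion division algebra and a fortiori when $F$ is Kaplansky. It remains to establish the matching upper bound $\widetilde{u}(Q)\leqslant 3$.

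For the upper bound I would argue as follows. Let $\rho$ be an anisotropic nonsingular quadratic form over $(Q,\gamma)$; then $\rho$ is of type $(\dim_Q(\rho),0)$. Suppose for contradiction that $\dim_Q(\rho)\geqslant 4$. Since the orthogonal sum decomposition of \cite[(9.2)]{dolphin:singuni} respects nonsingularity, any $4$-dimensional $Q$-subspace of $\rho$ (which exists by assumption on the dimension) restricts to a nonsingular subform of type $(4,0)$. By \cref{tsuk}, this $4$-dimensional subform is isotropic, contradicting the anisotropy of $\rho$. Therefore $\dim_Q(\rho)\leqslant 3$, which gives $\widetilde{u}(Q)\leqslant 3$ and hence equality.

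The only subtlety is justifying that $\rho$ actually admits a nonsingular $4$-dimensional subform; this is immediate since a nonsingular diagonalisable form $\qf{a_1,\ldots,a_n}_{(Q,\gamma)}$ with $n\geqslant 4$ (which we obtain from \cref{diagstuff}) has the obvious nonsingular subform $\qf{a_1,a_2,a_3,a_4}_{(Q,\gamma)}$ by \cref{diagstuff}$(b)$. No genuinely hard step is required here, as all the real work has been done in \cref{tsuk}; the present corollary is essentially a packaging of that proposition together with the lower bound from \cref{3}.
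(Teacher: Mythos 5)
Your proposal is correct and follows exactly the paper's argument: the lower bound comes from \cref{3} and the upper bound from \cref{tsuk}, with your diagonalisation-and-subform step simply making explicit the deduction the paper leaves implicit in ``It follows from \cref{tsuk}.'' No substantive difference.
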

\begin{proof}
We have that $\widetilde{u}(Q)\geqslant 3$ by \cref{3}.
It follows from \cref{tsuk} that $\widetilde{u}(Q)\leqslant 3$.
\end{proof}

%The analogous result to the following proposition for local fields of characteristic different from  $2$ was first shown in \cite{Tsuk}.

%\begin{cor} Let $K$ be a field of characteristic $2$ and
% $(Q,\gamma)$ be a $K$-quaternion algebra  with involution. Quadratic forms over $(Q,\gamma)$ are classified by their dimension and Arf invariant if and only if $K$ is a Kaplansky field.
%\end{cor}
%\begin{proof}
% The condition is sufficient by \cref{Tsuk}. 
%\end{proof}

We now consider $u^+(Q)$ and $u(Q)$ for Kaplansky fields. 

\begin{prop}\label{herkap}
$u^+(Q)=\frac{1}{2}[F:F^2]+1$.
\end{prop}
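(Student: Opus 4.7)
The plan is to obtain the equality by combining three results already established in the paper: the decomposition $u^+(Q) = u^+_d(Q) + u^-(Q)$, the exact value of $u^+_d$ for totally decomposable algebras, and the Kaplansky-specific computation of $u^-(Q)$.

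First I would apply \cref{prop:start}, part $(1)$, to reduce the computation to the two summands:
\[ u^+(Q) = u^+_d(Q) + u^-(Q). \]

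Next, since $Q$ is itself an $F$-quaternion division algebra, it is trivially totally decomposable (a tensor product of one quaternion algebra). Thus \cref{totdecomp} applies and yields
\[ u^+_d(Q) = \frac{[F:F^2]}{\deg(Q)} = \frac{1}{2}[F:F^2]. \]
Here no Kaplansky hypothesis is needed; this is a general fact about quaternion division algebras.

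Finally, the Kaplansky assumption enters through \cref{nonsingandsymp}, which tells us that $u^-(Q)=1$. Substituting these two values into the equation from \cref{prop:start} gives
\[ u^+(Q) = \frac{1}{2}[F:F^2] + 1, \]
as required. There is essentially no obstacle here beyond invoking the right prior results: the hard work has already been done in proving \cref{prop:start}, \cref{totdecomp}, and \cref{nonsingandsymp} (the last of which depended on the classification of anisotropic $2$-fold Pfister forms over a Kaplansky field via \cref{prop:21} and \cref{prop:sympu}).
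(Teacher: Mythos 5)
Your proposal is correct and follows exactly the same route as the paper, which also cites \cref{prop:start}~$(1)$, \cref{totdecomp} and \cref{nonsingandsymp}; your version merely spells out the substitution $u^+_d(Q)=\tfrac{1}{2}[F:F^2]$ and $u^-(Q)=1$ explicitly. No issues.
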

\begin{proof}
This follows  from \cref{totdecomp}, \cref{nonsingandsymp} and \cref{prop:start}, $(1)$.
\end{proof}

\begin{thm}\label{max}
$u(Q)=\mathrm{sup}\{ 3, \frac{1}{2}[F:F^2]+1 \}$.
\end{thm}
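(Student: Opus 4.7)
The plan is to prove two inequalities. The lower bound $u(Q)\geq \sup\{3,\tfrac{1}{2}[F:F^2]+1\}$ is immediate from the material already in place: \cref{prop:start}$(3)$ combined with \cref{herkap} gives $u(Q)\geq u^+(Q)=\tfrac{1}{2}[F:F^2]+1$, while $u(Q)\geq \widetilde{u}(Q)=3$ by \cref{nonsingukap}.

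For the upper bound I would follow the template of \cref{bounds}, but extract extra mileage from the Kaplansky hypothesis at the critical step. Let $\rho$ be an anisotropic quadratic form over $(Q,\gamma)$, written $\rho\simeq\rho_1\perp\rho_2$ with $\rho_1$ nonsingular of dimension $p$ and $\rho_2$ totally singular of dimension $n$. First, the anisotropy of $\rho_1$ together with \cref{nonsingukap} forces $p\leq 3$. Repeating the peeling-off argument from the proof of \cref{bounds}, write
$$\rho_1\simeq \rho'\perp \qf{x_1,x_1+y_1}\perp\ldots\perp\qf{x_m,x_m+y_m}$$
where $\rho'$ represents no symmetric element, so $\dim_Q(\rho')\leq u^-(Q)=1$ by \cref{repsym} and \cref{nonsingandsymp}, and where the totally singular form $\qf{y_1,\ldots,y_m,w_1,\ldots,w_n}$ is anisotropic, so $m+n\leq u_d^+(Q)=\tfrac{1}{2}[F:F^2]$ by \cref{totsing} and \cref{totdecomp}.

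The main step, and the real obstacle, is the case $p=3$: the generic bound from \cref{bounds} would only yield $\dim_Q(\rho)\leq 2+\tfrac{1}{2}[F:F^2]$, which is one too many for the claimed formula. The Kaplansky-specific input rescuing the bound is \cref{tsuk}: if $n\geq 1$, then $\rho_1\perp\qf{w_1}$ is a subform of $\rho$ of type $(3,1)$, hence isotropic by \cref{tsuk}, contradicting the anisotropy of $\rho$; so $n=0$ and $\dim_Q(\rho)=3$. The remaining cases $p\in\{0,1,2\}$ are quick to handle: the constraints $\dim_Q(\rho')\leq 1$ and $\dim_Q(\rho')+2m=p$ leave exactly the possibilities $(m,\dim_Q(\rho'))=(0,0),(0,1),(1,0)$ for $p=0,1,2$ respectively, and combining with $m+n\leq\tfrac{1}{2}[F:F^2]$ gives $\dim_Q(\rho)=p+n\leq 1+\tfrac{1}{2}[F:F^2]$ in each. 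Together with the $p=3$ case this yields $\dim_Q(\rho)\leq \sup\{3,\tfrac{1}{2}[F:F^2]+1\}$, completing the argument.
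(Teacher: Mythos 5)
Your proof is correct and follows essentially the same route as the paper's: both arguments split on the dimension $p\leq 3$ of the nonsingular part, use \cref{tsuk} to rule out type $(3,m)$ with $m\geq 1$, and for $p\leq 2$ convert the symmetric elements represented by the nonsingular part into an enlarged anisotropic totally singular form bounded by $u_d^+(Q)=\tfrac{1}{2}[F:F^2]$. The only cosmetic difference is that you organise the bookkeeping through the peeling decomposition from the proof of \cref{bounds}, whereas the paper argues each case directly in terms of $u^+(Q)=u_d^+(Q)+1$.
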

\begin{proof} Clearly $u(Q)\geqslant \widetilde{u}(Q)$ and $ \widetilde{u}(D)=3$ by  \cref{nonsingukap}. We also have that $u(Q)\geqslant u^+(Q)$ by \cref{prop:start}, $(3)$.
We now show that $u(Q)\leqslant\mathrm{sup}\{ 3, u^+(Q)\}$, which implies the result by \cref{herkap}.
 By \cref{prop:start}, $(1)$ and \cref{nonsingandsymp}, we also have $u^+(Q)=u^+_d(Q)+1$.

Let $\rho$ be an anisotropic quadratic form over $(Q,\gamma)$ such that $\dim_Q(\rho)=u(Q)$. Let $n,m\in\mathbb{N}$ be such that $\rho$ is of type 
 $(n,m)$. In particular 
  $n+m=u(Q)$. We have that  $n< 4$ by  \cref{tsuk}.
    Assume $n=3$. Then $m=0$ by  \cref{tsuk} and hence $u(Q)=3$. 
    We must also have $u^+(Q)\leqslant 3$ as $u^+(Q)\leqslant u(Q)$. Hence the result in this case.

  Assume $n=2$. Then as $u(Q)\geqslant 3$ we must have $m\geqslant 1$. 
Let $\rho_1$ and $\rho_2$ be respectively  nonsingular and totally singular anisotropic forms over $(Q,\gamma)$ such that $\rho\simeq \rho_1\perp\rho_2$. Then as $u^-(Q)=1$ by 
 \cref{nonsingandsymp}, $\rho_1$ represents an element $x\in\Sym(Q,\gamma)$ by \cref{repsym}.
 Since $\rho_1$ is anisotropic, we have $x\notin \Alt(Q,\gamma)$. 
It follows from the anisotropy of $\rho$ that the totally singular form $\rho_2\perp \qf{x}_{(Q,\gamma)}$ is anisotropic.
Therefore  by \cref{directtotsing} there exists a direct hermitian form over $(Q,\gamma)$ with dimension $\dim_Q(\rho_2)+1$. Therefore  $u^+_d(Q)\geqslant m+1$ and hence 
$$u^+(Q)=u^+_d(Q)+1  \geqslant m+2 = \dim_Q(\rho) = u(Q)\geqslant 3 \,. $$ 

Assume now that $n=1$ or $0$. As we know $u(Q)\geqslant 3$, in this case we must have $m\geqslant 2$ or $3$ respectively. Then there exists  an anisotropic  totally singular quadratic form $\rho'$ over $(Q,\gamma)$ with $\dim_Q(\rho')= u(Q)-1$ or $u(Q)$ respectively. By \cref{directtotsing} we therefore have 
$$u^+(Q)= u^+_d(Q)+1 \geqslant u(Q)\geqslant 3\,,$$
as required.\end{proof}

We  now show that    Kaplansky fields $F$  with  unique $F$-quaternion division algebra $Q$  can be constructed with invariants  $u^+(Q)$, $u(Q)$ and $u(F)$  arbitrarily large.

Let $\rho=(V,q)$ be a nonsingular quadratic form over a field  $K$.
If $\dim_K(\rho)\geqslant 3$ or if $\rho$ is anisotropic and $\dim(\rho)=2$, then we call the function field of the projective quadric over $K$ given by $\rho$ the \emph{function field of $\rho$} and denote it by $K(\rho)$. In the remaining cases we set $K(\rho)=K$.
This agrees with the definition in \cite[\S22]{Elman:2008}.

\begin{lemma}\label{kapcon}
Let $K$ be a field of characteristic $2$ such that there exists a $K$-quaternion division algebra $H$. Then there exists a field extension $L/K$ such that 
\begin{enumerate}[$(i)$]
\item $L$ is a Kaplansky field with unique $L$-quaternion division algebra $H_L$,
\item $[L:L^2]\geqslant [K:K^2]$.
%for all anisotropic symmetric bilinear forms $\varphi$ over $K$, the form $\varphi_L$ is anisotropic. 
\end{enumerate}
\end{lemma}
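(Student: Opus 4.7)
The plan is to construct $L$ as the union of a tower $K = K_0 \subseteq K_1 \subseteq K_2 \subseteq \cdots$ in which each step $K_n\subseteq K_{n+1}$ splits every quaternion division $K_n$-algebra not Brauer equivalent to $H_{K_n}$, while keeping $H$ itself division and not shrinking $[-:-^2]$. Let $\mathcal{Q}_n$ denote a set of representatives of isomorphism classes of quaternion division $K_n$-algebras $Q$ with $Q\not\sim_B H_{K_n}$, and take $K_{n+1}$ to be the compositum of the function fields $K_n(\pi_Q)$, $Q\in\mathcal{Q}_n$, where $\pi_Q=(Q,\Nrd_Q)$ is the norm form (an anisotropic $2$-fold Pfister form by \cref{lemma:splitting}); realise this compositum as the directed union of its finite subcomposita, and put $L=\bigcup_{n\geqslant 0} K_n$. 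Any quaternion division $L$-algebra is defined over some $K_n$ by finite generation, and if it is not Brauer equivalent to $H_{K_n}$ it is split over $K_{n+1}$ by \cref{lemma:splitting}; so the unique candidate for an $L$-quaternion division algebra is $H_L$, provided $H$ stays division throughout.

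The key input is the following characteristic $2$ statement: if $\pi$ and $\pi'$ are anisotropic $2$-fold Pfister forms over a field $M$ with $\pi\not\simeq\pi'$, then $\pi$ remains anisotropic over $M(\pi')$. In the language of Brauer groups this is Amitsur's theorem applied to exponent-$2$ classes, and in the language of quadratic forms it is a consequence of the Pfister-neighbor machinery worked out in characteristic $2$ in \cite{HoffmannLaghribi:qfpfisterneigbourc2}: were $\pi$ to become isotropic over $M(\pi')$ it would in fact become hyperbolic (Pfister forms are round), then $\pi'$ would be similar to a subform of $\pi$, and a dimension count together with the fact that similar Pfister forms of equal dimension are isometric (both represent $1$) would force $\pi\simeq\pi'$. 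Iterating this inside each finite subcompositum, and passing to the directed union (where anisotropy is trivially preserved since any isotropy vector lies in a finite subfield), shows that $\pi_H$ remains anisotropic over $K_{n+1}$; induction on $n$ and passage to $L$ give $(i)$.

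For $(ii)$, each extension $K_n\subseteq K_n(\pi_Q)$ is the function field of a smooth projective quadric and hence separably generated; composita and directed unions of separable extensions are separable, so $L/K$ is separable. The standard characteristic $2$ linear-disjointness fact then gives that $K\otimes_{K^2}L^2$ injects into $L$, so any $K^2$-basis of $K$ remains $L^2$-linearly independent in $L$, whence $[L:L^2]\geqslant[K:K^2]$.

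The main obstacle is the anisotropy-preservation statement for $2$-fold Pfister forms in characteristic $2$; the other ingredients (termination via finite generation, separability of smooth-quadric function fields, and linear disjointness for separable extensions in characteristic $2$) are standard. The construction is the characteristic $2$ analogue of the Kaplansky-field tower used in characteristic different from $2$ in \cite[\S6]{bechermah:uinvar}.
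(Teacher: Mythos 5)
Your proposal is correct and follows essentially the same route as the paper: an iterated (in the paper, transfinite) tower of function fields of the anisotropic $2$-fold Pfister forms other than the norm form of $H$, with the same key input that a Pfister form becomes isotropic over the function field of another only if the two are similar, hence isometric. The only minor variation is in part $(ii)$, where you invoke separability of the quadric function fields and linear disjointness from $K^{1/2}$, while the paper quotes Knebusch's result that anisotropic symmetric bilinear forms stay anisotropic up the tower together with $u^+(F)=[F:F^2]$; these are essentially equivalent facts.
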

\begin{proof} Let  $\rho$ be the norm form of $H$ over $K$. This form is anisotropic  by \cref{lemma:splitting}.
Let  $S_0$ be the set of all
anisotropic $2$-fold Pfister forms over $K$.
 Choose a well-ordering on the set $S_0$ and index its elements by ordinal numbers. So for some ordinal $\alpha$, we have $S_0=\{\psi_i\mid i<\alpha\}$. We construct a field $K^1$ by transfinite induction as follows: let $K_0=K$ and define 
\begin{itemize}
\item $K_i=K_{i-1}(\psi_i)$  if $i$ is not a limit ordinal and $(\psi_i)_{K_{i-1}}\not\simeq \rho_{K_{i-1}}$,
\item  $K_i=K_{i-1}$  if $i$ is not a limit ordinal and $(\psi_i)_{K_{i-1}}\simeq \rho_{K_{i-1}}$,
\item $K_i=\bigcup_{j<i}K_j$  if $i$ is a limit ordinal.
\end{itemize}
We then set $K^{[1]}=K_\alpha$.
For all $i$, if $\varphi$ is  an anisotropic symmetric bilinear  form over $K_{i-1}$ 
we have that $\varphi_{K{i}}$  is also anisotropic by \cite[(10.2.1)]{knebusch:1969}.
Hence it follows by transfinite induction that for all anisotropic symmetric bilinear forms $\varphi$ over $K$ the form $\varphi_{K^1}$ is anisotropic.  
Assume that  $ \rho_{K_{i-1}}$ is anisotropic. If  $(\psi_i)_{K_{i-1}}\simeq \rho_{K_{i-1}}$, clearly  $\rho_{K_{i}}$ is anisotropic. If $(\psi_i)_{K_{i-1}}\not\simeq \rho_{K_{i-1}}$ then 
by \cite[(23.6)]{Elman:2008} 
 $\rho_{K_{i}}$ is isotropic if and only if $\rho_{K_{i-1}}\simeq (\lambda \psi_i)_{{K_{i-1}}}$ for some $\lambda\in K^\times$. It follows from \cite[(9.9)]{Elman:2008} that  $\rho_{K_{i-1}}\simeq (\lambda \psi_i)_{K_{i-1}}$ if and only if $\rho_{K_{i-1}}\simeq (\psi_i)_{K_{i-1}}$.
 This contradicts $(\psi_i)_{K_{i-1}}\not\simeq \rho_{K_{i-1}}$, hence 
   $\rho_{K_{i}}$ is anisotropic. It follows  by transfinite induction that $\rho_{K^1}$ is anisotropic.

Let $S_1$ be the set of  all $2$-fold Pfister  forms over $K^{[1]}$ and construct $(K^{[1]})^{[1]}=K^{[2]}$ by the same procedure. Repeating this process, for $n\geqslant 1$, let $K^{[n]}={(K^{[n-1]})}^{[1]}$ and let $L=\bigcup_{n=1}^\infty K^{[n]}$.  We have that  
for all anisotropic symmetric bilinear forms $\varphi$ over $F$ the form $\varphi_{L}$ is anisotropic by the same argument given above for $K^{[1]}$.  In particular $[L:L^2]\geqslant [K:K^2]$.
Similarly, $\rho_L$ is anisotropic, and further, is the only anisotropic $2$-fold Pfister form over $L$.
Hence  by \cref{lemma:splitting}, $L$ is a Kaplansky field with unique $L$-quaternion division algebra $H_L$.
\end{proof}

One can also use a similar construction method to that in \cref{kapcon} to prove the following result. As the proof uses the function fields of totally singular quadratic forms there are certain subtleties in this arguement that do not occur in \cref{kapcon}. However, another similar  result  to the one below using totally singular forms but giving a field $L$ with $[L:L^2]=2$ is shown  in  \cite[(6.4)]{totlink}, and  the proof is not substantially  different. 
We leave the full details  to the reader. 

\begin{lemma}\label{fix2power}
Let $K$ be a field of characteristic $2$. Then for any  $n\in\mathbb{N}$ with $n\geqslant 2$ there exists a field extension $L/K$ such that $[L:L^2]=2^n$ and  for all anisotropic $2$-fold Pfister forms $\pi$ over $K$, the form $\pi_L$ is anisotropic.\end{lemma}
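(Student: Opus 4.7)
The plan is to mimic the transfinite construction of \cref{kapcon}, now passing to function fields of anisotropic totally singular quadratic forms of dimension $>2^n$ instead of function fields of anisotropic $2$-fold Pfister forms. First I would reduce to the case $[K:K^2]\geq 2^n$. If $[K:K^2]<2^n$, replace $K$ by the purely transcendental extension $K(t_1,\ldots,t_m)$ for $m$ sufficiently large: in characteristic $2$ one has $[K(t):K(t)^2]=2[K:K^2]$, and purely transcendental extensions preserve anisotropy of all quadratic and bilinear forms. After this harmless replacement I may fix elements $a_1,\ldots,a_n\in K^\times$ whose $2^n$ squarefree products are $K^2$-linearly independent; equivalently, the associated $2^n$-dimensional totally singular form is anisotropic over $K$.

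Next I would enumerate by ordinals all anisotropic totally singular quadratic forms of dimension strictly greater than $2^n$ over the field under construction and, exactly as in \cref{kapcon}, pass successively to their function fields, taking unions at limit stages. Iterating this whole procedure countably many times and taking the union produces a field $L$ over which every anisotropic totally singular form has dimension at most $2^n$. In particular $[L:L^2]\leq 2^n$.

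The main obstacle is verifying that the desired anisotropies survive each function field step. Concretely, given an anisotropic totally singular form $\phi$ over a field $F$ with $\dim_F(\phi)>2^n$, one needs to check:
\begin{enumerate}[$(a)$]
\item every anisotropic $2$-fold Pfister form over $F$ remains anisotropic over $F(\phi)$, and
\item the distinguished $2^n$-dimensional totally singular form built from $a_1,\ldots,a_n$ remains anisotropic over $F(\phi)$.
\end{enumerate}
Both are instances of the characteristic $2$ function-field subform principle for totally singular quadratic forms developed by Hoffmann and Laghribi (cf.~\cite{HoffmannLaghribi:qfpfisterneigbourc2}): if a form becomes isotropic over $F(\phi)$, then $\phi$ must embed, up to scaling, as a subform of a specific quasi-Pfister form attached to the isotropy data. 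In case $(a)$ this quasi-Pfister form has dimension at most $2$ (coming from the bilinear part of the $2$-fold Pfister), and in case $(b)$ it has dimension at most $2^n$. Since $\dim_F(\phi)>2^n$, no such embedding can occur, and both anisotropies are preserved. This is the subtle ingredient referred to in the paper: unlike the nonsingular Pfister form setting of \cref{kapcon}, here the relevant Cassels--Pfister type subform theorem for totally singular forms must be invoked at each step.

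Applying $(a)$ and $(b)$ throughout the transfinite induction shows that every anisotropic $2$-fold Pfister form over $K$ is anisotropic over $L$, and that the distinguished $2^n$-dimensional totally singular form over $K$ is still anisotropic over $L$. The latter forces $[L:L^2]\geq 2^n$, which combined with the upper bound from the construction gives $[L:L^2]=2^n$, as required.
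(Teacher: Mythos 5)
Your proposal is correct and follows essentially the same route as the paper's sketch: reduce to $[K:K^2]\geqslant 2^n$ by a purely transcendental extension, then iterate function fields of totally singular forms of dimension $>2^n$, preserving the anisotropy of a fixed $2^n$-dimensional quasi-Pfister form (for the lower bound on $[L:L^2]$) and of all anisotropic $2$-fold Pfister forms. The only cosmetic difference is the tool invoked for the preservation steps $(a)$ and $(b)$: both follow most directly from the Hoffmann--Laghribi $2$-power separation theorem \cite[(1.1)]{2-powersep}, which the paper cites, since the forms to be preserved have dimension at most $2^n<\dim\phi$; your appeal to a quasi-Pfister subform principle reaches the same conclusion.
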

\begin{sproof}
If $[K:K^2]< 2^n $ then we can adjoin  variables to $K$ until we have a field $K'$ such that $[K':K']\geqslant 2^n$. For any anisotropic $2$-fold Pfister forms $\pi$ over $K$, $\pi_{K'}$ is anisotropic by \cite[(19.6)]{Elman:2008}. 

Assume now that $[K:K^2]\geqslant 2^n$. Then we can construct a field extension $K^{[1]}/K$ using successive function fields of totally singular quadratic forms over $K$ of dimension $2^n+1$ so that all totally singular quadratic forms over $K$ of dimension $2^n+1$ or greater  are isotropic after extending scalars to $K^{[1]}$ (cf.~\cite[(6.4)]{totlink}). Repeating this process and taking the union of the resulting fields  similarly to  as  in \cref{kapcon} we get a field extension $L/K$ such that $[L:L^2]\leqslant 2^n$. Further,  using $2$-power separation theorem \cite[(1.1)]{2-powersep}  and transfinite induction we obtain that $[L:L^2]=2^n$ and  for all anisotropic $2$-fold Pfister forms $\pi$ over $K$, the form $\pi_L$ is anisotropic. 
\end{sproof}

\begin{prop}\label{bigus}
Let $K$ be a field of characteristic $2$
such that there exists a non-trivial quadratic  separable extension of $K$.
 %For $n\in \mathbb{N}$ let $M=K(Y, X_1,\ldots, X_n)$ for variables $Y,X_1,\dots, X_n$.  
 Then for any $n\in\mathbb{N}$ with $n\geqslant 1$ there exists a field extension $L/K$ such that $L$ is a Kaplansky field with $u(L)\geqslant 2^{n+1}$ and for the unique $L$--quaternion division algebra $H$ we have   $u(H)=u^+(H)= 2^{n}+1$.  
\end{prop}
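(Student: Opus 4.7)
Reduction to a construction. Suppose I can build a Kaplansky extension $L/K$ with $[L:L^2]=2^{n+1}$ whose unique $L$-quaternion division algebra is $H$. Then \cref{herkap} gives $u^+(H)=\tfrac{1}{2}[L:L^2]+1=2^n+1$; since $n\geq 1$ implies $2^n+1\geq 3$, \cref{max} gives $u(H)=\sup\{3,2^n+1\}=2^n+1$, so in particular $u(H)=u^+(H)$. Moreover, by \cref{splitcase} and \cref{prop:start}\,(3) applied to the $L$-division algebra $L$, we have $u(L)\geq u^+(L)=[L:L^2]=2^{n+1}$. Hence every assertion of the theorem follows from such a construction.

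Setting up the base field. The hypothesis supplies some $a\in K\setminus\wp(K)$. Introduce indeterminates $t_1,\ldots,t_{n+1}$ and set $K_0=K(t_1,\ldots,t_{n+1})$. Standard valuation arguments (using the $t_1$-adic valuation) show that $\pfr{t_1,a}$ is anisotropic over $K_0$, so the quaternion algebra $H_0=[a,t_1)_{K_0}$ is division by \cref{lemma:splitting}. The same arguments show the bilinear Pfister form $\pff{t_1,\ldots,t_{n+1}}$ is anisotropic over $K_0$, whence $[K_0:K_0^2]\geq 2^{n+1}$ via \cref{splitcase}.

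The construction. I would build $L$ from $K_0$ by a single transfinite recursion that merges the procedures in the proofs of \cref{kapcon} and \cref{fix2power}. At each stage one adjoins simultaneously the function fields of (a) all anisotropic $2$-fold quadratic Pfister forms not isometric to $\pfr{t_1,a}$ and (b) all anisotropic totally singular quadratic forms of dimension exceeding $2^{n+1}$, then passes to unions at limit ordinals; $L$ is the union over countably many iterations of this operation. In the limit, (a) ensures the only anisotropic $2$-fold Pfister up to isometry is $\pfr{t_1,a}$, so by \cref{lemma:splitting} $L$ is Kaplansky with unique quaternion division algebra $[a,t_1)_L$; and (b) ensures every totally singular form of dimension $>2^{n+1}$ is isotropic over $L$, yielding $[L:L^2]\leq 2^{n+1}$. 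For the lower bound, one verifies by the Pfister subform theorem for (a) and the $2$-power separation theorem \cite[(1.1)]{2-powersep} for (b) that the anisotropic bilinear Pfister $\pff{t_1,\ldots,t_{n+1}}$ of dimension $2^{n+1}$ remains anisotropic over $L$, so $[L:L^2]\geq 2^{n+1}$. Combined, $[L:L^2]=2^{n+1}$, and the reduction in the first paragraph concludes the proof.

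The main obstacle is proving that the nonsingular $2$-fold quadratic Pfister form $\pfr{t_1,a}$ itself survives the adjoining of function fields of totally singular quadratic forms of dimension $>2^{n+1}$: preservation under (a) is the standard Pfister subform theorem as already used in \cref{kapcon}, but the function field of a totally singular form is a purely inseparable quadratic extension of a rational function field, and one must verify separately that a nonsingular anisotropic Pfister form does not become isotropic after such an extension. This is analogous in spirit to the preservation arguments cited in the proof of \cref{fix2power} (via \cite[(1.1)]{2-powersep} and \cite[(10.2.1)]{knebusch:1969}) but has to be adapted from bilinear forms to the nonsingular quadratic Pfister setting.
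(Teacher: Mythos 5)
Your proposal is correct and follows essentially the same route as the paper: the paper likewise reduces everything to producing a Kaplansky field $L$ with $[L:L^2]=2^{n+1}$ via \cref{herkap}, \cref{max}, \cref{splitcase} and \cref{prop:start}, and then builds $L$ by alternating applications of \cref{fix2power} and \cref{kapcon} (iterating when \cref{kapcon} increases $[L:L^2]$), which is just your merged tower split into its two constituent lemmas. The obstacle you flag --- that the nonsingular Pfister form $\pfr{t_1,a}$ survives the function fields of totally singular forms of dimension $>2^{n+1}$ --- is exactly what the $2$-power separation theorem \cite[(1.1)]{2-powersep} provides (here $\dim\pfr{t_1,a}=4\leqslant 2^{n+1}<\dim\psi$ for every form $\psi$ being killed), and this is already built into \cref{fix2power}, so there is no gap.
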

\begin{proof} Let $c\in K$ be such that $K(x)/K$ is a non-trivial  quadratic separable extension where we have  $x^2+x=c$. Take $K'=K(Y)$ for some variable $Y$ and let $H=[c,Y)_{K'}$. This is an  $K'$-division algebra by \cite[(19.6)]{Elman:2008} and   \cref{lemma:splitting}. 

By \cref{fix2power} and \cref{lemma:splitting}, for any $n\geqslant 2$ we can find a field extension $L/K'$ such that $H_L$ is division and $u^+(L)=[L:L^2]=2^{n+1}\leqslant u(L)$. If $L$ is a Kaplansky field then $u^+(H)= 2^{n}+1$ and $u(H)= 2^{n}+1$  by \cref{herkap} and \cref{max}. Otherwise by \cref{kapcon} there exists a field extension $L'/L$ such that $L'$ is a Kaplansky field with $H_{L'}$ as the unique $L'$-quaternion algebra and $[L':(L')^2]\geqslant [L:L^2]$. 
If $[L':(L')^2]= [L:L^2]$ then we are done. Otherwise repeating  the above constructions  inductively gives the required field.
\end{proof}

\section*{Acknowledgements}
 This work was supported by  the projects   \emph{Explicit Methods in Quadratic Form Theory} and 
 \emph{Automorphism groups of locally finite trees} (G011012), both with the Research Foundation, Flanders, Belgium (F.W.O.~Vlaanderen). %
%Thanks also to  Jean-Pierre Tignol for  useful  discussions on the subject of this work.

\small{
}


\begin{thebibliography}{1}



\bibitem{Albert:1968}
A.~Albert.
\newblock {\em Structure of Algebras}, vol.~24 of {\em Colloquium
  Pub}.
\newblock American Math. Soc., 1968.


\bibitem{Baeza:uinv2}
R.~Baeza.
\newblock {Comparing $u$-invariants of characteristic $2$.}
\newblock {\em  Bol. Soc. Bras. Mat.}, 13:105-114, 1982.

\bibitem{dolphin:quadpairs}
K.J.~Becher and A.~Dolphin.
\newblock Non-hyperbolic splitting of quadratic pairs.
\newblock {\em Journal of Algebra and its applications}, 14, 22pp, 2015.



\bibitem{bechermah:uinvar}
K.J.~Becher and M.G.~Mahmoudi.
\newblock The orthogonal $u$-invariant of a quaternion algebra.
\newblock {\em Bull. Belg. Math. Soc. Simon Stevin}, 17(1):181--192, 2010.


\bibitem{totlink}
A.~Chapman, A.~Dolphin and A.~Laghribi. 
Total linkage of Pfister forms and quaternion algebras in characteristic two. \emph{Journal of Pure and Applied Algebra}, {220 (2016): 3676--3691} 





%\bibitem{Bak:quadmod}
%A.~Bak.
%\newblock {On modules with quadratic forms.}
%\newblock {\em  Algebraic K-Theory and its Geometric Applications (Conf., Hull, 1969)}, 55--66, {\em Springer Berlin}.

\bibitem{dolphin:direct}
A.~Dolphin.
\newblock Decomposition of algebras with involution in characteristic $2$.
\newblock {\em Journal of Pure and Applied Algebra},  217(9):1620-1633, 2013.  
  

\bibitem{dolphin:orthpfist}
A.~Dolphin.
\newblock Orthogonal Pfister involutions in characteristic two.
\newblock {\em Journal of Pure and Applied Algebra}, 218(10):1900-1915, 2014.
%
\bibitem{dolphin:totdecompsymp}
A.~Dolphin.
\newblock Totally decomposable symplectic and unitary involutions.
\newblock {\em Manuscripta Math.}, doi:10.1007/s00229-016-0891-6. 
  
 
\bibitem{dolphin:singuni}
A.~Dolphin.
\newblock Singular and totally singular unitary spaces.
\newblock {\em Communications in Algebra},  43(12):5141-5158, 2015. 
  
  
  
%
  
  
%
%
\bibitem{Elman:2008}
R.~Elman, N.~Karpenko, and A.~Merkurjev.
\newblock {\em The Algebraic and Geometric Theory Quadratic Forms}, volume~56
  of {\em Colloq. Publ., Am. Math. Soc.}
\newblock Am. Math. Soc., 2008.
%
\bibitem{tignol:qfskewfield}
M.A. Elomary and J.-P. Tignol.
\newblock Classification of quadratic forms over skew fields of characteristic
  $2$.
\newblock {\em Journal of Algebra}, 240:366--392, 2001.
%
%\bibitem{HoffmannLaghribi:qfpfisterneigbourc2}
%D.~Hoffmann and A.~Laghribi.
%\newblock Quadratic forms and {P}fister neighbors in characteristic 2.
%\newblock {\em Trans. Amer. Math. Soc.}, 356(10):4019--4053, 2004.
%


\bibitem{Gille:2006}
P.~Gille and T.~Szamuely.
\newblock {\em Central Simple Algebras and Galois Cohomology}.
\newblock Cambridge studies in advanced mathematics. Cambridge University
  Press, 2006.


\bibitem{Hoffmann:usurvy}
D.~Hoffmann.
\newblock Isotropy of quadratic forms and field invariants. 
\newblock {\em Quadratic forms and their applications (Dublin 1999)}, 73-102, Contep. Math., 272, {\em Amer. Math. Soc., Providence}, 2000.
%




\bibitem{2-powersep}
D.~Hoffmann and A.~Laghribi.
\newblock Isotropy of quadratic forms over the function field
of a quadric in characteristic $2$.
\newblock {\em Journal of Algebra}, 295:362--386, 2006.





\bibitem{HoffmannLaghribi:qfpfisterneigbourc2}
D.~Hoffmann and A.~Laghribi.
\newblock Quadratic forms and {P}fister neighbors in characteristic 2.
\newblock {\em Trans. Amer. Math. Soc.}, 356(10):4019--4053, 2004.


\bibitem{knebusch:1969}
M.~Knebusch.
\newblock {\em Grothendieck und Wittringe von nichtausgearteten symmetrischen
  Bilinearformen}.
\newblock Math-natw. Klasse 3. Sitzb. Heidelberg Akademie Wiss., 1969/1970.




\bibitem{Knus:1991}
M.-A. Knus.
\newblock {\em Quadratic and Hermitian Forms over Rings}, volume 294 of {\em
  Grundlehren der mathematischen Wissenschaften}.
\newblock Springer-Verlag, 1991.

\bibitem{Knus:1998}
M.-A. Knus, A.S. Merkurjev, M.~Rost, and J.-P. Tignol.
\newblock {\em The Book of Involutions}, volume~44 of {\em Colloq. Publ., Am.
  Math. Soc.}
\newblock Am. Math. Soc., 1998.
%
%\bibitem{Laghribi:totsingsplit}
%A.~Laghribi.
%\newblock On splitting of totally singular quadratic forms.
%\newblock {\em Rendiconti del Circolo Matematico di Palermo}, 53(3):325--336,
%  2004.
%


\bibitem{Mahmoudi:heruinv}
M.G.~Mahmoudi.
\newblock Hermitian forms and the $u$-invariant.
\newblock {\em Manuscripta Math.}, 116(4):492-516,
  2005.


\bibitem{mammonetignolwad:char2u}
P. Mammone,  J.-P. Tignol and A. Wadsworth.
\newblock Fields of characteristic $2$ with prescribed $u$-invariants.
\newblock {\em Math. Ann.}, 290:109-128,
  1991.


%
%
%
%
\bibitem{Merkurjev:evenuinv}
A.~Merkurjev.
\newblock Simple algebras and  quadratic forms.
\newblock {\em Izv.~Akad.~Nauk.~SSSR}, 55:218-224,
  1991.
%

%
%%
%\bibitem{Pariharsuresh:uherforms}
%S. Parihar and V. Suresh.
%\newblock On the $u$-invariant of {H}ermitian forms.
%\newblock {\em Proc. Indian Acad. Sci. Math. Sci.}, 123(3):303-313,
%  2013.
%%
%






%
\bibitem{AGNT}
V.P.~Platonov and A.S.~Rapinchuk.
\newblock {\em Algebraic groups and number theory}.
\newblock Academic Press, Inc. Boston, MA, 1994.
%%



%
\bibitem{pierce:1982}
R.~Pierce.
\newblock {\em Associative Algebras}.
\newblock Graduate texts in mathematics. Springer-Verlag, 1982.
%%
%
%
%
\bibitem{rowen:indecomp}
L.H.~Rowen.
\newblock Division algebras of exponent $2$ and  characteristic $2$.
\newblock {\em Journal of Algebra}, 90(1):71-83, 1984.  
  



\bibitem{tsukchar2}
C.~M.~dos Santos.
\newblock Tsukamoto's theorem in characteristic two.
\newblock {\em Bol. Soc. Parana. Mat. (3)}:155--162, 2014.



\bibitem{Scharlau:1985}
W.~Scharlau.
\newblock {\em Quadratic and hermitian forms}, volume 270 of {\em Grundlehren
  der Mathematischen Wissenschaften}.
\newblock Springer-Verlag, 1985.





\bibitem{serre1979local}
J.-P.~Serre.
\newblock {\em Local fields}, volume~67.
\newblock New York: Springer, 1979.




\bibitem{LAG}
T.A.~Springer.
\newblock {\em Linear algebraic groups} 2nd edition.
\newblock  Modern Birkh\"auser Classics, Boston 2009.

\bibitem{Tits:genquadforms}
J.~Tits.
\newblock Formes quadratiques, groups orthogonaux et alg\`ebres de {C}lifford.
\newblock {\em Invent. Math.}, 5:19--41, 1968.
%

%
%
\bibitem{Tsuk}
T.~Tsukamoto.
\newblock On the local theory of quaternionic anti-hermitian forms.
\newblock {\em J. Math. Soc. Japan}, 387--400, 1961.


%%
%%
%\bibitem{wall:classher}
%C.T.C.~Wall.
%\newblock On the classification of hermitian forms. I. Rings of algebraic integers.
%\newblock {\em Compositio Math.}, 22:425--451, 1970.
%%
%
%
%\bibitem{witt:ring}
%E.~Witt.
%\newblock Theorie der quadratischen formen in beliebigen k{\"o}rpern.
%\newblock {\em J. reine angew. Math.}, 176:31--44, 1936.
%
%


\end{thebibliography}
\end{document}